\documentclass{article}

\usepackage{amsthm}
\usepackage{amsmath}
\usepackage{amsfonts}
\usepackage{mathtools}
\usepackage{stix}
\usepackage{graphicx}
\usepackage{color}
\usepackage{stmaryrd}
\usepackage{appendix}
\usepackage{verbatim}
\usepackage{bbm}
\usepackage{tikz-cd}
\usetikzlibrary{arrows.meta}
\usepackage[pdftex,color]{changebar}
\usepackage{thmtools}
\usepackage{thm-restate}

\usepackage{soul}
\newcommand{\rfname}[1]{{#1}}

\DeclareMathOperator*{\argmin}{arg\,min}

\declaretheorem[name=Theorem,numberwithin=section]{theorem}
\declaretheorem[name=Lemma,numberwithin=section]{lemma}
\declaretheorem[name=Corollary,numberwithin=section]{corollary}

\newtheorem{definition}[theorem]{Definition}
\newtheorem{example}[theorem]{Example}

\newtheorem{remark}[theorem]{Remark}
\newtheorem{proposition}[theorem]{Proposition}

\begin{document}

\DeclareRobustCommand{\Triangle}
{%
\begin{tikzpicture}
\filldraw[white, draw=black] (0,0) circle (0.05);
\filldraw[white, draw=black] (0.1,0.2) circle (0.05);
\filldraw[white, draw=black] (0.2,0) circle (0.05);

\draw (0.02,0.04) -- (0.09,0.155) ;
\draw (0.11,0.155) -- (0.18,0.035) ;
\draw (0.16,0.01) -- (0.04,0.01) ;
\end{tikzpicture}
}

\DeclareRobustCommand{\ltrianglea}
{%
\begin{tikzpicture}
\fill[black] (0,0) circle [radius=0.05];
\fill[black] (0.1,0.2) circle [radius=0.05];
\draw [fill=white] (0.1,0.2)--(0.2,0) circle [radius=0.05] -- (0,0);
\end{tikzpicture}
}

\DeclareRobustCommand{\ltriangleb}
{%
\begin{tikzpicture}
\fill[black] (0.1,0.2) circle [radius=0.05] ;
\fill[black] (0.2,0) circle [radius=0.05] ;
\draw [fill=white] (0.2,0)--(0,0) circle [radius=0.05] -- (0.1,0.2);
\end{tikzpicture}
}

\DeclareRobustCommand{\ltrianglec}
{%
\begin{tikzpicture}
\fill[black] (0.2,0) circle [radius=0.05];
\fill[black] (0,0) circle [radius=0.05];
\draw [fill=white] (0,0)--(0.1,0.2) circle [radius=0.05] -- (0.2,0);
\end{tikzpicture}
}

\DeclareRobustCommand{\ltriangled}
{%
\begin{tikzpicture}
\fill[black] (0,0) circle [radius=0.05];
\fill[black] (0.1,0.2) circle [radius=0.05];
\draw [fill=white] (0.1,0.2)--(0.2,0) circle [radius=0.05] -- (0,0);
\end{tikzpicture}
}

\DeclareRobustCommand{\edgetriangle}
{%
W^*([\begin{tikzpicture}
   \draw (0,0) [fill=white] circle [radius=0.05] --(0,0.2) [fill=white] circle [radius=0.05]    ;  
   \end{tikzpicture},\begin{tikzpicture}
    \draw (0,0) [fill=white] circle [radius=0.05] --(0.1,0.2) [fill=white] circle [radius=0.05] --  (0.2,0) [fill=white] circle [radius=0.05] -- (0,0) ;      
    \end{tikzpicture}],[\rho, \tau])
}

\DeclareRobustCommand{\feasibleregionmedgetriangle}
{%
S^{(2)}([\begin{tikzpicture}
   \draw (0,0) [fill=white] circle [radius=0.05] --(0,0.2) [fill=white] circle [radius=0.05]    ;  
   \end{tikzpicture},\begin{tikzpicture}
    \draw (0,0) [fill=white] circle [radius=0.05] --(0.1,0.2) [fill=white] circle [radius=0.05] --  (0.2,0) [fill=white] circle [radius=0.05] -- (0,0) ;      
    \end{tikzpicture}],[\rho, \tau])
}

\DeclareRobustCommand{\feasibleregionedgetriangle}
{%
S_\mathbb{R}([\begin{tikzpicture}
   \draw (0,0) [fill=white] circle [radius=0.05] --(0,0.2) [fill=white] circle [radius=0.05]    ;  
   \end{tikzpicture},\begin{tikzpicture}
    \draw (0,0) [fill=white] circle [radius=0.05] --(0.1,0.2) [fill=white] circle [radius=0.05] --  (0.2,0) [fill=white] circle [radius=0.05] -- (0,0) ;      
    \end{tikzpicture}],[\rho, \tau])
}

\DeclareRobustCommand{\Edge}
{%
\begin{tikzpicture}
\filldraw[white, draw=black] (0,0.0) circle (0.05);
\filldraw[white, draw=black] (0,0.2) circle (0.05);

\draw[thick] (0,0.06) -- (0,0.16);
\end{tikzpicture}
}

\DeclareRobustCommand{\ErdosRenyiedgetriangle}
{%
W^*([\begin{tikzpicture}
   \draw (0,0) [fill=white] circle [radius=0.05] --(0,0.2) [fill=white] circle [radius=0.05]    ;  
   \end{tikzpicture},\begin{tikzpicture}
    \draw (0,0) [fill=white] circle [radius=0.05] --(0.1,0.2) [fill=white] circle [radius=0.05] --  (0.2,0) [fill=white] circle [radius=0.05] -- (0,0) ;      
    \end{tikzpicture}],[\rho, \tau])
}

\numberwithin{equation}{section}

\newcommand{\abs}[1]{\lvert#1\rvert}

\newcommand{\blankbox}[2]{%
  \parbox{\columnwidth}{\centering
    \setlength{\fboxsep}{0pt}%
    \fbox{\raisebox{0pt}[#2]{\hspace{#1}}}%
  }%
}

\newtheorem{problem}{Problem}
\newtheorem{Remark}{Remark}
\newtheorem{algorithm}{Algorithm}

\newcommand{\sfmat}{A}
\newcommand{\goodvalues}{\Omega^{(r)}(\mathcal{F},h)}
\newcommand{\goodvaluesm}[1]{\Omega^{(#1,r)}(\mathcal{F},h)}
\newcommand{\cristats}{\hbox{Cri}^{(m,r)}(\mathcal{F},h)}

\title{Constrained Multi-Relational Graphons with Maximum Entropy}

\newcommand{\jan}[1]{\textcolor{blue}{{#1}}}
\newcommand{\juan}[1]{\textcolor{red}{{#1}}}
\newcommand{\phdExt}[1]{\textcolor{yellow}{\textit{[}}{#1}\textcolor{yellow}{\textit{]}}}
\newcommand{\janfoot}[1]{\textcolor{blue}{{\footnote{\jan{#1}}}}}
\newcommand{\juanfoot}[1]{\textcolor{red}{{\footnote{\juan{#1}}}}}
\newcommand{\janmargin}[1]{\marginpar{\jan{{#1}}}}
\newcommand{\juanmargin}[1]{\marginpar{\juan{{#1}}}}

\newcommand{\graphonspaceu}{\widetilde{\mathcal{W}}^{(r)} }
\newcommand{\graphonspaceuu}[2]{\widetilde{\mathcal{W}}^{(#1,#2)} }
\newcommand{\graphonspacel}{\mathcal{W}^{(r)} } 
\newcommand{\graphonspacell}[2]{\mathcal{W}^{(#1,#2)} } 
\newcommand{\realgraphonspaceu}{\widetilde{\mathcal{W}}_{\mathbb{R}}^{(r)} }
\newcommand{\realgraphonspacel}{\mathcal{W}_{\mathbb{R}}^{(r)} } 
\newcommand{\realgraphonspaceld}{\mathcal{W}_{\mathbb{R}}^{(r)} } 
\newcommand{\realpodalfunctionspacel}[1]{\mathcal{W}^{({#1,r})}_{\mathbb{R}}}
\newcommand{\realpodalfunctionspaceu}[1]{{\widebridgeabove{\mathcal{W}}}^{({#1,r})}_{\mathbb{R}}}
\newcommand{\realpodalfunctionspaceupi}[2]{{\widebridgeabove{\mathcal{W}}}^{({#1},r)}_{\mathbb{R}}({#2})}
\newcommand{\realpodalfunctionspacelpi}[2]{{\mathcal{W}}^{({#1},r)}_{\mathbb{R}}(#2)}

\newcommand{\realfeasibleregionl}{S^{(r)}_{\mathbb{R}}(\mathcal{F},u)}
\newcommand{\realfeasibleregionlh}[1]{S^{(r)}_{\mathbb{R}}(\mathcal{F},u,#1)}
\newcommand{\realfeasibleregionmu}[1]{{{\widebridgeabove{S}}^{(r)}_{\mathbb{R}}^{(#1)}(\mathcal{F},u)}}
\newcommand{\realfeasibleregionml}[1]{{{S}_{\mathbb{R}}^{(#1,r)}(\mathcal{F},u, h)}}
\newcommand{\realfeasibleregionmlh}[2]{{{S}_{\mathbb{R}}^{(#1,r)}(\mathcal{F},u,#2)}}
\newcommand{\feasibleregionmlh}[2]{{{S}^{(#1,r)}(\mathcal{F},u,#2)}}
\newcommand{\realfeasibleregionmlpi}[2]{{S_{\mathbb{R}}^{(#1,r)}(\mathcal{F},u,#2)}}

\newcommand{\eqclass}[1]{[{#1}]_{\sim}}
\newcommand{\convexcombm}[1]{P^{({#1})}}
\newcommand{\convexcombmF}[1]{P^{({#1})}(\mathcal{F})}
\newcommand{\convexcombmFu}[1]{P^{({#1})}(\mathcal{F},u)}

\newcommand{\lebesgue}{\nu}  
\newcommand{\partialF}{\partial F}

\newcommand{\criticalpoints}[1]{Cr^{(#1)}(\mathcal{F},f_d, h)}
\newcommand{\criticalpointspi}[2]{Cr^{(#1)}(\mathcal{F},#2,f_d)}
\newcommand{\criticalpointspiI}[2]{Cr^{(#1)}(\mathcal{F},#2,I)}
\newcommand{\minimalpoints}[1]{Min^{(#1)}(\mathcal{F},f_d, h)}

\newcommand{\podalfunctionspacel}[1]{{\mathcal{W}}^{({#1,r})}}
\newcommand{\podalfunctionspacelpi}[2]{{\mathcal{W}}^{({#1,r})}(#2)}

\newcommand{\marginalpolytopemh}[1]{T^{(#1,r)}(\mathcal{F},h)}
\newcommand{\marginalpolytopeh}{T^{(r)}(\mathcal{F},h)}

\newcommand{\regstats}{\hbox{Reg}^{(r)}(\mathcal{F},h)}
\newcommand{\regstatsm}[1]{\hbox{Reg}^{(#1,r)}(\mathcal{F},h)}

\newcommand{\realfeasibleregionmq}[3]{{S}^{({#1,r})}_{\mathbb{R}}({#2},{#3})}
\newcommand{\crealfeasibleregionmq}[3]{{\overline{S}}^{({#1,r})}_{\mathbb{R}}({#2},{#3})}
\newcommand{\realfeasibleregionmllpi}[4]{{{S}}^{({#1,r})}_{\mathbb{R}}({#2},{#3},{#4})}

\newcommand{\optimalsolution}{W^{(r)}(\mathcal{F},u)}
\newcommand{\realoptimalsolution}[1]{W_{\mathbb{R}}^{(r)}(\mathcal{F},u, #1)}
\newcommand{\realoptimalsolutionm}[2]{W_{\mathbb{R}}^{(#1,r)}(\mathcal{F},u, #2)}

\newcommand{\realoptimalsolutionf}[1]{W_{\mathbb{R}}^{*(r)}(\mathcal{F},u,#1)}
\newcommand{\optimalsolutionm}[1]{W^{(#1,r)}(\mathcal{F},u,h)}
\newcommand{\optimalsolutionmf}[2]{W^{(#1,r)}(\mathcal{F},u,#2)}

\newcommand{\feasibleregion}{\widetilde{S}^{(r)}(\mathcal{F},u)}
\newcommand{\realfeasibleregionu}{\widetilde{S}_{\mathbb{R}}(\mathcal{F},u)}
\newcommand{\feasibleregionml}[1]{{{S}^{(#1,r)}(\mathcal{F},u,h)}}
\newcommand{\randfeasibleregionml}[1]{{{S}_{(0,1)}^{(#1)}(\mathcal{F},u,h)}}

\author{Juan Alvarado\footnote{jalvarad@espol.edu.ec } \thanks{Faculty of Natural Sciences and Mathematics, Escuela
Superior Politécnica del Litoral (ESPOL), KU Leuven, Department of Computer Science}  \and  Jan Ramon\footnote{jan.ramon@inria.fr} \thanks{INRIA Lille}  \and Yuyi Wang \footnote{yuyiwang920@gmail.com} \thanks{CRRC Zhuzhou Institute.}  }

\date{}

\maketitle

\begin{abstract}
The principle of maximum entropy provides a fundamental framework for characterizing typical structures of large random networks subject to observable constraints. In their pioneering numerical experiments \cite{radin2014asymptotics}, Radin, Ren, and Sadun conjectured that entropy-maximizing graphons satisfying subgraph density constraints are stochastic block models ”a conjecture we term the RRS conjecture. While several special cases have been proven for single-relation graphs with specific constraint families, the general problem has remained open, particularly for multi-relational networks.

We resolve the RRS conjecture for constrained multi-relational graphons in the non-extremal regime, proving that entropy-maximizing solutions are step functions with finitely many blocks under the condition the subgraph density constraints are analytically independent and for almost all feasible combinations of sufficient statistics. 
Our proof employs a differential geometric technique to study solutions of constrained optimization problems in function space via functions with a finite parametrization (step functions). The two cornerstones of this work are: the generalization of subgraph density notion to $h$-subgraph density; the proof that manifolds that define the constrained region for the solutions maintain topological stability without developing new connected components under refinement. Together, these enable proving that a solution of this optimization problem is a stepfunction. Moreover we provide a checkable second-order sufficient condition guaranteeing that a step-function critical point is an isolated local minimum of the entropy functional in the full, infinite-dimensional graphon space, established simultaneously in the $L^1$ and cut-norm topologies. Thus 
if all solutions are isolated step functions then all solutions of this optimization problem are step functions.  

\end{abstract}

\noindent {\small  keywords: Graphon Theory and Principle of Maximum Entropy and Large Random Graph and Constrained optimization and Differential Geometry. and Statistical Relational Learning }

\DeclareRobustCommand{\gedgetriangle}
{%
[\begin{tikzpicture}
   \draw (0,0) [fill=white] circle [radius=0.05] --(0,0.2) [fill=white] circle [radius=0.05]    ;  
   \end{tikzpicture},\begin{tikzpicture}
    \draw (0,0) [fill=white] circle [radius=0.05] --(0.1,0.2) [fill=white] circle [radius=0.05] --  (0.2,0) [fill=white] circle [radius=0.05] -- (0,0) ;      
    \end{tikzpicture}]
}

\DeclareRobustCommand{\edgetriangle}
{%
W^*([\begin{tikzpicture}
   \draw (0,0) [fill=white] circle [radius=0.05] --(0,0.2) [fill=white] circle [radius=0.05]    ;  
   \end{tikzpicture},\begin{tikzpicture}
    \draw (0,0) [fill=white] circle [radius=0.05] --(0.1,0.2) [fill=white] circle [radius=0.05] --  (0.2,0) [fill=white] circle [radius=0.05] -- (0,0) ;      
    \end{tikzpicture}],[\rho, \tau])
}

\section{Introduction}
\label{sec:intro}

\subsection{Background and Motivation}

Large-scale networks now sit at the center of modern data science, underlying social platforms, biological systems, knowledge graphs, and cyber-physical infrastructure. A question that recurs across every one of these domains is deceptively simple to state: subject to a handful of observable constraints, what does a \emph{typical} large network actually look like? Framed this way, the question is a statistical-physics question, and the maximum entropy principle answers it cleanly --- among all configurations compatible with the observed constraints, the most probable one is the one that maximizes entropy.

Graphon theory supplies the right continuum language for making this precise. Introduced by Lov\'asz and Szegedy, it represents the limit of a convergent graph sequence as a single measurable function $W: [0,1]^2 \to [0,1]$, replacing a combinatorial object that grows without bound by an analytic object that can be differentiated, optimized, and classified. This shift from combinatorics to analysis is what makes the maximum entropy question tractable, and it has since made graphon theory the standard tool for analyzing network formation \cite{parise2023graphon, bramburger2023pattern}, network estimation \cite{gao2015rate, su2020network, pensky2019dynamic}, and statistical inference on networks \cite{chatterjee2024higher, dufour2024inference, bickel2009nonparametric}.

\subsubsection{The Fundamental Question: Maximum Entropy Characterization}

Stripped to its essence, the question we study is: given constraints on subgraph densities in a large random multi-relational network, what structural form does the ``most typical'' such network take?

The maximum entropy principle turns this into an optimization problem: among all networks satisfying the given constraints, the most probable configuration is the one maximizing entropy. In the graphon framework, this is
\begin{equation}
\label{eq:mainproblem}
W^*(\mathcal{F}, u) = \arg\max_{W \in S(\mathcal{F}, u)} -I(W)
\end{equation}
where $S(\mathcal{F}, u)$ is the constraint set defined by subgraph densities $\mathcal{F}$ taking values $u$, and $-I(W)$ is the entropy functional.

Radin, Ren, and Sadun \cite{radin2014asymptotics} conjectured, on the strength of numerical experiments, that the solutions are always step functions --- equivalently, stochastic block models, multipodal functions, or echelon functions: a finite-rank structure built from finitely many community types. We call this the RRS conjecture, and a positive answer is 
more than a structural curiosity: it collapses an infinite-dimensional functional optimization to a finite-dimensional parametric one, supplies a theoretical reason why matrix factorization methods work so well on real network data \cite{thibeault2024low}, and connects naturally to phase transitions in statistical physics models of network formation.

\subsubsection{The multi-relational case}

Real networks, however, rarely carry a single type of relation. Knowledge graphs such as Wikidata \cite{vrandevcic2014wikidata}, YAGO \cite{suchanek2007yago, mahdisoltani2015yago3}, and DBpedia \cite{lehmann2015dbpedia} encode entities connected by many relationship types simultaneously, supporting tasks from question answering to information retrieval \cite{schlichtkrull2018modeling, ringler2017one}. Social networks layer friendship, collaboration, communication, and influence on the same set of actors \cite{jackson2017economic, szell2010multirelational}; biological networks simultaneously encode protein-protein interactions, gene regulation, and metabolic pathways \cite{safari2014protein, valdeolivas2019random, menche2015disease, kumar2022emerging}; and enterprise systems combine organizational structure, information flows, and resource dependencies in a single graph \cite{boccaletti2014structure}. 

Despite their prevalence, the theoretical understanding of typical constrained multi-relational networks has remained
limited, with most prior work focusing on single-relation graphs.

A fundamental motivation for proving the RRS conjecture comes from \emph{probabilistic inference} in first-order relational logic and, specifically, from the desire for an alternative to \emph{Markov Logic Networks} (MLN) \cite{richardson2006markov}. In the MLN framework, a probabilistic relational theory over a vocabulary $\Sigma = \{R_1, \ldots, R_r\}$ of symmetric binary predicates is a finite collection $\{(\phi_i, w_i)\}_{i=1}^{k}$, where each $\phi_i$ is a symmetric first-order relational sentence and $w_i \in \mathbb{R}$ is a real \emph{weight}. These weights are the natural parameters of the Gibbs distribution
\[
  \Pr_{\mathrm{MLN}}(G) \propto \exp\!\Bigl(\sum_{i=1}^k w_i \, t(\phi_i, G)\Bigr),
\]
where $t(\phi_i, G)$ is the subgraph density of $\phi_i$ in the network $G$; the subgraph densities serve as the sufficient statistics of this exponential family.

Our approach replaces MLN weights by \emph{target probabilities}: a probabilistic relational theory is instead a collection $T = \{(\phi_i, p_i)\}$ where $p_i \in [0,1]$ is the observed empirical density of $\phi_i$. The condition $t(\phi_i, W) = p_i$ identifies each $p_i$ with a subgraph density constraint, so $T$ determines a constraint pair $(\mathcal{F}, u)$ in the sense of this paper. The maximum-entropy distribution consistent with $T$ is realized by the graphon $W^*(\mathcal{F}, u)$, with the Lagrange multipliers of the constraints playing the role of the MLN weights. Probabilistic inference then amounts to computing
\[
  \Pr(\phi \mid T) = t\!\bigl(\phi,\, W^*(\mathcal{F}, u)\bigr)
\]
for a query formula $\phi$. If the RRS conjecture holds, assuming the solution is unique, $W^*(\mathcal{F}, u)$ is a step function with finitely many blocks, so $\Pr(\phi \mid T)$ reduces to a polynomial in the block parameters of $W^*(\mathcal{F}, u)$, making probabilistic inference in principle computationally tractable provided that the optimization $W^*(\mathcal{F}, u)$ is tractable.

Beyond its combinatorial motivation, the RRS conjecture connects to a much older question in condensed matter physics. Landau \cite{landau2013course} argued, on symmetry grounds, that an ordered phase of matter cannot be connected to a disordered phase by a smooth thermodynamic path — there can be no critical point analogous to the liquid-vapor one for the solid-fluid transition — yet this argument has remained without rigorous foundation for any statistical mechanics model with short-range forces \cite{uhlenbeck1968fundamental}. Neeman, Radin, and Sadun \cite{radin2024emergence} recently made this argument precise in the graphon setting, exhibiting an explicit order parameter that distinguishes a symmetric phase from its neighbors in the edge-triangle model, and explicitly framing their result as a rigorous instance of Landau's intuition. Our resolution of the RRS conjecture in general — that entropy-maximizing graphons are step functions, i.e. exhibit emergent block (community) structure — is the structural mechanism underlying that phenomenon: it is precisely the spontaneous emergence of such block structure, without it being imposed by the model, that plays the role of crystalline order in this combinatorial analogue of condensed matter.

\subsection{Related Work and Prior Progress}

Graphon theory itself was developed by Lov\'asz and Szegedy \cite{lovasz2006limits, lovasz2011finitely}, with foundational contributions on dense graph limits, homomorphism densities, and convergence theory; \cite{kolaczyk2020statistical, gao2015rate, klopp2017oracle} give comprehensive treatments of estimation and inference on graphon models. The extension to multi-relational graphons --- the setting of this paper --- was only recently established, in \cite{alvarado2022limits}, which supplies the theoretical foundation we build on here. The maximum entropy principle for networks has also been studied extensively through exponential random graph models (ERGMs) \cite{frank1986markov, holland1983stochastic}; ERGMs, however, target finite networks under global constraints, while graphon theory addresses the asymptotic limit, with the two frameworks connected through large deviation principles established by Chatterjee and Varadhan.

On the RRS conjecture itself, progress so far has been confined to single-relation graphs and to specific constraint families. Kenyon et al.\ \cite{kenyon2017multipodal} proved it for edge and $k$-star constraints, exhibiting bipodal and multipodal structures, while \cite{kenyon2016bipodal} established bipodal structure for edge-triangle constraints once triangle density slightly exceeds the saturation threshold; the number of blocks can grow unboundedly as the constraint vector approaches extremal values \cite{kenyon2017phases}. More recently, Neeman, Radin, and Sadun \cite{neeman2024existence} proved the existence of a symmetric bipodal phase in the edge-triangle model --- the unique entropy-maximizing graphon is symmetric bipodal for edge density near $\frac{1}{2}$ and triangle density below $\frac{1}{8}$ --- and, most recently, Radin and Sadun \cite{radin2024emergence} showed that entropy-optimal graphons in the near-extremal regime of edge and triangle constraints are unique and multipodal, mapping out infinitely many phases and the transitions between them.

Every one of these results, however, is tied to a specific, hand-analyzed constraint family on simple graphs, and to non-extremal regions satisfying ad hoc separation conditions; no general framework existed for multi-relational networks or for arbitrary subgraph constraints, and the case-by-case arguments used so far do not obviously generalize. Part of the difficulty is structural: once a constraint graph has two edges sharing a vertex --- a triangle being the simplest example --- the relevant Hessian of the entropy functional stops being a pointwise (diagonal) operator, and the usual per-block isolation arguments break down exactly where the interesting phase transitions occur. Resolving the conjecture in general therefore calls for tools that remain valid in this coupled, non-diagonal regime, rather than for sharper case analysis of any one constraint family. Recent empirical work by Thibeault et al.\ \cite{thibeault2024low} adds urgency to this gap: complex networks across biological, social, and technological domains exhibit surprisingly effective low-rank approximations, a phenomenon our results place on a maximum entropy foundation.

\subsection{Our Contributions}

This work resolves the RRS conjecture in complete generality for multi-relational graphons subject to regularity conditions: for multi-relational graphs with subgraph density constraints $\mathcal{F}$ and values $u$ in the non-extremal region, the entropy-maximizing graphons $W^*(\mathcal{F}, u)$ are step functions (graphons that are constant on each block of an $m$-block partition), provided the constraints satisfy regularity conditions.

Our argument rests on the development a differential geometric framework for analyzing entropy functionals on finite dimensional regions $\realfeasibleregionml{m}$ defined by $h$-subgraph density constraints that generalize subgraph density constraints and where each point is a parametric representation of a step function of size $m$. 
The constrained regions $\realfeasibleregionml{m}$ are finite finite dimensional representation of $S(\mathcal{F},u)$ but with values on $\mathbb{R}$ instead of  values on $[0,1]$. 
We ensure the constrained regions are smooth manifolds for almost combinations of sufficient statistics $u$ by assuming the constraints are analytically independent. The step functions in these manifolds can be seen as points of a topological space either with a Euclidean topology or a function topology. 

Hence we prove that when $m$, the size of the parametric representation, is sufficiently large and $m$ is increased to $m+1$, the constraint manifolds do not develop new connected components (Theorem~\ref{thm:numbercomponents}). 
This result and the generalization of the entropy functional $-I$ as another $h$-subgraph density are sufficient to rule out new global optima in higher-dimensional spaces (Corollary \ref{cor:nonessentiallocal}). Thus the proof of RRS conjecture reduces the infinite-dimensional variational problem (\ref{eq:mainproblem}) to optimizing $I$ over the finite-dimensional manifold $\realfeasibleregionml{m}$ when $m$ is sufficiently large.

A further technical contribution closes a gap left open by this reduction: ruling out new global optima among \emph{other step functions} of growing size is not enough on its own, since a finite-dimensional reduction only proves the global theorem once the candidate solution is also known to be isolated against \emph{every} nearby graphon, step or not. We give a checkable, second-order sufficient condition for this (Theorem~\ref{thm:margincriterion}), built on an exact decoupling of the Lagrangian Hessian into a finite-dimensional ``in-manifold'' part and an infinite-dimensional ``off-manifold'' part (Lemma~\ref{lem:exactdecoupling}). The off-manifold part is controlled by a Cauchy--Schwarz bound that remains valid even when the constraint subgraphs have edges sharing a vertex (e.g.\ triangles, central to the RRS conjecture), where the relevant Hessian is not a pointwise/diagonal operator and naive estimates fail. We establish isolation simultaneously in the $L^1$ topology (Theorem~\ref{thm:isolatedgraphon}) and, under a global strong-convexity hypothesis satisfied by the entropy rate function (Remark~\ref{rem:I0stronglyconvex}), in the strictly weaker cut-norm topology (Theorem~\ref{thm:isolatedcutnorm}); we further show this isolation transfers to the unlabeled graphon space and its cut-distance topology (Corollary~\ref{cor:isolatedunlabeled}) -- precisely the form needed to complete the proof that entropy-maximizing multi-relational graphons are step functions (Theorem~\ref{thm:radin}).
  


\subsection{Organization}

The rest of the paper follows the logic of the argument rather than a strict topical order. Section~\ref{sec:mathematicalbacground} collects the \rfname{differential geometry} and constrained-optimization tools the proofs lean on, so that later sections can cite them rather than re-derive them. Section~\ref{sec:multirelationalgraphons} reviews the multi-relational graphon framework of \cite{alvarado2022limits}, and Section~\ref{sec:keyresultsgraphons} recalls how the most typical random multi-relational graph arises as the solution of an entropy-maximization problem; there we also generalize subgraph density to \emph{$h$-subgraph density}, modulated by an analytic function $h$, which lets the entropy rate function itself appear as a special case rather than as a separate object to be handled twice.

Section~\ref{sec:stepfunctionspace} develops the geometry of step functions inside the space of multi-relational graphons: the refinement operator and its inverse, the mismatch between the Euclidean and function topologies on this space, and the partial-derivative formulas for subgraph densities that everything afterward depends on.

Section~\ref{sec:computability} is where these pieces come together to prove that entropy-maximizing constrained multi-relational graphons are step functions, under the stated regularity conditions. It splits into a local and a global analysis. The local analysis supplies a checkable, second-order criterion for when a step function that is a local minimizer is genuinely \emph{isolated} -- in either the $L^1$ or the strictly weaker cut-norm topology. The global analysis then shows that once the step-function representation is large enough, enlarging it further never creates a new, better global solution. Section~\ref{sec:conclusions} closes with concluding remarks and open problems.

\section{Mathematical Background}
\label{sec:mathematicalbacground}

This section develops the mathematical foundations required for our analysis of constrained multi-relational graphons with maximum entropy. We present tools from Riemannian geometry, constrained optimization theory, and differential topology that enable us to study the structure of optimal solutions in finite-dimensional step function spaces. While the main application concerns graphon theory and random graph limits, the techniques we develop have broader applicability to constrained optimization problems on manifolds.

Readers familiar with constrained optimization on smooth manifolds may proceed directly to Section~\ref{sec:multirelationalgraphons}. For completeness, we provide here the essential results that underpin our proofs.

\subsection{Notation and Conventions}
\label{sec:prelim}

Throughout this paper, we adopt the following notational conventions:
\begin{itemize}
    \item $[n] = \{1, 2, \ldots, n\}$ denotes the set of positive integers up to $n$
    \item $\mathbbm{1}_X$ denotes the indicator function, i.e., for any $x$, if $x\in X$ then $\mathbbm{1}_X(x)=1$ else $\mathbbm{1}_X(x)=0$
    \item $\nu$ denotes the Lebesgue measure on $\mathbb{R}$
    \item $\Sigma$ denotes the collection of all bijective measure-preserving maps on $[0,1]$
    \item For a set $A$, we write $A^\circ$ for its interior, $\partial A$ for its boundary, and $|A|$ for its cardinality (when finite)
    \item A multi-relational graph $F$ is defined by $(V,E_1, \cdots, E_r)$ where $V$ is the set of vertices and $(E_1, \cdots, E_r)$ are the set of relations. The number of vertices is denoted $|F|$
    \item $\delta_{i,j}$ denotes the Kronecker delta: $\delta_{i,j} = 1$ if $i=j$ and $0$ otherwise    
    \item For a smooth function $F: \mathbb{R}^n \to \mathbb{R}^m$, $J_x(F)$ denotes its Jacobian matrix at $x$
    \item For a smooth function $f: \mathbb{R}^n \to \mathbb{R}$, $H_x f$ denotes its Hessian matrix at $x$
    \item For a smooth manifold $M$, $T_x M$ denotes the tangent space at $x \in M$
    \item $d\phi_x$ denotes the differential at $x$ of a local diffeomorphism $\phi_x:U \to N$  defined on neighborhood of $U \subset M$.
    \item $C^k(\mathbb{R})$, $k\in\mathbb{N}\cup\{\infty\}$, denotes the class of $k$ times continuously differentiable real functions.
\end{itemize}

\subsection{Riemannian Geometry Preliminaries}

We require several concepts from Riemannian geometry to analyze the structure of constraint manifolds and the behavior of optimization problems on these manifolds.

\subsubsection{Riemannian Manifolds and Metrics}

A \emph{Riemannian metric} on a smooth manifold $M$ is a smooth assignment of an inner product $g_p: T_p M \times T_p M \to \mathbb{R}$ to each tangent space $T_p M$, where $g_p$ is positive definite and bilinear. This structure enables us to measure lengths of curves and angles between tangent vectors.

\begin{definition}[Riemannian Metric]
\label{def:riemannian_metric}
Let $M$ be a smooth $n$-dimensional manifold. A \emph{Riemannian metric} $g$ on $M$ is a smooth tensor field that assigns to each point $p \in M$ a positive definite symmetric bilinear form $g_p: T_p M \times T_p M \to \mathbb{R}$.
\end{definition}

Given a Riemannian metric $g$, the length of a smooth curve $\gamma: [a,b] \to M$ is defined by
\begin{equation}
\label{eq:curve_length}
L(\gamma) = \int_a^b \sqrt{g_{\gamma(t)}(\gamma'(t), \gamma'(t))} \, dt.
\end{equation}

In local coordinates $(x^1, \ldots, x^n)$ on a chart, if $g_{ij}(p)$ denote the components of the metric tensor, then
\begin{equation}
L(\gamma) = \int_a^b \sqrt{\sum_{i,j=1}^n g_{ij}(\gamma(t)) \frac{dx^i}{dt} \frac{dx^j}{dt}} \, dt.
\end{equation}

\subsubsection{Geodesics and the Exponential Map}

Geodesics are curves that locally minimize distance and generalize the notion of straight lines in Euclidean space.

\begin{definition}[Definition $1.4.2$ in \cite{jost2008riemannian}]
Let $(M,g)$ be a Riemannian manifold. A smooth curve $\gamma: [0, a] \to M$ is called  a geodesic if it satisfies the second-order differential equation:
\begin{equation}
\label{geodesicequation}
\frac{d^2\gamma^{(i)}}{ds^2} + \Gamma^{i}{j,k} \frac{d\gamma^{(j)}}{ds} \frac{d\gamma^{(k)}}{ds} = 0, \quad i, j, k = 1, \ldots, m,
\end{equation}
where $\gamma^{(i)}$ denotes the $i$-th coordinate of $\gamma$ and $\Gamma^i_{jk}$ are the Christoffel symbols of the Levi-Civita connection.
\end{definition}

The Christoffel symbols are defined in terms of the metric tensor by
\begin{equation}
\Gamma^i_{jk} = \frac{1}{2} \sum_{\ell} g^{i\ell} \left( \frac{\partial g_{j\ell}}{\partial x^k} + \frac{\partial g_{k\ell}}{\partial x^j} - \frac{\partial g_{jk}}{\partial x^\ell} \right),
\end{equation}
where $g^{i\ell}$ denotes the components of the inverse metric tensor.

A fundamental tool in Riemannian geometry is the exponential map, which relates geodesics to the tangent space structure.

\begin{definition}[Exponential Map]
\label{def:exponential_map}
Let $(M,g)$ be a Riemannian manifold and $p \in M$. For each tangent vector $v \in T_p M$, let $\gamma_{p,v}$ be the unique geodesic with $\gamma_{p,v}(0) = p$ and $\gamma'_{p,v}(0) = v$. The \emph{exponential map} at $p$ is defined by
\begin{equation}
\mathrm{Exp}_p: U \subset T_p M \to M, \quad \mathrm{Exp}_p(v) = \gamma_{p,v}(1),
\end{equation}
where $U$ is an open neighborhood of $0 \in T_p M$ on which the geodesics are defined.
\end{definition}

A crucial property, as established by Proposition $20.8$ in \cite{lee2003smooth}, is that $Exp_p$ is a local diffeomorphism when restricted to an open neighborhood of $0 \in T_p M$. This property ensures the existence of a well-defined inverse map locally.

\begin{proposition}[Proposition $20.8$ in \cite{lee2003smooth}]
\label{prop:exp_local_diffeo}
For each $p \in M$, there exists an open neighborhood $U$ of $0 \in T_p M$ such that $\mathrm{Exp}_p: U \to \mathrm{Exp}_p(U)$ is a diffeomorphism.
\end{proposition}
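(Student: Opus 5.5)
The plan is to derive the statement from the inverse function theorem applied to $\mathrm{Exp}_p$ at the origin of $T_pM$. Once we know that $\mathrm{Exp}_p$ is smooth on a neighborhood of $0$ and that its differential at $0$ is invertible, the inverse function theorem supplies open neighborhoods $U \ni 0$ and $\mathrm{Exp}_p(U) \ni p$ on which $\mathrm{Exp}_p$ is a diffeomorphism.

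\textbf{Step 1: smoothness.} The geodesic equation (\ref{geodesicequation}) is a second-order ODE with smooth coefficients $\Gamma^i_{jk}$. Rewriting it as a first-order system on the tangent bundle, standard ODE theory gives:
\begin{itemize}
\item existence and uniqueness of solutions;
\item smooth dependence on the initial data $(p,v)$;
\item a flow domain that is open.
\end{itemize}
Next we use the rescaling property $\gamma_{p,cv}(t)=\gamma_{p,v}(ct)$. This holds because $t\mapsto\gamma_{p,v}(ct)$ solves the same equation with initial velocity $cv$, and uniqueness then identifies the two curves. From rescaling, the set of $v$ for which $\gamma_{p,v}$ is defined up to time $1$ is an open neighborhood of $0$ that is star-shaped. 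So $\mathrm{Exp}_p$ is defined and smooth there.

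\textbf{Step 2: the differential at $0$.} For $v\in T_pM\cong T_0(T_pM)$, consider the curve $t\mapsto tv$ in $T_pM$. By rescaling, $\mathrm{Exp}_p(tv)=\gamma_{p,tv}(1)=\gamma_{p,v}(t)$. Differentiating at $t=0$ gives
\[
d(\mathrm{Exp}_p)_0(v)=\gamma_{p,v}'(0)=v.
\]
Hence $d(\mathrm{Exp}_p)_0$ is the identity, which is in particular invertible.

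\textbf{Step 3: conclusion.} The inverse function theorem now yields an open neighborhood $U$ of $0$, which we may shrink so that it lies inside the domain from Step 1. On this $U$, the image $\mathrm{Exp}_p(U)$ is open and $\mathrm{Exp}_p:U\to \mathrm{Exp}_p(U)$ is a smooth bijection with smooth inverse.

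The main obstacle is Step 1, specifically the smooth dependence of the ODE flow on initial conditions and the openness of the domain. For both I would cite the standard fundamental theorem on flows rather than reprove them. The rest is a direct computation.
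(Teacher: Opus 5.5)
Your proof is correct and is the standard argument. The paper does not prove the statement itself but cites Lee's Proposition 20.8, whose proof has exactly your structure: the differential of the exponential map at $0$ is the identity, and the inverse function theorem finishes. The one point worth noting is that Lee's Proposition 20.8 appears to be the Lie-group version of this fact, so your Steps 1--2 (the open, star-shaped domain from the flow theorem, and the rescaling $\gamma_{p,cv}(t)=\gamma_{p,v}(ct)$) are what carry the argument over to the Riemannian $\mathrm{Exp}_p$ that the paper actually defines.
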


\subsection{Constrained Optimization on Manifolds}

We now develop the theory of constrained optimization that forms the analytical core of our approach. Consider a smooth function $f: \mathbb{R}^n \to \mathbb{R}$ and constraint functions $h: \mathbb{R}^n \to \mathbb{R}^k$. Define the constraint set
\begin{equation}
\label{eq:constraint_set}
M = \{x \in \mathbb{R}^n : h(x) = 0\}.
\end{equation}

Under regularity conditions (the Jacobian $J_x(h)$ has full rank for all $x \in M$), the constraint set $M$ forms a smooth $(n-k)$-dimensional submanifold of $\mathbb{R}^n$.

\subsubsection{Lagrangian and First-Order Optimality Conditions}

\begin{definition}[Lagrangian Function]
\label{def:lagrangian}
The \emph{Lagrangian} of $f$ subject to the constraint $h(x) = 0$ is defined as
\begin{equation}
\mathcal{L}(x, \mu) = f(x) - \mu^\top h(x),
\end{equation}
where $\mu \in \mathbb{R}^k$ is the vector of Lagrange multipliers.
\end{definition}

\begin{theorem}[First-Order Necessary Conditions (KKT) (Theorem 12.1 in \cite{jorge2006numerical})]
\label{thm:first_order_KKT}
If $x^* \in M$ is a local minimum of $f$ on $M$, then there exists a unique vector $\mu^* \in \mathbb{R}^k$ of Lagrange multipliers such that
\begin{equation}
\label{eq:KKT_first_order}
\nabla_x \mathcal{L}(x^*, \mu^*) = 0,
\end{equation}
which is equivalent to
\begin{equation}
\nabla f(x^*) = (J_{x^*}(h))^\top \mu^*.
\end{equation}
\end{theorem}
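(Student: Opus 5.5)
The plan is the standard tangent-cone argument for Theorem~\ref{thm:first_order_KKT}, under the standing regularity assumption that $J_{x^*}(h)$ has full row rank $k$, so that $M$ is a smooth $(n-k)$-dimensional submanifold near $x^*$. The first step is to identify the tangent space. I will show that $T_{x^*}M=\ker J_{x^*}(h)$. If $\gamma$ is any smooth curve in $M$ with $\gamma(0)=x^*$, then differentiating $h(\gamma(t))=0$ gives $J_{x^*}(h)\gamma'(0)=0$, so $T_{x^*}M\subseteq\ker J_{x^*}(h)$. For the reverse inclusion I use the implicit function theorem. Split coordinates so that $k$ columns of $J_{x^*}(h)$ are invertible; then $M$ is locally a graph over the remaining $n-k$ coordinates. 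Hence every $v\in\ker J_{x^*}(h)$ is realized as $\gamma'(0)$ for a smooth curve $\gamma\subset M$, and comparing dimensions ($n-k$ on both sides) gives equality.

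The second step is first-order stationarity along the manifold. For such a curve $\gamma$, the function $t\mapsto f(\gamma(t))$ has a local minimum at $t=0$, so
\[
0=\tfrac{d}{dt}f(\gamma(t))\big|_{t=0}=\nabla f(x^*)^\top v .
\]
This shows $\nabla f(x^*)\perp\ker J_{x^*}(h)$.

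The third step is linear algebra. Since $(\ker A)^\perp=\mathrm{range}(A^\top)$ for $A=J_{x^*}(h)$, there exists $\mu^*\in\mathbb{R}^k$ with $\nabla f(x^*)=J_{x^*}(h)^\top\mu^*$. This is exactly $\nabla_x\mathcal{L}(x^*,\mu^*)=0$ for the Lagrangian of Definition~\ref{def:lagrangian}. Uniqueness follows because $J_{x^*}(h)^\top$ has full column rank, so it is injective and $\mu^*$ is determined; explicitly,
\[
\mu^*=(JJ^\top)^{-1}J\nabla f(x^*), \qquad J=J_{x^*}(h).
\]

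The main obstacle is the reverse inclusion $\ker J\subseteq T_{x^*}M$, that is, producing feasible curves in every kernel direction. This is exactly where the full-rank constraint qualification is needed. Without it the tangent cone can be strictly smaller than the kernel, and the multiplier can fail to exist. The remaining steps are routine once curves in every kernel direction are available.
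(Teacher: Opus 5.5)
Your proof is correct. You also did well to state the full-rank (LICQ) hypothesis explicitly: the paper's statement leaves it implicit in its standing regularity assumption, yet it is what guarantees both existence and uniqueness of $\mu^*$. The paper gives no proof of its own and only cites Theorem 12.1 of Nocedal--Wright, whose argument (tangent cone equals the linearized feasible directions under LICQ, then Farkas' lemma) reduces for equality-only constraints to exactly your route: the identification $T_{x^*}M=\ker J_{x^*}(h)$ followed by $(\ker J)^\perp=\mathrm{range}(J^\top)$.
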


\subsubsection{Hessian of the Lagrangian}

To characterize the nature of critical points (minima, maxima, or saddle points), we examine second-order information through the Hessian of the Lagrangian restricted to the tangent space of the constraint manifold.

\begin{definition}[Hessian of the Lagrangian]
\label{def:lagrangian_hessian}
Let $x^* \in M$ be a critical point of $f$ on $M$ with Lagrange multipliers $\mu^*$. The \emph{Hessian of the Lagrangian} at $x^*$ is the matrix
\begin{equation}
H^g_{x^*} f := H_{x^*} \mathcal{L}(\cdot, \mu^*) = H_{x^*} f - \sum_{i=1}^k \mu^*_i H_{x^*} h_i,
\end{equation}
where $H_{x^*} f$ is the Hessian of $f$ at $x^*$, and $h_i$ are the components of $h$.
\end{definition}

The superscript $g$ emphasizes that this is the Hessian restricted to the constraint manifold geometry.

\begin{theorem}[Second-Order Necessary Conditions (Theorem $12.5$ in \cite{jorge2006numerical})]
\label{thm:second_order_necessary}
Let $x^* \in M$ be a critical point of $f$ on $M$ with Lagrange multipliers $\mu^*$. If $x^*$ is a local minimum, then
\begin{equation}
\label{eq:second_order_necessary}
v^\top (H^g_{x^*} f) v \geq 0 \quad \text{for all } v \in T_{x^*} M.
\end{equation}
\end{theorem}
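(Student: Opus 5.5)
The plan is the classical argument by restriction to curves. We use the regularity assumption already in force in this subsection: $J_{x^*}(h)$ has full rank $k$, so that $M$ is a smooth $(n-k)$-dimensional submanifold near $x^*$ and $T_{x^*}M=\ker J_{x^*}(h)$. Fix $v\in T_{x^*}M$. The first step is to produce a smooth curve $\gamma:(-\varepsilon,\varepsilon)\to M$ with $\gamma(0)=x^*$ and $\gamma'(0)=v$. The cleanest choice is the geodesic $\gamma(t)=\mathrm{Exp}_{x^*}(tv)$ for the metric induced from $\mathbb{R}^n$; Proposition~\ref{prop:exp_local_diffeo} guarantees it is defined and stays in $M$ for small $|t|$. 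Alternatively one can apply the implicit function theorem to $h(x^*+tv+J_{x^*}(h)^\top w(t))=0$, solving for $w(t)\in\mathbb{R}^k$ with $w(0)=0$ and $w'(0)=0$.

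The second step uses the multipliers. Since $h(\gamma(t))\equiv 0$ on $M$, we have $f(\gamma(t))=\mathcal{L}(\gamma(t),\mu^*)$ for all small $t$. Let $\phi(t)=\mathcal{L}(\gamma(t),\mu^*)$. The chain rule gives
\[
\phi'(t)=\nabla_x\mathcal{L}(\gamma(t),\mu^*)^\top\gamma'(t),
\]
\[
\phi''(0)=v^\top H_{x^*}\mathcal{L}(\cdot,\mu^*)\,v+\nabla_x\mathcal{L}(x^*,\mu^*)^\top\gamma''(0).
\]
By Theorem~\ref{thm:first_order_KKT}, $\nabla_x\mathcal{L}(x^*,\mu^*)=0$. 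This is exactly the point where passing to the Lagrangian pays off: the curvature term $\gamma''(0)$, which depends on the second fundamental form of $M$, disappears. Hence $\phi'(0)=0$ and $\phi''(0)=v^\top(H^g_{x^*}f)v$, by Definition~\ref{def:lagrangian_hessian}.

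The third step is one-variable calculus. Because $x^*$ is a local minimum of $f$ on $M$ and $\gamma$ is continuous into $M$, the point $t=0$ is a local minimum of $\phi$. A $C^2$ function with $\phi'(0)=0$ and a local minimum at $0$ must satisfy $\phi''(0)\ge 0$. Otherwise Taylor's theorem would give $\phi(t)<\phi(0)$ for small $t\neq 0$. This yields $v^\top(H^g_{x^*}f)v\ge 0$, and since $v$ was arbitrary, the statement follows.

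The only step needing care is the first one: showing that every tangent vector is realized by a $C^2$ curve lying in $M$. This is precisely where the full-rank (LICQ) hypothesis is used. Without it, $\ker J_{x^*}(h)$ could be strictly larger than the set of velocities of feasible curves. So I would state explicitly that the standing regularity assumption makes $M$ an embedded submanifold with $T_{x^*}M=\ker J_{x^*}(h)$, and then invoke the implicit function theorem construction above, which is elementary and avoids any completeness issues with $\mathrm{Exp}$.
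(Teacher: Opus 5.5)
Your argument is correct and is essentially the proof behind the result the paper cites. The paper gives no proof of its own and defers to Theorem 12.5 of \cite{jorge2006numerical}, which likewise uses the full-rank (LICQ) hypothesis and the implicit function theorem to produce feasible points $z = x^* + t v + o(t)$ of $M$. It then notes that $\mathcal{L}(\cdot,\mu^*)$ coincides with $f$ there and Taylor-expands, so that $\nabla_x\mathcal{L}(x^*,\mu^*)=0$ removes the linear term and leaves $f(z) - f(x^*) = \tfrac{1}{2}t^2 v^\top (H^g_{x^*} f) v + o(t^2)$, with only the cosmetic difference that the textbook works with a feasible sequence and the critical cone of a general inequality-constrained problem, which for pure equality constraints is exactly your $T_{x^*}M=\ker J_{x^*}(h)$.
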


\begin{theorem}[Second-Order Sufficient Conditions (Theorem $12.6$ in \cite{jorge2006numerical})]
\label{thm:second_order_sufficient}
Let $x^* \in M$ be a critical point of $f$ on $M$ with Lagrange multipliers $\mu^*$. If
\begin{equation}
\label{eq:second_order_sufficient}
v^\top (H^g_{x^*} f) v > 0 \quad \text{for all } v \in T_{x^*} M \setminus \{0\},
\end{equation}
then $x^*$ is a strict local minimum of $f$ on $M$.
\end{theorem}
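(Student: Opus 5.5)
The plan is to reduce the statement to the unconstrained second-order sufficient condition by passing to local coordinates on $M$. Since $J_x(h)$ has full rank on $M$, the implicit function theorem makes $M$ near $x^*$ an embedded $(n-k)$-dimensional submanifold. So there is a smooth local parametrization $\varphi: V\subset\mathbb{R}^{n-k}\to M$ with $\varphi(0)=x^*$, and $d\varphi_0$ is an isomorphism onto $T_{x^*}M=\ker J_{x^*}(h)$. One could equally use $\mathrm{Exp}_{x^*}$ via Proposition~\ref{prop:exp_local_diffeo}; any chart works. Set $g=f\circ\varphi$. Proving that $x^*$ is a strict local minimum of $f$ on $M$ is then the same as proving that $0$ is a strict local minimum of $g$ on $V$.

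First I will compute the derivatives of $g$ at $0$. For the first derivative, $\nabla g(0)=d\varphi_0^\top\nabla f(x^*)=d\varphi_0^\top J_{x^*}(h)^\top\mu^*$ by Theorem~\ref{thm:first_order_KKT}. This vanishes because $J_{x^*}(h)\,d\varphi_0=0$, which follows from differentiating $h\circ\varphi\equiv0$.

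The main obstacle is the second derivative, where the curvature of $M$ contributes. Take $w\in\mathbb{R}^{n-k}$ and $v=d\varphi_0 w$, and differentiate twice along the curve $t\mapsto\varphi(tw)$. This gives
\[
w^\top H_0 g\, w = v^\top H_{x^*}f\, v + \nabla f(x^*)^\top a, \qquad a=\tfrac{d^2}{dt^2}\varphi(tw)\big|_{t=0}.
\]
Differentiating $h_i(\varphi(tw))\equiv0$ twice in the same way gives $v^\top H_{x^*}h_i\, v+\nabla h_i(x^*)^\top a=0$ for each $i$. Now substitute $\nabla f(x^*)=\sum_i\mu_i^*\nabla h_i(x^*)$. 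The acceleration term becomes $-\sum_i\mu^*_i v^\top H_{x^*}h_i v$, so the second-order term is intrinsic:
\[
w^\top H_0 g\,w = v^\top\Big(H_{x^*}f-\sum_i\mu_i^*H_{x^*}h_i\Big)v = v^\top (H^g_{x^*}f)\,v .
\]
Since $d\varphi_0$ is injective, the hypothesis makes $H_0 g$ positive definite.

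To finish, I will Taylor expand $g$ at $0$. With $\lambda>0$ the smallest eigenvalue of $H_0g$, and using continuity of the second derivatives (as $f,h$ are smooth), we get $g(w)\ge g(0)+\tfrac{\lambda}{4}\|w\|^2$ for $\|w\|$ small. Because $\varphi$ is a homeomorphism onto a relatively open neighbourhood $U\cap M$ of $x^*$, this yields $f(x)>f(x^*)$ for every $x\in U\cap M$ with $x\neq x^*$. Hence $x^*$ is a strict local minimum of $f$ on $M$.
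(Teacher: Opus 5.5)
Your proof is correct, but it does not follow the source. The paper proves nothing here and simply cites Nocedal--Wright, whose proof of Theorem 12.6 never parametrizes $M$.

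Their argument goes by contradiction. Take a feasible sequence $z_k\to x^*$ along which $f$ fails to grow quadratically, and extract a limiting unit direction $d$ of $(z_k-x^*)/\|z_k-x^*\|$. A first-order expansion of $h(z_k)=h(x^*)=0$ gives $d\in\ker J_{x^*}(h)$. Then Taylor-expand $\mathcal{L}(\cdot,\mu^*)$ to second order in the ambient space; it coincides with $f$ on the feasible set and has vanishing gradient at $x^*$. This forces $d^\top (H^g_{x^*}f)\,d$ below its positive minimum over unit vectors of $\ker J_{x^*}(h)$, a contradiction. That argument needs no constraint qualification and extends to inequality constraints via the critical cone.

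Your chart reduction needs the full-rank hypothesis in an essential way, to produce $\varphi$. In return it isolates the intrinsic fact behind the theorem: at a constrained critical point, $H_0(f\circ\varphi)=d\varphi_0^\top (H^g_{x^*}f)\, d\varphi_0$ in any chart. This holds because the acceleration term $\nabla f(x^*)^\top a$ is converted into $-\sum_i\mu_i^*\,v^\top H_{x^*}h_i\,v$ by the twice-differentiated constraints. Everything then reduces to the unconstrained sufficient condition, which fits the paper's differential-geometric viewpoint, such as its exponential-map charts.

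One small correction. The identity $\nabla f(x^*)=J_{x^*}(h)^\top\mu^*$ is just the hypothesis that $x^*$ is a critical point with multipliers $\mu^*$. You should not attribute it to Theorem~\ref{thm:first_order_KKT}: that theorem assumes $x^*$ is already a local minimum, so citing it makes the step circular as written.
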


\subsection{Critical Values and Regular Values}

The notion of regular and critical values, fundamental in differential topology, plays an essential role in understanding when constraint manifolds are smooth and well-behaved.

\begin{definition}[Regular and Critical Points]
\label{def:regular_critical_points}
Let $F: M \to N$ be a smooth map between manifolds. A point $p \in M$ is a \emph{regular point} of $F$ if the differential $dF_p: T_p M \to T_{F(p)} N$ is surjective. Otherwise, $p$ is a \emph{critical point}. A value $c \in N$ is a \emph{regular value} if every point in $F^{-1}(c)$ is a regular point; otherwise, $c$ is a \emph{critical value}.
\end{definition}
\begin{theorem}[Sard's Theorem (Theorem $6.10$ in \cite{lee2003smooth})]
\label{thm:sardstheorem}
Let $M$ and $N$ be smooth manifolds (possibly with boundary), and let $F: M \to N$ be a smooth map. Then the set of critical values of $F$ has measure zero in $N$.
\end{theorem}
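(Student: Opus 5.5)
The plan is to reduce to the Euclidean case and then run the classical Morse--Sard induction. Throughout, ``measure zero'' in a manifold $N$ means that the set has Lebesgue measure zero in every chart. This notion is well defined because smooth maps between open subsets of Euclidean spaces of the same dimension are locally Lipschitz, and so send null sets to null sets.

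\textbf{Reduction to open subsets of Euclidean space.} Since $M$ is second countable, I cover it by countably many charts $(U_i,\varphi_i)$, and for each point of $F(U_i)$ I choose charts $(V_j,\psi_j)$ of $N$. A countable union of null sets is null, so it suffices to show that each local representative $\psi_j\circ F\circ\varphi_i^{-1}$ has critical values of measure zero. Boundary points are handled by a splitting. If $p\in\partial M$ and $dF_p$ is not surjective, then $d(F|_{\partial M})_p$ is not surjective either, since it is the restriction of $dF_p$ to a subspace. Hence
\[
\text{crit.\ values of } F \subseteq F(\mathrm{Crit}(F|_{M^\circ}))\cup F(\mathrm{Crit}(F|_{\partial M})).
\]
Both $M^\circ$ and $\partial M$ are manifolds without boundary, of dimensions $n$ and $n-1$. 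If $N$ has boundary, I first extend $F$ locally into the model half-space, or else treat $N^\circ$ and $\partial N$ in the same way. This reduces everything to the statement for a smooth map $F:U\subset\mathbb{R}^n\to\mathbb{R}^m$ with $U$ open.

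\textbf{Euclidean case.} I argue by induction on $n$; the case $n=0$ is trivial. Let $C$ be the critical set of $F$. For $k\ge 1$, let $C_k$ be the set of points at which all partial derivatives of $F$ of order $1$ through $k$ vanish. This gives a decreasing chain of closed sets $C\supseteq C_1\supseteq C_2\supseteq\cdots$. I show that each of the following pieces has null image:
\[
F(C\setminus C_1),\qquad F(C_k\setminus C_{k+1})\ (k\ge1),\qquad F(C_k)\ \text{ for } k \text{ large}.
\]

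\emph{Step 1: $F(C\setminus C_1)$.} Near a point of $C\setminus C_1$ some partial $\partial F^1/\partial x_1\neq 0$. I change coordinates by $h(x)=(F^1(x),x_2,\dots,x_n)$, so that $F\circ h^{-1}(t,y)=(t,g_t(y))$. A point is critical for $F\circ h^{-1}$ iff $y$ is critical for $g_t$. The induction hypothesis shows that each slice $\{t\}\times g_t(\mathrm{Crit}\,g_t)$ is null in $\mathbb{R}^{m-1}$. Fubini then gives that the image is null, after a countable cover and compactness to make the sets measurable. If $m=1$ there is nothing to do, because $C=C_1$.

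\emph{Step 2: $F(C_k\setminus C_{k+1})$.} Near a point of $C_k\setminus C_{k+1}$ some $(k+1)$-st partial is nonzero. So there is a $k$-th partial $w$ that vanishes on $C_k$ and satisfies $\partial w/\partial x_1\neq0$. Then $C_k$ lies locally in the hypersurface $\{w=0\}$, which is an $(n-1)$-manifold. Every point of $C_k$ is critical for $F$ restricted to that hypersurface, so the induction hypothesis applies.

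\emph{Step 3: $F(C_k)$ for $k>n/m-1$.} Fix a cube $Q$ of side $\delta$. By Taylor's theorem, $|F(x+h)-F(x)|\le c|h|^{k+1}$ for $x\in C_k\cap Q$. I subdivide $Q$ into $r^n$ subcubes of side $\delta/r$. Each subcube meeting $C_k$ has image inside a cube of side $O(r^{-(k+1)})$. The total image measure is therefore $O(r^{n-m(k+1)})$, which tends to $0$ as $r\to\infty$ when $m(k+1)>n$.

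When $m>n$ this step already gives the whole result with $k=0$: every point is critical, and the same estimate shows $F(U)$ is null. I expect Step 1 to be the main obstacle, specifically the measurability bookkeeping needed to apply Fubini. I would handle it by working with compact pieces of $C\setminus C_1$, whose images are compact and hence measurable.
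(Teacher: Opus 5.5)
Your proposal is correct and is the standard Milnor-style proof. It reduces to charts, splits off $\partial M$, and then inducts on $n$: $F(C\setminus C_1)$ via Fubini slicing, $F(C_k\setminus C_{k+1})$ via restriction to the hypersurface $\{w=0\}$, and $F(C_k)$ for $m(k+1)>n$ via the Taylor cube estimate. This is essentially the proof in Lee's Theorem 6.10, which the paper cites rather than proving.

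One small simplification: the case where $N$ has boundary needs no extension. A boundary chart lands in $\mathbb{H}^m\subset\mathbb{R}^m$, and the local representative, viewed as a map into $\mathbb{R}^m$, has the same critical points.
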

The preimage of regular values are smooth manifolds.
\begin{theorem}[Theorem $5.12$ in \cite{lee2003smooth}]
\label{cor:regular_value_manifold}
Let $F: M \to N$ be a smooth map and suppose $c \in N$ is a regular value. Then $F^{-1}(c)$ is a smooth submanifold of $M$ with
\begin{equation}
\dim F^{-1}(c) = \dim M - \dim N.
\end{equation}
\end{theorem}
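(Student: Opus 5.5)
The approach is the standard one: verify that $F^{-1}(c)$ is locally the zero set of a submersion, then invoke the constant-rank / implicit function theorem to get slice charts. Set $n=\dim M$ and $k=\dim N$; surjectivity of $dF_p$ forces $n\ge k$.

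\textbf{Local submersion.} Fix $p\in F^{-1}(c)$. Choose a chart $(V,\psi)$ of $N$ around $c$ with $\psi(c)=0$. Choose a chart $(U,\varphi)$ of $M$ around $p$ with $F(U)\subset V$. In these coordinates $\hat F=\psi\circ F\circ\varphi^{-1}$ is a smooth map from an open subset of $\mathbb{R}^n$ to $\mathbb{R}^k$. Its Jacobian $J_{\varphi(p)}(\hat F)$ has rank $k$, because $p$ is a regular point.

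\textbf{Rank is open.} The set of points where $J(\hat F)$ has full rank $k$ is open, since it is the nonvanishing locus of some $k\times k$ minor. Shrinking $U$, we may therefore assume every point of $U$ is a regular point of $F$. (This holds already on $F^{-1}(c)$ by hypothesis; the extension to a neighborhood is what lets the rank theorem apply.)

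\textbf{Slice charts.} After permuting coordinates, the last $k$ columns of the Jacobian form an invertible $k\times k$ block. Define
\[
\Phi(x)=\bigl(x^1,\dots,x^{n-k},\hat F(x)\bigr).
\]
By the inverse function theorem, $\Phi$ is a local diffeomorphism near $\varphi(p)$. In the new coordinates $\Phi\circ\varphi$, the set $F^{-1}(c)$ is exactly $\{y: y^{n-k+1}=\dots=y^n=0\}$, i.e.\ an $(n-k)$-dimensional coordinate slice. Since every point of $F^{-1}(c)$ admits such a slice chart, $F^{-1}(c)$ is an embedded submanifold of dimension $n-k=\dim M-\dim N$.

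\textbf{Main obstacle.} The analytic content is entirely the inverse function theorem. The only care needed is bookkeeping: the charts must be compatible so that the slice charts overlap smoothly, which is automatic because they are restrictions of charts of $M$.

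For the boundary setting of Theorem~\ref{thm:sardstheorem}, one would also need $p$ to be interior or $F|_{\partial M}$ to be transverse. The paper applies the result only to open subsets of $\mathbb{R}^n$ with $h$ smooth, so the boundaryless version above is the one needed. This also justifies the earlier claim that $M=\{h=0\}$ is an $(n-k)$-manifold when $J_x(h)$ has full rank.
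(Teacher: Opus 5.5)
Your argument is correct and is the standard one. The paper gives no proof of its own; it cites Lee, where the result comes from using the rank theorem (Theorem~\ref{thm:smoothsubmesion}) to put $F$ locally in the form $(x^1,\dots,x^n)\mapsto(x^1,\dots,x^k)$, so that $F^{-1}(c)$ becomes a coordinate slice. Your map $\Phi$ is just an explicit inverse-function-theorem construction of those coordinates (the ``rank is open'' step is harmless but not needed for it).

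One correction to your closing aside: the paper also invokes this theorem on $\realfeasibleregionmlh{m}{h}\subset\realpodalfunctionspacel{m}$, which has the closed simplex $\convexcombm{m}$ as a factor (in Lemma~\ref{lem:isolatedminimum} and in the proof of Theorem~\ref{thm:pi_smoothness}). So it is not only applied on open subsets of $\mathbb{R}^n$, and your boundaryless version covers those uses directly only at points where $\pi$ lies in the interior of the simplex.
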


\subsection{The Rank Theorem}
We will repeatedly need one more piece of local control: near a point where a smooth map has full rank, a change of coordinates puts the map into a simple canonical form. The rank theorem makes this precise.

\begin{definition}
Let $f: M \to N$ be a smooth map between smooth manifolds of dimensions $n$
and $m$ respectively, with $n \geq m$. The map $f$ is called a smooth submersion if the differential   $df_x: T_x M \to T_{f(x)}N$ is surjective at every point $x \in M$.    
\end{definition}

\begin{theorem}[Theorem $4.12$ in \cite{lee2003smooth}]
\label{thm:smoothsubmesion}
Let $F:M \to N$ be a smooth submersion. Then for each $p \in M$ there exist smooth charts $(U,\varphi)$ for $M$ centered at $p$ and $(V,\psi)$  for $N$ centered at $F(p)$ such that $F(U) \subseteq V$, in which $F$ has a coordinate representation of the form
\begin{equation*}
\hat{F}(x_1, \cdots, x_n,x_{n+1}, \cdots, x_m) = (x_1, \cdots, x_n)
\end{equation*}
where $\hat{F} = \psi \circ  F \circ \varphi^{-1}$. Hence $F(U)$ is open in $N$ when $U \subset M$ is open.
\end{theorem}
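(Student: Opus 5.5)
We follow the standard route through the inverse function theorem. Write $\dim M = n$ and $\dim N = m$ with $n \geq m$; the coordinate form in the statement should be read as the projection of $\mathbb{R}^n$ onto its first $m$ coordinates. The argument is local, so first we pick arbitrary smooth charts $(U_0,\varphi_0)$ centered at $p$ and $(V_0,\psi_0)$ centered at $F(p)$, shrinking $U_0$ so that $F(U_0)\subseteq V_0$. This reduces everything to a smooth map $\tilde F=\psi_0\circ F\circ\varphi_0^{-1}$ between open subsets of $\mathbb{R}^n$ and $\mathbb{R}^m$ with $\tilde F(0)=0$. Since $F$ is a submersion, the Jacobian $J_0(\tilde F)$ is an $m\times n$ matrix of rank $m$.

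Next, we permute the coordinates of $\mathbb{R}^n$ (which amounts to composing $\varphi_0$ with a linear isomorphism) so that the leading $m\times m$ block $\bigl(\partial \tilde F^i/\partial x^j\bigr)_{i,j\le m}$ is invertible at $0$. We then define $G(x)=(\tilde F^1(x),\dots,\tilde F^m(x),x^{m+1},\dots,x^n)$. Its Jacobian at $0$ is block upper triangular, with the invertible $m\times m$ block in the top left and the identity in the bottom right, so it is invertible. By the inverse function theorem, $G$ restricts to a diffeomorphism from a neighborhood $U_1$ of $0$ onto an open set $G(U_1)$, which we may take to be an open cube centered at $0$.

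We now set $U=\varphi_0^{-1}(U_1)$, $\varphi=G\circ\varphi_0|_U$, and $(V,\psi)=(V_0,\psi_0)$. For $y\in\varphi(U)$ we have $y=G(x)$, so $\psi\circ F\circ\varphi^{-1}(y)=\tilde F(x)=(y_1,\dots,y_m)$, which is exactly the projection normal form. The chart $\varphi$ is still centered at $p$ because $G(0)=0$.

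For the openness claim, let $U'\subseteq U$ be open. Then $F(U')=\psi^{-1}\bigl(\pi(\varphi(U'))\bigr)$, where $\pi$ is the coordinate projection. Coordinate projections are open maps and $\psi^{-1}$ is a homeomorphism, so $F(U')$ is open. Since every $p$ has such a chart, $F(U)$ is open for every open $U\subseteq M$.

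The only delicate step is arranging the invertible minor by reordering coordinates while keeping the charts centered and the containment $F(U)\subseteq V$. Both are handled by shrinking domains, since restrictions of charts are still charts. Everything else is bookkeeping around the inverse function theorem.
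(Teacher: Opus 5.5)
Your proof is correct and follows the standard route. The paper gives no proof of its own: it only cites Lee's Theorem 4.12, whose proof is this same inverse-function-theorem construction $G(x)=(\tilde F^1(x),\dots,\tilde F^m(x),x^{m+1},\dots,x^n)$, specialized here to full rank. You also read the paper's inconsistent index convention correctly (it sets $\dim M=n\ge m=\dim N$ but writes the normal form in Lee's opposite convention) as projection onto the first $m$ coordinates, and your openness step via $F(U')=\psi^{-1}(\pi(\varphi(U')))$, taking a union over chart domains, is fine.
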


One immediate consequence is that a smooth submersion is an open map; if the submersion is also analytic, it carries the interior of its domain to the interior of its image, a fact we will use repeatedly to transport regularity from a step-function space of one size to another. 
\begin{definition}
Let $\{f_1, \cdots, f_m: \mathbb{R}^n \to \mathbb{R}\}$ be a set of analytic functions and $n \geq m$. We say that the functions are analytically independent  if $\text{rank}(J_x(f_1, \cdots, f_m)) = m$ for some $x$.     
\end{definition}
\begin{theorem}
\label{thm:denseinterior}
Let $\{f_1, \cdots, f_m: \mathbb{R}^n \to \mathbb{R}\}$   be real analytic and independent functions, with $n \geq m$. If $A^\circ$ is dense in $A$ then $f(A)^\circ$ is dense in $f(A)$. 
\end{theorem}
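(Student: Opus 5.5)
The plan is to reduce the statement to the local submersion picture of Theorem~\ref{thm:smoothsubmesion}. Write $f=(f_1,\dots,f_m):\mathbb{R}^n\to\mathbb{R}^m$. Let $Z=\{x\in\mathbb{R}^n:\operatorname{rank} J_x(f)<m\}$ be the set of critical points. The argument has two steps. First, $Z$ is closed and has empty interior. Second, every point of $f(A)$ is a limit of images of points of $A^\circ\setminus Z$, and each such image is an interior point of $f(A)$.

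\emph{Step 1 (the critical set is thin).} By analytic independence there is a point $x_0$ with $\operatorname{rank} J_{x_0}(f)=m$. Hence some $m\times m$ minor $D(x)$ of $J_x(f)$ satisfies $D(x_0)\neq 0$. Since each $f_i$ is real analytic, $D$ is a polynomial in the partial derivatives of the $f_i$, and is therefore real analytic on the connected set $\mathbb{R}^n$. If $D$ vanished on a nonempty open set, the identity theorem for real analytic functions would force $D\equiv 0$, contradicting $D(x_0)\neq0$. So the zero set of $D$ has empty interior. Because $Z\subseteq\{D=0\}$, the set $Z$ also has empty interior. It is closed, being the common zero set of all $m\times m$ minors. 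I expect this step to be the main point requiring care: it is the only place where analyticity, rather than mere smoothness, is used. With only $C^\infty$ functions, rank $m$ at one point would not prevent $Z$ from containing an open set that meets $A$.

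\emph{Step 2 (density).} Since $Z$ is closed with empty interior, $A^\circ\setminus Z$ is open and dense in $A^\circ$. Combined with the hypothesis that $A^\circ$ is dense in $A$, it follows that $A^\circ\setminus Z$ is dense in $A$. Now fix $y=f(a)\in f(A)$ and choose $x_k\in A^\circ\setminus Z$ with $x_k\to a$.

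At each $x_k$ the differential of $f$ is surjective, so $f$ is a submersion on an open ball $B_k\subseteq A^\circ\setminus Z$ around $x_k$. By Theorem~\ref{thm:smoothsubmesion}, $f(B_k)$ is open in $\mathbb{R}^m$. Since $f(B_k)\subseteq f(A)$, we get $f(x_k)\in f(A)^\circ$. By continuity of $f$, $f(x_k)\to f(a)=y$. Thus every point of $f(A)$ lies in the closure of $f(A)^\circ$, i.e.\ $f(A)^\circ$ is dense in $f(A)$.

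The edge cases cause no difficulty. If $A=\emptyset$ the claim is trivial. If $A\neq\emptyset$, density of $A^\circ$ in $A$ forces $A^\circ\neq\emptyset$, so the sequences above exist.
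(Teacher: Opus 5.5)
Your proof is correct and follows essentially the same route as the paper's: the critical set is closed and nowhere dense by analyticity, so $A^\circ$ minus the critical set is dense in $A$, and the rank theorem shows that the images of these regular interior points are interior points of $f(A)$ accumulating at every point of $f(A)$. The differences are cosmetic. You justify nowhere-density directly from a single nonvanishing $m\times m$ minor and the identity theorem, where the paper appeals to the structure of real analytic varieties, and you argue with sequences rather than with an arbitrary open neighborhood $W$ of $y$.
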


\begin{proof}
Since  $f$ is real analytic, the critical set $\text{Crit}(f) = \{x : \text{rank}(J_x(f)) < m\}$ is a real analytic variety, hence either all of $\mathbb{R}^n$ or a nowhere dense closed set (of dimension $\leq n-1$). Assume $f$ is not everywhere degenerate, then $\text{Crit}(f)$ is nowhere dense, and the regular set $R = \{x : \text{rank}(J_x(f)) = m\}$ is open and dense in $\mathbb{R}^n$. 

Hence by Theorem \ref{thm:smoothsubmesion}, $f|_R$ is an open map. 
Now take any $y \in f(A)$  and open $W \ni y$. We need to prove $W \cap f(A)^\circ \neq \emptyset$. 

$A^\circ \cap R$ is open and non-empty since $R$ is open and dense, thus $f(A^\circ \cap R) \subset f(A)^\circ$.  
$f^{-1}(W) \cap A^\circ \neq \emptyset$ since $A^\circ$  is dense in $A$. $f^{-1}(W) \cap R$ is open since $R$ is dense. Then it exists a $x' \in f^{-1}(W) \cap A^\circ \cap R$.
Since $f|_R$ is open, $f(x') \in f(A^\circ \cap R) \subset f(A)^\circ$, and $f(x') \in W$.

\end{proof}

\section{Multi relational graphons}
\label{sec:multirelationalgraphons}

We now recall the multi-relational graphon framework of \cite{alvarado2022limits}, which is the stage on which everything else in this paper takes place.

A multi-relational graph $G=(V, E_1, \cdots, E_r)$ packages $r$ ordinary graphs $(V,E_1), \ldots, (V,E_r)$ on a common vertex set $V$ into a single object, one relation $E_i$ per edge type.

Its continuum limit is an $r$-tuple of symmetric kernels rather than a single one: the space of $r$-relation graphons on the reals is
\begin{eqnarray*}
\realgraphonspacel = \{ W: [0,1]^2 \mapsto \mathbb{R}^r  :  W_k(x_1, x_2) =W_k(x_2,  x_1) \mbox{ for } k \in [r]\}
\end{eqnarray*}
and, more generally, for $A \subseteq \mathbb{R}$,
\begin{eqnarray*}
    \graphonspacel_{A}= \{  W:  [0,1]^2 \mapsto A^r  :  W_k(x_1,  x_2) =W_k(x_2, x_1) \mbox{ for } k \in [r]\}
\end{eqnarray*}
We call elements of $\graphonspacel_{(0,1)}$ and $\graphonspacel_{\mathbb{R}}$, respectively, \emph{purely random graphons} and \emph{real-valued graphons}; when $A$ is omitted, $A=[0,1]$.

As with ordinary graphons, the labeling of $[0,1]$ by itself carries no information: let $\sim$ be the equivalence relation on $\realgraphonspacel$ defined by $W \sim V$ iff there exists $\sigma \in \Sigma$ such that for all $k \in [r]$ and $(x_1, x_2) \in \mathbb{R}^2$ we have $W_k(x_1, x_2)=V_k(x_1, x_2)^\sigma$, where $V_k(x_1, x_2)^\sigma=V_k(\sigma(x_1), \sigma(x_2))$. Quotienting $\graphonspacel$ and $\realgraphonspacel$ by $\sim$ gives the spaces of \emph{unlabeled} graphons with $r$ relations, $\graphonspaceu=\graphonspacel/\sim$ and $\realgraphonspaceu=\realgraphonspacel/\sim$.

\noindent We topologize $\realgraphonspacel$ by the cut-norm
\begin{equation*}
    \| W \|_\Box = \sum_{k=1}^r \sup_{S_1, S_2 \subset [0,1]} \left|  \int_{S_1 \times S_2 } W_k(x_1, x_2) dx_1 dx_2 \right|
\end{equation*}
and $\realgraphonspaceu$ by the corresponding cut-distance
\begin{equation*}
    \delta_\Box(W,V) = \inf_{\sigma \in \Sigma} \| W-V^\sigma \|_\Box
\end{equation*}

\subsection{Subgraph density}

For a multi-relational graph $F$ and a multi-relational graphon $W$, the subgraph density of $F$ in $W$ is
\begin{equation*}
\label{eq:subgraphdensity}
    t(F,W) =  \int_{[0,1]^{|V(F)|}} \prod_{k=1}^r \prod_{(i_1, i_2) \in E_k(F)} W_k(x_{i_1},x_{i_2}) \prod_{i \in V(F)} dx_i
\end{equation*}
One generalization will turn out to be essential throughout the paper: attaching an analytic function to each relation before multiplying. Let $F$ be an $r$-graph, $W$ an $r$-graphon, and $h=\{h_i:\mathbb{R} \to \mathbb{R}\}_{i=1}^r$ a set of real analytic functions. The \emph{$h$-subgraph density} of $F$ modulated by $h$ -- or simply $h$-subgraph density of $F$ -- is
\begin{equation*}
\label{eq:multi-relational-graphdensity}
    t(F,W,h) =  \int_{[0,1]^{|V(F)|}} \prod_{k=1}^r \prod_{(i_1, i_2) \in E_k(F)} h_k(W_k(x_{i_1},x_{i_2})) \prod_{i \in V(F)} dx_i
\end{equation*}
When $h$ is the identity, we omit it from the notation. This single generalization is what lets the entropy rate function reappear, later in the paper, as just another $h$-subgraph density rather than as a special case requiring its own machinery.


\subsection{The most typical random multi-relational graphons}
\label{sec:keyresultsgraphons}

The key results of graphon theory -- compactness, the counting lemma, the large deviation principle -- extend to multi-relational graphs \cite{alvarado2022limits}. We restate the result we need: that the most typical random multi-relational graph arises as the solution of a maximum entropy problem.
\begin{theorem}[Theorem $8$ in \cite{alvarado2022limits}]
\label{thm:limitmultigraphons}
Let $\mathcal{F}$ be a finite vector of multi-relational quantum graphs
$\mathcal{F} = [\mathcal{F}_1,\cdots, \mathcal{F}_k]$  and let $u \in \mathbb{R}^k$ be a vector of multi-relational subgraph densities.

Then the limit $\optimalsolution$ of the sequence $(G^{[r]}(n, 1/2))^\infty_{n=1}$
 of growing random multi-relational graphs $G^{[r]}(n, 1/2)$, conditioned on $\lim_{n \to \infty} G^{[r]}(n, 1/2) \in S(\mathcal{F}, u)$ (limit in the topology of the unlabeled multi-relational graphon space, see \cite{alvarado2022limits}), satisfies
\[
\optimalsolution = \argmin_{W \in S^{(r)}(\mathcal{F}, u)} I(W)
\]
where $I(W) = \sum_k \int_{[0,1]^2} I_0(W_k(x,y)) dxdy$ and $I_0(u) = \frac{1}{2} u\log(u) + \frac{1}{2} (1-u) \log(1-u)$ and
\[
S^{(r)}(\mathcal{F}, u) = \{W \in \graphonspaceu \, :  \, t(\mathcal{F},W)=u\}
\]
\end{theorem}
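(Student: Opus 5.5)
The statement is a restatement of Theorem 8 of \cite{alvarado2022limits}, so the plan is to retrace the standard Chatterjee--Varadhan large deviation argument in the multi-relational setting rather than build anything new. The approach has three steps. First, I would establish a large deviation principle for the random multi-relational graphs $G^{[r]}(n,1/2)$, viewed as random elements of the compact space $\graphonspaceu$ with the cut-distance $\delta_\Box$. The speed is $n^2$ and the rate function is $I(W)-\min I$; note that $I_0$ attains its minimum $-\tfrac12\log 2$ at $1/2$. The $r$ relations are independent Erd\H{o}s--R\'enyi graphs, so the joint law factorizes over $k\in[r]$. The upper and lower bounds then follow as in Chatterjee--Varadhan: partition into blocks via the weak regularity lemma, and use the multi-relational compactness of $(\graphonspaceu,\delta_\Box)$ from \cite{alvarado2022limits}. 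Lower semicontinuity of $I$ in the cut topology gives a good rate function, and convexity of $I_0$ supplies the lower bound by approximating with step functions.

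Second, I would show that $S^{(r)}(\mathcal{F},u)$ is closed in $\graphonspaceu$. The multi-relational counting lemma makes $W\mapsto t(\mathcal{F},W)$ continuous for $\delta_\Box$, and quantum graphs are finite linear combinations, so $S$ is a closed subset of a compact space. Conditioning on $S$ (in practice on a shrinking neighborhood $S_\epsilon=\{W:|t(\mathcal{F},W)-u|<\epsilon\}$, since the event $S$ itself may have probability zero for finite $n$) yields, via the LDP, an exponential concentration estimate: the conditional probability of staying at distance $\ge\eta$ from $\argmin_{S} I$ is at most $\exp(-n^2 c)$ for some $c>0$. This uses the upper bound on the closed set $\overline{S_\epsilon}\setminus B_\eta(\argmin)$ and the lower bound on the open set $S_\epsilon$.

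Third, I would let $\epsilon\to0$ and use lower semicontinuity to identify the conditional limit with $\argmin_{W\in S}I(W)$. The main obstacle is the matching of $\inf_{S_\epsilon^\circ} I$ with $\inf_{S} I$. This requires $u$ to be a non-degenerate (interior-type) value, so that minimizers of $I$ on $S$ can be approximated by points of the open neighborhoods with nearly equal entropy. I would handle this by perturbing a minimizer $W$ slightly toward the constant $1/2$ graphon, using continuity of $I$ along such segments and of $t(\mathcal{F},\cdot)$. This is exactly the regime in which the statement is meant (the non-extremal region), so I would adopt it as the standing hypothesis.
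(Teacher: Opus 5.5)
Your route is the one the paper relies on: a multi-relational Chatterjee--Varadhan LDP on $(\widetilde{\mathcal{W}}^{(r)},\delta_\Box)$, built from the weak regularity lemma, compactness and the counting lemma, followed by conditioning on the open neighborhoods $S_\epsilon=\{W:|t(\mathcal{F},W)-u|<\epsilon\}$. The paper does not reprove the statement; it imports it from \cite{alvarado2022limits}, citing exactly those ingredients. One caution on Step 1: the factorization over $k$ holds only at the labeled level. $\widetilde{\mathcal{W}}^{(r)}$ quotients by a single $\sigma$ for all coordinates, so the regularity partition must be common to all $r$ relations.

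The real flaw is in Step 3. The ``main obstacle'' you name does not exist, and the non-extremality hypothesis you adopt to get around it is not part of the statement. The paper restricts to non-extremal $u$ only later, in Lemma~\ref{lem:randomsolutions} and Theorem~\ref{thm:radin}. As written, you would therefore prove a strictly weaker theorem. Since $S_\epsilon$ is open and contains $S$, the LDP lower bound already gives
\[
\liminf_{n\to\infty}\frac{1}{n^2}\log\mathbb{P}\bigl(G^{[r]}(n,1/2)\in S_\epsilon\bigr)\;\ge\;-\Bigl(\inf_{S_\epsilon}I-\min I\Bigr)\;\ge\;-\Bigl(\inf_{S}I-\min I\Bigr)
\]
for every $u$ with $S\neq\emptyset$. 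A minimizer on $S$ is itself a point of $S_\epsilon$, so nothing has to be approximated. Your proposed fix would not help in any case: moving a minimizer toward the constant $1/2$ graphon changes $t(\mathcal{F},\cdot)$ and so leaves $S$. An interior-type condition would be needed only if you conditioned on the closed set $S$ itself, which would require $\inf_{S^\circ}I=\inf_S I$ for an interior that is typically empty. Avoiding this is exactly what the $\epsilon$-neighborhoods are for.

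What Step 3 actually has to supply is the opposite inequality, on the upper-bound side. Let $B_\eta$ be the open $\delta_\Box$-neighborhood of $\argmin_{S} I$. You need
\[
\liminf_{\epsilon\to0}\ \inf_{\overline{S_\epsilon}\setminus B_\eta} I\;\ge\;\min_{S\setminus B_\eta} I\;>\;\min_S I .
\]
For the first inequality, take near-minimizers $W_\epsilon\in\overline{S_\epsilon}\setminus B_\eta$ and extract a $\delta_\Box$-convergent subsequence by compactness. The limit satisfies $t(\mathcal{F},\cdot)=u$ by continuity of $t(\mathcal{F},\cdot)$ (the counting lemma), and it lies outside the open set $B_\eta$. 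Lower semicontinuity of $I$ then gives the bound. For the second inequality, $S\setminus B_\eta$ is compact, $I$ attains its minimum there, and that set contains no minimizer of $I$ on $S$; if it is empty, the infimum is $+\infty$. Combining this with the lower bound above, for each $\eta$ you can choose $\epsilon$ small enough that your exponential estimate holds with some $c>0$. That is the concentration statement, with no restriction on $u$. Drop the standing hypothesis and replace the perturbation argument with this compactness and lower-semicontinuity step; the rest of your plan stands.
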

This is the multi-relational form of the RRS conjecture: that the entropy-maximizing solutions $\optimalsolution$ of this problem are step functions.

\subsection{Quantum graphs}

One last piece of bookkeeping makes the second-derivative computations of Section~\ref{sec:stepfunctionspace} far less cumbersome: allowing formal linear combinations of graphs.

\begin{definition}[Quantum graphs]
A quantum graph $F$ is a finite linear combination of multi-relational graphs $F_i$ with real coefficients,
\begin{equation*}
    F = \sum_i \alpha_i F_i \quad  \mbox{ and } \alpha_i \in \mathbb{R}.
\end{equation*}
The multi-relational graphs $F_i$ are the \emph{constituents} of the quantum graph, and $t(F,W,h)$ extends to quantum graphs linearly: $t(F,W,h) = \sum_i \alpha_i t(F_i,W,h)$.
\end{definition}

\section{Step functions in $\realgraphonspacel$}
\label{sec:stepfunctionspace}

Step functions are the finite-dimensional skeleton on which the rest of the paper is built: every optimization we run takes place on a space of step functions of some fixed size $m$, and the entropy-maximization problem is solved once we know that, for $m$ large enough, this finite-dimensional picture already contains the global optimum. We now set up that skeleton precisely.

Let $\lebesgue$ be the Lebesgue measure on $\mathbb{R}$, and let $S=\{S_1, \cdots, S_m\}$ be a partition of $[0,1]$ into $m$ intervals. Let $\convexcombm{m}$ be the $m$-simplex of probability vectors of length $m$,
\begin{equation*}
\convexcombm{m} = \{ \pi \in [0,1]^m  :  \sum_i \pi_i = 1 \}.
\end{equation*}
\noindent Taking $\pi \in \convexcombm{m}$ with $\nu(S_i)=\pi_i$, we call $\pi$ the \emph{partition vector} of the step function it represents.

Let $\mathbb{R}_{sym}^{m \times m}$ denote the symmetric $m \times m$ matrices and $\mathbb{R}_{sym}^{(m \times m)^r}$ the Cartesian product of $r$ copies of $\mathbb{R}_{sym}^{m \times m}$. Given $\sfmat \in \mathbb{R}_{sym}^{(m \times m)^r}$ and $\pi \in \convexcombm{m}$, the corresponding \emph{$m$-step function} $(\sfmat,\pi):[0,1]^2 \mapsto \mathbb{R}^r$ is constant on each block $S_i \times S_j$, in every coordinate $k \in [r]$:
\begin{equation*}
(\sfmat,\pi)_k =\sum_{i,j=1}^m \sfmat_{i,j; k}
 \mathbbm{1}_{S_i \times S_j},
\end{equation*}
where $\sfmat_{i,j; k}$ is the $(i,j)$ entry of relation $k$. We denote the space of $r$-multi-relational $m$-step functions by $\realpodalfunctionspacel{m}$; it is naturally identified with $\mathbb{R}^{r\frac{m(m-1)}{2}} \times \convexcombm{m}$, so that $\partial \realpodalfunctionspacel{m+1} = \realpodalfunctionspacel{m}$ for $m \geq 1$.

It will also be convenient to freeze $\pi$ and let only $\sfmat$ vary: write $\realpodalfunctionspacelpi{m}{\pi}= \mathbb{R}_{sym}^{(m \times m)^r} \times \{ \pi \}$ for the resulting slice, with $r \frac{m(m-1)}{2}$ parameters against the $r \frac{m(m-1)}{2}+m-1$ of the full $\realpodalfunctionspacel{m}$.

On $\podalfunctionspacel{m}$, the integral defining $t(F,W,h)$ collapses to a finite sum,
\begin{equation*}
t(F,(\sfmat,\pi),h) = \sum_{\tiny x_1, \cdots, x_{|V(F)|}=1}^m \prod_{k=1}^r  \prod_{(s, t) \in E_k(F)} h_k(\sfmat_{s,t;k }) \prod_{i \in V(F)} \pi_i
\end{equation*}
which, writing $E(F) = \cup_k E_k(F) \times \{k\}$ for the edges of $F$ tagged by relation, we will often abbreviate as
\begin{equation*}
t(F,(\sfmat,\pi),h) = \sum_{\tiny x_1, \cdots, x_{|V(F)|}=1}^m   \prod_{(s,t;k) \in E(F)} h_k(\sfmat_{x_s x_t;k}) \prod_{i \in V(F)} \pi_i.
\end{equation*}

\subsection{The refinement operator}
\label{sec:refinementoperator}
The same step function admits many representations -- coarser ones with fewer blocks, finer ones with more -- and the size $m$ we eventually need to certify a global optimum will keep changing throughout the paper. We therefore fix, once and for all, an operator that moves between representations of a step function without changing the function itself: \emph{refinement} splits one block into two.
\begin{definition}[The refinement operation on step functions]
\label{def:refinementoperator}
Let $\lambda \in (0,1)$ and $l \in [m]$. We denote by $\theta(\cdot,\lambda,l):\realpodalfunctionspacel{m} \to \realpodalfunctionspacel{m+1}$ the map defined by $\theta((\sfmat,\pi), \lambda,l)=(\theta(\sfmat,l),\theta(\pi,\lambda,l))$, where $\theta(\sfmat,l)\in \mathbb{R}_{sym}^{(m+1 \times  m+1)^r}$ is obtained by duplicating the $l$-th row/column of each coordinate of $\sfmat$. Precisely, the entry $\theta_{i,j;k}(\sfmat,l)$ is
\begin{equation*}
\theta_{i,j;k}(\sfmat,l) = \left\lbrace
\begin{array}{c c}
\sfmat_{i,j;k} & 1 \leq i,j \leq l \\
\sfmat_{l,j;k} & i=l+1, j \leq l \\
\sfmat_{i,l;k} & i \leq l, j=l+1 \\
\sfmat_{l,l;k} & i=l+1, j=l+1 \\
\sfmat_{i-1,j-1;k} & l+1  < i,j \leq m \\
\sfmat_{i,j-1;k} & 1 \leq i \leq l, l+1  < j \leq m \\
\sfmat_{i-1,j;k} &  l+1  < i \leq m, 1 \leq j \leq l \\
\end{array}
\right.
\end{equation*}
and the entry $\theta_i(\pi,\lambda,l)\in \convexcombm{m+1}$ is
\begin{equation*}
\theta_i(\pi,\lambda,l) = \left\lbrace
\begin{array}{c c}
\pi_{i} & i < l \\
\lambda \pi_l & i = l \\
(1-\lambda) \pi_l & i = l+1 \\
\pi_{i-1} &  l + 1 < i \leq m+1
\end{array} \right.
\end{equation*}
In words, $\theta(\cdot, \lambda, k)$ splits the $k$-th row/column of $(\sfmat,\pi)$ into two rows/columns of weights $\lambda \pi_k$ and $(1-\lambda) \pi_k$, in every coordinate of $(\sfmat,\pi)$.
\end{definition}
\begin{figure}[h]
\centering
\begin{tikzpicture}
\node[anchor=north west, inner sep=0pt] at (0,0) {
\(
\begin{array}{c}
(\sfmat, \pi) \\
\begin{array}{c|ccc|}
      & \pi_1 & \pi_2 & \pi_3 \\
\hline
\pi_1 & A_{11} & A_{12} & A_{13} \\
\pi_2 & A_{12} & A_{22} & A_{23} \\
\pi_3 & A_{13} & A_{23} & A_{33} \\
\hline
\end{array}
\end{array}
\quad
\Rightarrow
\quad
\begin{array}{c}
\theta((\sfmat, \pi), \lambda, 3) \\
\begin{array}{c|cccc|}
      & \pi_1 & \pi_2 & \lambda \pi_3 & (1-\lambda)\pi_3 \\
\hline
\pi_1 & A_{11} & A_{12} & A_{13} & A_{13} \\
\pi_2 & A_{12} & A_{22} & A_{23} & A_{23} \\
\lambda \pi_3 & A_{13} & A_{23} & A_{33} & A_{33} \\
(1-\lambda)\pi_3 & A_{13} & A_{23} & A_{33} & A_{33} \\
\hline
\end{array}
\end{array}
\)
};
\end{tikzpicture}
\label{fig:refinementmap}
\caption{A refinement operation transforms the representation of a $3$-step function $(\sfmat, \pi)$ into a $4$-step function $\theta((\sfmat,\pi),\lambda,3)$.}
\end{figure}
Crucially, refinement is purely a change of representation: as a function $(\sfmat,\pi):[0,1]^2 \to \mathbb{R}^r$, nothing changes -- only the number of parameters used to describe it grows.

\subsubsection{The inverse of the refinement operations}

Refinement also runs in reverse. If $(\sfmat,\pi)$ has two identical rows, it is itself the refinement of some smaller step function $(\sfmat',\pi')$, obtained by merging those rows back together; we now make this \emph{coarsening} operation precise.

Define an equivalence relation on the row/column indices of $\sfmat$ by $i \sim_A j \iff \sfmat_{i,\cdot;k} = \sfmat_{j,\cdot;k}$ for all $k \in [r]$, and let $[i]_{\sim_\sfmat}$ be the equivalence class of the $i$-th row/column, ordered so that $[i]_{\sim_\sfmat}[1]$ is the position of its first representative. Let $S = \{[1]_{\sim_\sfmat}[1], \cdots, [m]_{\sim_\sfmat}[1]\}$ collect these first positions, and let $rk(j)$ be the rank of $[j]_{\sim_\sfmat}$ in $S$. The coarsening operator $\psi:\realpodalfunctionspacel{m} \to \cup_{m'=1}^m \realpodalfunctionspacel{m'}$ then acts entrywise by
\[
\psi_{rk(i),rk(j);k}((\sfmat,\pi)) = \sfmat_{[i]_{\sim_\sfmat}[1],[j]_{\sim_\sfmat}[1];k} \mbox{ for all } i,j \in [m] \mbox{ and } k \in [r]
\]
and
\[
\psi_{rk(i)}(\pi) = \sum_{j \in [i]_{\sim_\sfmat}} \pi_j.
\]
By construction, $\psi((\sfmat,\pi))$ has no repeated rows and represents the same step function $(\sfmat,\pi)$ in its minimal-size form. We write $\#((\sfmat,\pi))$ for the size of $\psi((\sfmat,\pi))$, i.e.\ the number of non-repetitive row/columns the step function actually has.

\subsection{Partial Derivatives of $t(F, (\sfmat,\pi),h)$}

The local and global analyses of Sections~\ref{sec:computability} and \ref{sec:caseI} both turn, at bottom, on differentiating $t(F,\cdot,h):\realpodalfunctionspacel{m} \to \mathbb{R}$. To state the derivative formulas cleanly, we first introduce \emph{labeled} multi-relational graphs and the partial, $h$-modulated subgraph densities they index.
 \subsubsection{Labeled Graphs and Partial Subgraph Densities}
\begin{definition}[Labeled Graphs]
\label{def:labeledgraphs}
Let $F^{\bullet(a b),k}$ be the $2$-labeled graph obtained from $F$ by deleting the edge $(a b)$ in $E_k(F)$ and labeling the vertices $\{a, b\}$, and let $F^{\bullet a}$ denote the $1$-labeled graph obtained from $F$ by labeling the vertex $a$. The $h$-subgraph density of these labeled graphs is
\begin{eqnarray*}
     t_{i j; k } (F^{\bullet(a  b),k},(\sfmat,\pi),h) &=& \sum_{\{x_s\}_{s\notin\{a,b\}} } h_k'(\sfmat_{x_a, x_b;k}) \prod_{(s, t;u )  \in E(F) \setminus \{(a,b;k)\}}   h_k(\sfmat_{x_s, x_t,u}) \\&& \prod_{k \in V(F) \setminus \{a, b\}} \pi_{x_k}
\end{eqnarray*}
where $x_a = i$ and $x_b = j$ are the fixed vertex indices corresponding to the labeled vertices $a$ and $b$ respectively, and the sum runs over all remaining vertex assignments $\{x_s : s \in V(F) \setminus \{a, b\}\}$,
and
\begin{equation*}
    t_i(F^{\bullet a},(\sfmat,\pi),h) = \sum_{\{x_s\}_{s\notin\{a\}} } \prod_{(a, b;k)  \in E(F) } h_k(\sfmat_{x_a, x_b;k}) \prod_{k \in V(F) \setminus \{a\}} \pi_{x_k}.
\end{equation*}
\end{definition}
Note that $t_{i j; k } (F^{\bullet(ab),k},(\sfmat,\pi),h)=t_{i j; k } (F^{\bullet(ba),k},(\sfmat,\pi),h)$, since relabeling the two marked vertices just relabels the corresponding indices. We will also need a shorthand for linear combinations of these partial densities, one term per edge of a fixed relation.
\begin{definition}[Linear combination of partial subgraph densities.]
  \label{def:partialbullets}
Let $F$ be a multi-relational graph and $k \in [r]$. We write
\begin{equation*}
\partial_k^{(\bullet \bullet)} F =  \sum_{(a,b) \in E_k(F)} F^{\bullet(a b),k}
\end{equation*}
and
\begin{equation*}
    \partial^{ \bullet} F = \sum_{a \in V(F)} F^{\bullet a}.
\end{equation*}
\end{definition}
\subsubsection{Partial Derivatives of $t(F,(\sfmat,\pi),h)$ in $\realpodalfunctionspacel{m}$}

\begin{restatable}{theorem}{thmpartialderivatives}
\label{thm:partialderivatives}
Let $(\sfmat,\pi) \in \realpodalfunctionspacel{m}$.
Then the first partial derivative of $t(F,\cdot, h)$ is
\begin{eqnarray}
\label{eq:1partialderstep}
\frac{\partial t(F, \cdot, h)}{\partial \sfmat_{i, j;k}}  &&= \pi_i \pi_j (2-\delta_{i,j}) t_{ij;k}(\partial_k^{(\bullet \bullet)}F, \cdot,h)
\end{eqnarray}
where $\delta_{ij}$ is the Kronecker delta.
\end{restatable}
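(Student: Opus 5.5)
The proof is a direct product-rule computation on the finite-sum representation of $t(F,(\sfmat,\pi),h)$. Because $\sfmat$ is symmetric, I treat $\sfmat_{i,j;k}$ and $\sfmat_{j,i;k}$ as one coordinate when $i\neq j$. So $\sfmat_{i,j;k}$ enters every factor $h_k(\sfmat_{x_s x_t;k})$ with $(s,t;k)\in E(F)$ and $\{x_s,x_t\}=\{i,j\}$.

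First, differentiate the finite sum term by term:
\[
t(F,(\sfmat,\pi),h)=\sum_{x_1,\dots,x_{|V(F)|}=1}^m \prod_{(s,t;u)\in E(F)} h_u(\sfmat_{x_sx_t;u})\prod_{v\in V(F)}\pi_{x_v}.
\]
Since the $\pi$-factors do not depend on $\sfmat$, the product rule gives one summand for each edge $(a,b;k)\in E_k(F)$. In that summand the factor $h_k(\sfmat_{x_ax_b;k})$ is replaced by $h_k'(\sfmat_{x_ax_b;k})\,\partial\sfmat_{x_ax_b;k}/\partial\sfmat_{i,j;k}$. Edges of other relations $u\neq k$ contribute nothing.

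Second, evaluate the indicator. The derivative $\partial\sfmat_{x_ax_b;k}/\partial\sfmat_{i,j;k}$ equals $1$ exactly when $(x_a,x_b)\in\{(i,j),(j,i)\}$, and is $0$ otherwise.
\begin{itemize}
\item If $i\neq j$, these are two disjoint assignments. Fixing $x_a=i,x_b=j$ and summing the remaining indices gives $\pi_i\pi_j\,t_{ij;k}(F^{\bullet(ab),k},(\sfmat,\pi),h)$. The assignment $x_a=j,x_b=i$ gives $\pi_j\pi_i\,t_{ji;k}(F^{\bullet(ab),k},\cdot,h)$.
\item If $i=j$, there is the single assignment $x_a=x_b=i$, giving $\pi_i^2\,t_{ii;k}(F^{\bullet(ab),k},\cdot,h)$.
\end{itemize}

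Third, combine the two cases. I claim $t_{ji;k}(F^{\bullet(ab),k},\cdot,h)=t_{ij;k}(F^{\bullet(ab),k},\cdot,h)$, so the $i\neq j$ case yields $2\pi_i\pi_j\,t_{ij;k}$. Together with the diagonal case this produces the factor $(2-\delta_{i,j})$. Summing over $(a,b)\in E_k(F)$ and using linearity of $t$ in quantum graphs (Definition \ref{def:partialbullets}) then gives \eqref{eq:1partialderstep}.

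The main obstacle is justifying the claim in the third step. The remark after Definition \ref{def:labeledgraphs} gives $t_{ij;k}(F^{\bullet(ab),k})=t_{ij;k}(F^{\bullet(ba),k})$. What I need is different: invariance under swapping $i$ and $j$ for a fixed ordered edge $(a,b)$. This should follow from the symmetry of $\sfmat$, because $h_k'(\sfmat_{ji;k})=h_k'(\sfmat_{ij;k})$, together with relabeling the two marked vertices. The relabeling exchanges the roles of $a$ and $b$, so the result is the same density for $F^{\bullet(ba),k}$, and that remark then identifies the two. I will also read the unordered edge set $E_k(F)$ so that each edge is counted once, to avoid double-counting in $\partial_k^{(\bullet\bullet)}F$. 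Finally, I read the factor $\prod_{k\in V(F)\setminus\{a,b\}}\pi_{x_k}$ in Definition \ref{def:labeledgraphs} as excluding the labeled vertices, which is what makes $\pi_i\pi_j$ appear separately.
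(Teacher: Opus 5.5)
\textbf{Gap in the third step.} Your route is the paper's route: the product rule on the finite sum, the Kronecker-delta formula for $\partial\sfmat_{x_ax_b;l}/\partial\sfmat_{i,j;k}$, and collapsing to the two assignments $(x_a,x_b)=(i,j)$ and $(j,i)$. Your first two steps, summed over edges, correctly give
\[
\frac{\partial t(F,\cdot,h)}{\partial \sfmat_{i,j;k}}=\pi_i\pi_j\Bigl(t_{ij;k}(\partial_k^{(\bullet\bullet)}F,\cdot,h)+(1-\delta_{i,j})\,t_{ji;k}(\partial_k^{(\bullet\bullet)}F,\cdot,h)\Bigr).
\]
The gap is your third step. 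The per-edge identity $t_{ji;k}(F^{\bullet(ab),k},\cdot,h)=t_{ij;k}(F^{\bullet(ab),k},\cdot,h)$ is false in general. Symmetry of $\sfmat$ only symmetrizes the single factor $h_k'(\sfmat_{ij;k})$; the rest of $F^{\bullet(ab),k}$ hangs differently off $a$ and off $b$. Relabeling does give $t_{ji;k}(F^{\bullet(ab),k})=t_{ij;k}(F^{\bullet(ba),k})$. But the remark after Definition~\ref{def:labeledgraphs}, read literally as $t_{ij;k}(F^{\bullet(ab),k})=t_{ij;k}(F^{\bullet(ba),k})$, is, via that relabeling identity, equivalent to the very claim you want. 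Citing it therefore proves nothing.

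\textbf{Counterexample.} Take $r=1$, $h$ the identity, and $F=K_{1,3}$ with center $c$ and edges $(c,l_1),(c,l_2),(c,l_3)$. Put $d_x=\sum_y\sfmat_{xy}\pi_y$ and drop the relation index.
\begin{itemize}
\item $t(F,(\sfmat,\pi))=\sum_x\pi_xd_x^3$.
\item $t_{ij}(F^{\bullet(cl_q)})=d_i^2$, but $t_{ji}(F^{\bullet(cl_q)})=d_j^2$.
\item For $i\neq j$ the true derivative is $3\pi_i\pi_j(d_i^2+d_j^2)$, while the claimed right-hand side is $6\pi_i\pi_j d_i^2$.
\item With $m=2$, $\pi=(\tfrac12,\tfrac12)$, $\sfmat_{11}=1$, $\sfmat_{12}=\sfmat_{22}=0$, these are $\tfrac{3}{16}$ and $\tfrac38$.
\end{itemize}
Reorienting edges does not help. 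With $p$ edges oriented center-first, the right-hand side is $2\pi_i\pi_j\bigl(p\,d_i^2+(3-p)\,d_j^2\bigr)$, which never matches. So reading $E_k(F)$ as unordered does not rescue the step: $F^{\bullet(ab),k}$ still requires choosing which endpoint receives the index $i$, and the stated formula depends on that choice.

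\textbf{What is actually provable.} The paper's own proof reaches the same two-term expression and then replaces $t_{ji;k}$ by $t_{ij;k}$ without justification. You inherited this gap rather than introduced it, but the statement as written cannot be proved in this generality. Your computation does establish the two-term formula above. The $(2-\delta_{i,j})$ form follows from it exactly when $t_{ij;k}(\partial_k^{(\bullet\bullet)}F)=t_{ji;k}(\partial_k^{(\bullet\bullet)}F)$. This holds, for instance:
\begin{itemize}
\item when every $F^{\bullet(ab),k}$ has an automorphism exchanging $a$ and $b$ (triangles, cliques, cycles);
\item for arbitrary $F$, once $\partial_k^{(\bullet\bullet)}F$ is redefined symmetrically as $\tfrac12\sum_{(a,b)\in E_k(F)}\bigl(F^{\bullet(ab),k}+F^{\bullet(ba),k}\bigr)$.
\end{itemize}
You should either prove the two-term version, or state one of these hypotheses or redefinitions explicitly.
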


\begin{proof}
In the next lines, we abbreviate $t(F,(\sfmat, \pi),h)$ as $t(F)$. For the first partial derivative, we compute
\begin{eqnarray*}
\frac{\partial t(F, \cdot, h)}{\partial \sfmat_{i, j;k}} &=& \sum_{x_1, \cdots,x_{|F|}=1}^m   \frac{\partial }{\partial \sfmat_{i j;k}} \left\lbrace  \prod_{(s,t;l ) \in E(F)} h_k(\sfmat_{x_{s} x_{t};l} ) \right\rbrace \prod_{s \in V(F)} \pi_{x_s} \\
&=& \sum_{x_1, \cdots,x_{|F|}=1}^m \left\lbrace  \sum_{(a, b;l) \in E(F)} \frac{\partial h_k(\sfmat_{x_a, x_b;l}) }{\partial \sfmat_{i, j; k}}  \prod_{(s,t;u) \in E(F) \setminus \{(a,b;l)\}} h_k(\sfmat_{x_s  x_t;u}) \right \rbrace \\ && \prod_{s \in V(F)} \pi_{x_s}  \\
&=& \sum_{x_1, \cdots,x_{|F|}=1}^m \left\lbrace  \sum_{(a, b;l) \in E(F)} h'_k( \sfmat_{x_a x_b; l} ) \frac{\partial \sfmat_{x_a x_b;l } }{\partial \sfmat_{i, j;k}}   \left. \prod_{(s,t;u) \in E(F) \setminus \{(a,b;l)\}} h_k(\sfmat_{x_s x_t ;u}) \right \rbrace \right. \\ && \prod_{s \in V(F)} \pi_{x_s}
\end{eqnarray*}
Note that
\[
\frac{\partial \sfmat_{x_a x_b;l } }{\partial \sfmat_{i, j;k}} = \delta_{l,k}(\delta_{x_a,i}\delta_{x_b,j}  + (1-\delta_{i,j})\delta_{x_a,j}\delta_{x_b,i} )
\]
Substituting this in,
\begin{eqnarray*}
\frac{\partial t(F, \cdot, h)}{\partial \sfmat_{i, j;k}} &=& \sum_{x_1, \cdots,x_{|F|}=1}^m \left\lbrace  \sum_{(a, b;l) \in E(F)} h'_k( \sfmat_{x_a x_b; l} )
\delta_{l,k}(\delta_{x_a,i}\delta_{x_b,j}+(1-\delta_{i,j})\delta_{x_a,j}\delta_{x_b,i}  ) \right.\\ && \left.
 \prod_{(s,t;u) \in E(F) \setminus \{(a,b;l)\}} h_k(\sfmat_{x_s x_t ;u}) \right \rbrace   \prod_{s \in V(F)} \pi_{x_s}
\end{eqnarray*}
which we split into the two terms of the indicator sum:
\begin{eqnarray*}
\frac{\partial t(F, \cdot, h)}{\partial \sfmat_{i, j;k}}  &&=
\sum_{(a, b;l) \in E(F)} \left\lbrace \sum_{x_1, \cdots,x_{|F|}=1}^m  h'_k( \sfmat_{x_a x_b; l} )
\delta_{x_a,i}\delta_{x_b,j} \delta_{l,k}
 \prod_{(s,t;u) \in E(F) \setminus \{(a,b;l)\}} h_k(\sfmat_{x_s x_t ;u})  \right. \\ &&
+\left. (1-\delta_{i,j})\sum_{x_1, \cdots,x_{|F|}=1}^m  h'_k( \sfmat_{x_a x_b; l} )
\delta_{x_a,i}\delta_{x_b,j} \delta_{l,k}
 \prod_{(s,t;u) \in E(F) \setminus \{(a,b;l)\}} h_k(\sfmat_{x_s x_t ;u}) \right \rbrace  \prod_{s \in V(F)} \pi_{x_s}
\end{eqnarray*}
Evaluating the Kronecker deltas $\delta_{x_a,i}\delta_{x_b,j} \delta_{l,k}$ and $\delta_{x_a,i}\delta_{x_b,j} \delta_{l,k}(1-\delta_{i,j})$ collapses the sum over $x_1,\ldots,x_{|F|}$ to a sum over the remaining vertices only, and restricts $(a,b;l)$ to $E_k(F)$:
\begin{eqnarray*}
\frac{\partial t(F, \cdot, h)}{\partial \sfmat_{i, j;k}}  &&= \pi_i \pi_j
\sum_{(a, b) \in E_k(F)} \left\lbrace \sum_{\{x_s\}_{s\notin\{a,b\}} }^m  h'_k( \sfmat_{x_a x_b; k} )
 \prod_{(s,t;u) \in E(F) \setminus \{(a,b;k)\}} h_k(\sfmat_{x_s x_t ;u})  \right. \\ &&
+\left. (1-\delta_{i,j})\sum_{\{x_s\}_{s\notin\{a,b\}} }^m  h'_k( \sfmat_{x_a x_b; k} )
 \prod_{(s,t;u) \in E(F) \setminus \{(a,b;k)\}} h_k(\sfmat_{x_s x_t ;u}) \right \rbrace  \prod_{s \in V(F)} \pi_{x_s}
\end{eqnarray*}
which, by Definition~\ref{def:labeledgraphs} and Definition~\ref{def:partialbullets}, is exactly
\begin{eqnarray*}
\frac{\partial t(F, \cdot, h)}{\partial \sfmat_{i, j;k}}  &&= \pi_i \pi_j
( t_{ij;k}(\partial^{(\bullet \bullet)}F, (\sfmat,\pi),h) + (1-\delta_{i,j}) t_{ji;k}(\partial^{(\bullet \bullet)}F, (\sfmat,\pi),h) ) \\
  &&= \pi_i \pi_j (2-\delta_{i,j})   t_{ij;k}(\partial^{(\bullet \bullet)}F, (\sfmat,\pi),h).
\end{eqnarray*}
\end{proof}

\section{The solutions of $\optimalsolution$}
\label{sec:computability}

We now prove the RRS conjecture: the solutions of $\optimalsolution$ are step functions, provided certain regularity conditions hold. The proof rests on the principle of optimization of density functions stated in Section~\ref{sec:pod} -- but before we can state that principle, we need a few more definitions.


\subsection{Marginal Polytope}

\begin{definition}[Marginal map]
Let $\mathcal{F}$ be an ordered set of quantum graphs and let $h$ be a $|\mathcal{F}| \times r$ matrix of real analytic functions. The marginal map $t(\mathcal{F},\cdot, h):\realgraphonspacel \to \mathbb{R}^{|\mathcal{F}|}$ is defined by
\begin{equation*}
    t(\mathcal{F},W,h) = (t(\mathcal{F}_1,W,h_1), \cdots, t(\mathcal{F}_n,W,h_n)  ),
\end{equation*}
where $h_i$ denotes row $i$ of $h$. We write $\{t(\mathcal{F}, \cdot, h)\}$ for the set of subgraph densities derived from $\mathcal{F}$ and $h$, i.e.\ $\{t(\mathcal{F}_1, \cdot, h_1), \cdots, t(\mathcal{F}_n, \cdot, h_n) \}$, and $J_{(\sfmat,\pi)}(\mathcal{F},h)$ for the Jacobian of $t(\mathcal{F},\cdot,h)$. When $h$ is a matrix of identity functions, we omit it.
\end{definition}

The Jacobian $J_{(\sfmat,\pi)}(\mathcal{F},h)$ has a property we will lean on repeatedly: whether it is full rank does not depend on which representation of a step function we use.

\begin{restatable}{lemma}{lemjacobianrefinement}
\label{lem:jacobianrefinement}
Let $t(\mathcal{F}, \cdot,h): \realpodalfunctionspacel{m} \to \mathbb{R}^{|\mathcal{F}|}$ and $(\sfmat,\pi) \in \realpodalfunctionspacel{m}$. Then $J_{(\sfmat,\pi)} (\mathcal{F},h)$  is full rank iff $J_{\theta((\sfmat,\pi), \lambda,k)} (\mathcal{F},h)$ is full rank for any $k \in [m]$ and $\lambda \in (0,1)$.
\end{restatable}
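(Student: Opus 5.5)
The plan is to prove the two inequalities $\operatorname{rank} J_{(\sfmat,\pi)}(\mathcal{F},h) \le \operatorname{rank} J_{\theta((\sfmat,\pi),\lambda,k)}(\mathcal{F},h)$ and its reverse separately. Both rest on the remark after Definition~\ref{def:refinementoperator}: $\theta(\cdot,\lambda,k)$ does not change the step function. Hence
\[
t(\mathcal{F},\theta((\sfmat,\pi),\lambda,k),h)=t(\mathcal{F},(\sfmat,\pi),h)
\]
identically on $\realpodalfunctionspacel{m}$, and in particular $t_i$ and $t_{ij;k}$ agree across representations once indices are matched. Throughout, $x=(\sfmat,\pi)$ and $x'=\theta(x,\lambda,k)$. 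Full rank means rank $|\mathcal{F}|$, since the number of coordinates exceeds $|\mathcal{F}|$ in the regime of interest.

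\textbf{Direction "$\Rightarrow$".} The map $\theta(\cdot,\lambda,k)$ is linear, hence smooth, in $(\sfmat,\pi)$. Differentiating the identity above by the chain rule gives
\[
J_x(\mathcal{F},h)=J_{x'}(\mathcal{F},h)\, D\theta_x .
\]
Therefore the row space (equivalently, the rank) of $J_x$ is bounded by that of $J_{x'}$. If $J_x$ has full row rank $|\mathcal{F}|$, so does $J_{x'}$.

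\textbf{Direction "$\Leftarrow$".} Here I will show that every column of $J_{x'}$ lies in the column space of $J_x$. I compute the columns explicitly.

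For the $\sfmat$-coordinates I use Theorem~\ref{thm:partialderivatives}. The labeled densities satisfy $t_{i'j';u}(\partial_u^{(\bullet\bullet)}F,x',h)=t_{ij;u}(\partial_u^{(\bullet\bullet)}F,x,h)$, where $i,j$ are the original blocks containing $i',j'$. This holds because fixing a labeled vertex in a sub-block of block $l$ yields the same function value, and the free vertices are summed against the refined weights, which add back up to $\pi_k$.

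So the column for an entry $(i',j')$ of $x'$ equals the column for $(i,j)$ of $x$ times a scalar. The scalar is $\pi'_{i'}\pi'_{j'}(2-\delta_{i'j'})/(\pi_i\pi_j(2-\delta_{ij}))$. For example, the entries $(k,k)$, $(k,k+1)$, $(k+1,k+1)$ carry factors $\lambda^2$, $2\lambda(1-\lambda)$ and $(1-\lambda)^2$ relative to the column for $(k,k)$ of $x$.

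For the $\pi$-coordinates I first need the partial derivative in $\pi$, which has not been stated yet. Directly from the finite-sum formula for $t(F,(\sfmat,\pi),h)$ one gets $\partial t/\partial\pi_i = t_i(\partial^{\bullet}F,(\sfmat,\pi),h)$. The same invariance argument gives $t_{i'}(\partial^{\bullet}F,x',h)=t_i(\partial^{\bullet}F,x,h)$.

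Since $\pi$ lives on the simplex $\convexcombm{m}$, the admissible directions are differences $e_{i'}-e_{j'}$, whose columns are $t_{i'}-t_{j'}$. Each such difference is either zero (both indices in the split block) or equal to an original difference $t_i-t_j$. Hence every column of $J_{x'}$ is a linear combination of columns of $J_x$, so $\operatorname{rank} J_{x'}\le \operatorname{rank} J_x$. Combined with the first direction, the ranks are equal.

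\textbf{Main obstacle.} The step I expect to be most delicate is the $\pi$-part. Two things must be handled carefully there. First, the derivative formula $\partial t/\partial\pi_i = t_i(\partial^{\bullet}F,\cdot,h)$ must be derived cleanly for quantum graphs, using linearity, and for the product over all vertices carrying $\pi_{x_s}$. Second, the tangent directions of $\convexcombm{m}$ must be treated consistently under the coordinate convention $\mathbb{R}^{r\frac{m(m-1)}{2}}\times\convexcombm{m}$; I would fix a chart, for example eliminating $\pi_m$. The $(2-\delta)$ bookkeeping on the diagonal block is routine but easy to get wrong, so I will check it against the $3\to4$ example of Figure~\ref{fig:refinementmap}.
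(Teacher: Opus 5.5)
Your proof is correct, and its core is the same as the paper's. Both rest on the fact that the labeled densities $t_{ij;k}(\partial_k^{(\bullet\bullet)}\mathcal{F}_s,\cdot,h)$ do not change under refinement once indices are matched to the original blocks. Hence every $\sfmat$-column of the refined Jacobian is a scalar multiple of an $\sfmat$-column of the original one.

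The paper uses this single column comparison for both directions at once, after dividing out the weights $\pi_i\pi_j$ (which tacitly assumes $\pi_i>0$). It also only treats the $\sfmat$-coordinates, i.e.\ effectively the Jacobian of the fixed-$\pi$ map $t(\mathcal{F},(\cdot,\pi),h)$, which is the form Theorem~\ref{thm:pi_smoothness} later invokes.

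Your route differs in two worthwhile ways:
\begin{itemize}
\item You get $\operatorname{rank}J_x\le\operatorname{rank}J_{x'}$ for free from the chain rule applied to the polynomial identity $t(\mathcal{F},\theta(\cdot,\lambda,k),h)=t(\mathcal{F},\cdot,h)$. This needs no formulas and no positivity of $\pi$.
\item You also handle the simplex directions in $\pi$. The literal statement, a Jacobian on all of $\realpodalfunctionspacel{m}$, requires this, and the paper's proof omits it.
\end{itemize}

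Your scalar bookkeeping is also more precise than the paper's claim that the $g_{i,j;k,s}$ are literally equal. Because of the factor $(2-\delta_{i,j})$, the new off-diagonal entry inside the split block gives $2\lambda(1-\lambda)$ times the old diagonal column. This is harmless for the rank.

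Two small remarks:
\begin{itemize}
\item When $\pi_i\pi_j=0$ you cannot divide by it. In that case the refined column is zero, so it lies trivially in the column space.
\item Your argument only uses that the coefficient multiplying $\pi_{i'}\pi_{j'}$ depends on $(i',j')$ through the original blocks. So it survives unchanged if one keeps the unsymmetrized form $t_{ij;k}+(1-\delta_{i,j})t_{ji;k}$ from the proof of Theorem~\ref{thm:partialderivatives}, where the two orientations of a labeled edge need not give equal values in general.
\end{itemize}
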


\begin{proof}
Since the step functions are symmetric under permutation of row/columns, we may assume $k=m$. We compute the Jacobian matrix of $t(\mathcal{F}, \cdot, h)$ at $(\sfmat,\pi)$; it has size $|\mathcal{F}| \times r\frac{m(m-1)}{2}$.
\begin{equation*}
J_{(\sfmat,\pi)}(\mathcal{F},h) = \begin{pmatrix}
\frac{\partial t(\mathcal{F}_1,\cdot,h)}{\partial \sfmat_{1,1;1}}|_{(\sfmat,\pi)} & \frac{\partial t(\mathcal{F}_1,\cdot,h)}{\partial \sfmat_{1,2;1}}|_{(\sfmat,\pi)} & \cdots & \frac{\partial t(\mathcal{F}_1,\cdot,h)}{\partial \sfmat_{m,m;r}}|_{(\sfmat,\pi)} \\
\frac{\partial t(\mathcal{F}_2,\cdot,h)}{\partial \sfmat_{1,1;1}}|_{(\sfmat,\pi)} & \frac{\partial t(\mathcal{F}_2,\cdot,h)}{\partial \sfmat_{1,2;1}}|_{(\sfmat,\pi)} & \cdots & \frac{\partial t(\mathcal{F}_2,\cdot,h)}{\partial \sfmat_{m,m;r} }|_{(\sfmat,\pi)} \\
\vdots & \vdots & \ddots & \vdots \\
\frac{\partial t(\mathcal{F}_n,\cdot,h)}{\partial \sfmat_{1,1;1}}|_{(\sfmat,\pi)} & \frac{\partial t(\mathcal{F}_n,\cdot,h)}{\partial \sfmat_{1,2;1}}|_{(\sfmat,\pi)} & \cdots & \frac{\partial t(\mathcal{F}_n,\cdot,h)}{\partial \sfmat_{m,m;r} }|_{(\sfmat,\pi)}
\end{pmatrix}
\end{equation*}
Using the partial-derivative formula $\frac{\partial t(\mathcal{F}_s,\cdot,h)}{\partial \sfmat_{i,j;k}}$ of (\ref{eq:1partialderstep}),
\begin{equation*}
J_{(\sfmat,\pi)}(\mathcal{F},h) = \begin{pmatrix}
\pi^2_1 g_{1,1;1,1}((\sfmat,\pi))
 & \pi_1 \pi_2 g_{1,2;2,1}((\sfmat,\pi))  & \cdots & \pi^2_m g_{m,m;r,1}((\sfmat,\pi))  \\
\pi^2_1 g_{1,1;1,2}((\sfmat,\pi)) & \pi_1 \pi_2 g_{1,2;2,2}((\sfmat,\pi))  & \cdots & \pi^2_m g_{m,m;r,2}((\sfmat,\pi))  \\
\vdots & \vdots & \ddots & \vdots \\
\pi^2_1 g_{1,1;1,n}((\sfmat,\pi))  & \pi_1 \pi_2 g_{1,2;2,n}((\sfmat,\pi)) & \cdots & \pi^2_m g_{m,m;r,n}((\sfmat,\pi))
\end{pmatrix}
\end{equation*}
where $g_{i,j;k,s}((\sfmat,\pi))=(2-\delta_{i,j})t_{ij;k}\partial^{(\bullet \bullet)}_k\mathcal{F}_s, (\sfmat,\pi),h)$, so $J_{(\sfmat,\pi)}(\mathcal{F},h)$ is full rank iff $J_m$ below is full rank.
\begin{equation*}
J_m = \begin{pmatrix}
 g_{1,1;1,1}((\sfmat,\pi))
 & g_{1,2;2,1}((\sfmat,\pi))  & \cdots &  g_{m,m;r,1}((\sfmat,\pi))  \\
 g_{1,1;1,2}((\sfmat,\pi)) &  g_{1,2;2,2}((\sfmat,\pi))  & \cdots &  g_{m,m;r,2}((\sfmat,\pi))  \\
\vdots & \vdots & \ddots & \vdots \\
 g_{1,1;1,n}((\sfmat,\pi))  &  g_{1,2;2,n}((\sfmat,\pi)) & \cdots &  g_{m,m;r,n}((\sfmat,\pi))
\end{pmatrix}
\end{equation*}
Now compute $J_{\theta((\sfmat,\pi), \lambda,m)}(\mathcal{F},h)$, dropping the factors $\pi_i \pi_j$ that accompany $\frac{\partial t(\mathcal{F}_s,\cdot,h)}{\partial \sfmat_{i,j;k}}$, and abbreviate $\theta((\sfmat,\pi),\lambda,m)$ by $\theta((\sfmat,\pi),\lambda)$:
\begin{equation*}
J_{m+1} = \begin{pmatrix}
 g_{1,1;1,1}(\theta((\sfmat,\pi),\lambda))
 & g_{1,2;2,1}(\theta((\sfmat,\pi),\lambda))  & \cdots &  g_{m+1,m+1;r,1}(\theta((\sfmat,\pi),\lambda))  \\
 g_{1,1;1,2}(\theta((\sfmat,\pi),\lambda)) &  g_{1,2;2,2}(\theta((\sfmat,\pi),\lambda))  & \cdots &  g_{m+1,m+1;r,2}(\theta((\sfmat,\pi),\lambda))  \\
\vdots & \vdots & \ddots & \vdots \\
 g_{1,1;1,n}(\theta((\sfmat,\pi),\lambda))  &  g_{1,2;2,n}(\theta((\sfmat,\pi),\lambda)) & \cdots &  g_{m+1,m+1;r,n}(\theta((\sfmat,\pi),\lambda))
\end{pmatrix}
\end{equation*}
Since $g_{i,j;k,s}((\sfmat,\pi)) = g_{i,j;k,s}(\theta((\sfmat,\pi),\lambda)$ for every $\lambda \in [0,1]$, the columns of $J_{m+1}$ simply repeat those of $J_m$, so $J_m$ is full rank iff $J_{m+1}$ is full rank.
\end{proof}

With the marginal map in hand, we can name the set it sweeps out, and the regions it cuts out of the graphon space.

\begin{definition}[Marginal Polytope]
The marginal polytope $\marginalpolytopemh{m}$ of $\mathcal{F}$ is the image of $t(\mathcal{F},\cdot, h)$ on 
$\podalfunctionspacel{m}$, i.e., $\marginalpolytopemh{m}= t(\mathcal{F},\podalfunctionspacel{m},h)$. We also write $\marginalpolytopeh=t(\mathcal{F},\graphonspacel,h)$ for the total marginal polytope.

\end{definition}

\begin{definition}[Constrained regions]
For $u \in \marginalpolytopeh$, the level sets of $t(\mathcal{F}, \cdot,h)$ are exactly the regions constrained by the subgraph densities $u$:
\begin{equation*}
    \realfeasibleregionlh{h} = t^{-1}(\mathcal{F},u,h) = \{W \in \realgraphonspacel \, :  \, t(\mathcal{F},W,h)=u\}
\end{equation*}
and
\begin{equation*}
\realfeasibleregionmlh{m}{h} = \realfeasibleregionlh{h} \cap \realpodalfunctionspacel{m} 
\end{equation*}
and, fixing $\pi \in \convexcombm{m}$,
\begin{equation*}
\realfeasibleregionmlpi{m}{\pi, h} =  \realfeasibleregionmlh{m}{h} \cap \realpodalfunctionspacelpi{m}{\pi}  
\end{equation*}
\end{definition}

\subsubsection{Non-empty interior of $\marginalpolytopemh{m}$}

These constrained regions are only as well behaved as the marginal polytope they sit inside: to guarantee that $\realfeasibleregionmlh{m}{h}$ is an analytic manifold for almost every $u \in \marginalpolytopemh{m}$, we need $u$ to be a regular value of $t(\mathcal{F}, \cdot, h)$ (Sard's theorem, Theorem~\ref{thm:sardstheorem}, supplies "almost every" once $\marginalpolytopemh{m}$ has non-empty interior), and a non-empty interior is in turn guaranteed once the functions $\{t(\mathcal{F}_i, \cdot,[h_i]): \realpodalfunctionspacel{m} \to \mathbb{R} \}$ are analytically independent.

\begin{definition}[Independent analytical functions]
Let $AF=\{f_1, \cdots, f_n\}$ be an ordered, finite set of analytic functions $f_i:\realpodalfunctionspacel{m} \to \mathbb{R}$. We say $AF$ is a set of \emph{independent analytic functions} if, for every $m$ with $r\frac{m(m+1)}{2}  > |AF|$, there is some $(\sfmat,\pi) \in \realpodalfunctionspacel{m}$ with $\pi \in (\convexcombm{m})^\circ$ such that $J_{(\sfmat,\pi)}(f_1,\cdots, f_n)$ is full rank.
\end{definition}

\subsection{Smoothness of $\realfeasibleregionmlh{m}{h}$}
\label{sec:smoothness}

This section establishes sufficient conditions guaranteeing that $\realfeasibleregionmlh{m}{h}$ is a smooth manifold.

\begin{definition}[Regular and Critical Values of $t(\mathcal{F}, \cdot, h)$]
\label{def:regularvalue}
 Let $\cristats$ be the set of critical values of the map  $t(\mathcal{F},\cdot,h): \realpodalfunctionspacel{m}\to \mathbb{R}^{|\mathcal{F}|}$. Then, the set of regular values is defined by
\begin{equation}
\label{eq:regularvalues}
\regstats = \cup_{m >0} \marginalpolytopemh{m} \setminus \cup_{m >0}  \cristats
\end{equation}
and
\begin{equation*}
 \regstatsm{m}= \marginalpolytopemh{m} \cap \regstats.
\end{equation*}
It is also convenient to single out the \emph{non-extremal} values of $\mathcal{F}$,
\begin{equation*}
 \goodvaluesm{m}= \regstatsm{m} \setminus\cup_{m'>0}^m \partial T^{(m',r)}(\mathcal{F},h)
\end{equation*}
and $\goodvalues = \cup_{m>0} \goodvaluesm{m}$.

\end{definition}

\begin{remark}
From Theorem \ref{thm:denseinterior}, if $\{t(\mathcal{F}_i, \cdot, h)\}$ is a set of independent analytic functions and $r\frac{m(m+1)}{2}  > |\mathcal{F}| $, then $\goodvaluesm{m}^\circ$ is dense in $\marginalpolytopemh{m}$, hence $\goodvalues^\circ$ is dense in $\marginalpolytopeh$. In other words, a randomly chosen $u \in \goodvaluesm{m}$ yields a $\realfeasibleregionmlh{m}{h}$ with only regular points, with probability $1$.
\end{remark}


\begin{definition}
\label{def:m0}
Let $\mathcal{F}$ be an ordered set of quantum graphs and let $h$ be a $|\mathcal{F}| \times r$ matrix of analytic functions. Let $u \in \marginalpolytopeh$. The initial size of step functions we will need is
\begin{equation*}
    m_0(\mathcal{F}, u, h ) = \min \{ m :  r\frac{m(m+1)}{2}  > |\mathcal{F}|  \mbox{ and } u \in \marginalpolytopemh{m}^\circ \},
\end{equation*}
and $m_0(\mathcal{F}) = \min \{ m :  r\frac{m(m+1)}{2}  > |\mathcal{F}| \}$. It is also convenient to define
\begin{equation*}
    m_1(\mathcal{F}, u, h ) = \min \{ m :  r\frac{m(m+1)}{2}  > |\mathcal{F}| + 1 \mbox{ and } u \in \marginalpolytopemh{m}^\circ \}.
\end{equation*}
It is clear that $m_1(\mathcal{F}, u, h ) \geq m_0(\mathcal{F}, u, h ) \geq m_0(\mathcal{F}) $.
\end{definition}

\noindent The next lemma confirms that, at the smallest admissible size $m_0(\mathcal{F})$, step functions are already parametrized without repeated rows or columns.
\begin{restatable}{lemma}{lemmzero}
\label{lem:m0}
\quad
\begin{itemize}
    \item If $m_0=m_0(\mathcal{F})$ and $u \in \regstats$ is a regular value of $t(\mathcal{F}, \cdot,h)$ then $\realfeasibleregionmlh{m_0}{h}$ has only step functions with non repetitive row/columns everywhere.
    \item If $m_0=m_0(\mathcal{F}, u,h )$ and $u \in \regstats$ is a regular value of $t(\mathcal{F}, \cdot,h)$ then $\realfeasibleregionmlh{m_0}{h}$ has only step functions with non repetitive row/columns everywhere.
\end{itemize}
\end{restatable}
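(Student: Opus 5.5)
The argument is by contradiction, and both items are handled the same way. Suppose some $(\sfmat,\pi)\in\realfeasibleregionmlh{m_0}{h}$ has two identical row/columns. Applying the coarsening operator $\psi$ once, merging one repeated pair, gives an $(m_0-1)$-step function $(\sfmat',\pi')$ and some $\lambda\in(0,1)$ with
\[
(\sfmat,\pi)=\theta((\sfmat',\pi'),\lambda,l),
\]
up to a permutation of rows and columns, which changes nothing. If one of the merged weights is zero, I instead treat the point as lying on the boundary face $\realpodalfunctionspacel{m_0-1}=\partial\realpodalfunctionspacel{m_0}$. In both cases $t(\mathcal{F},(\sfmat',\pi'),h)=t(\mathcal{F},(\sfmat,\pi),h)=u$, because refinement does not change the function.

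\emph{Full rank at $(\sfmat',\pi')$.} Since $u\in\regstats$, $u$ is not a critical value of $t(\mathcal{F},\cdot,h)$ on $\realpodalfunctionspacel{m}$ for any $m$. In particular every point of the $m_0$-level set is regular, so $J_{(\sfmat,\pi)}(\mathcal{F},h)$ has full rank $|\mathcal{F}|$. Lemma~\ref{lem:jacobianrefinement} then gives that $J_{(\sfmat',\pi')}(\mathcal{F},h)$ is also full rank. This is consistent with $u$ being a regular value at level $m_0-1$, since the definition of $\regstats$ excludes critical values at every size.

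\emph{Dimension count.} The Jacobian at size $m_0-1$, taken with respect to the matrix entries used in Lemma~\ref{lem:jacobianrefinement}, has at most $r\frac{(m_0-1)m_0}{2}$ independent columns. By minimality of $m_0$ in Definition~\ref{def:m0}, $r\frac{(m_0-1)m_0}{2}\le|\mathcal{F}|$. This alone does not contradict full rank $|\mathcal{F}|$ when equality holds, so the plan has to sharpen the count. Lemma~\ref{lem:jacobianrefinement} shows that the columns of $J_{m_0}$ at the refined point are repetitions of the columns of $J_{m_0-1}$. Hence the rank at $(\sfmat,\pi)$, counting the $\sfmat$-directions, is at most the number of distinct columns, namely $r\frac{(m_0-1)m_0}{2}$. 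I would then argue that regularity of $u$ in the sense of the paper forces this rank to equal $|\mathcal{F}|$ and the level set to have positive dimension $r\frac{m_0(m_0+1)}{2}-|\mathcal{F}|$. At a point in the image of $\theta$, the tangent directions realizable within the refined locus are too few to match that. The comparison is between the dimension of the stratum of repeated-row functions, which is $\dim\realpodalfunctionspacel{m_0-1}+1$, and the dimension of the manifold.

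\emph{Second item and main obstacle.} For the second item, $m_0(\mathcal{F},u,h)\ge m_0(\mathcal{F})$ and the same reasoning applies. The extra condition $u\in\marginalpolytopemh{m}^\circ$ guarantees that the level set is nonempty and that the interior-point condition for the $\pi$-variables holds. I expect the main obstacle to be exactly the equality case $r\frac{(m_0-1)m_0}{2}=|\mathcal{F}|$ of the dimension count, where full rank at $m_0-1$ is not immediately contradictory. There I would first try to use the $\pi$-derivatives. At a refined point, the derivative in the direction that moves mass between the two duplicate blocks vanishes, and this gives one more dependency among the columns. The remaining distinct columns then number strictly fewer than $|\mathcal{F}|$, which contradicts regularity.
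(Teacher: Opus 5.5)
Your route is the paper's own: coarsen the repeated-row point, use regularity of $u$ and Lemma~\ref{lem:jacobianrefinement} to get full rank at the coarse point, and invoke minimality of $m_0$. The paper stops at ``hence $m_0$ was not actually the minimal number of steps''. As you noticed, that does not follow. $m_0(\mathcal{F})$ is defined by the strict counting inequality $r\frac{m(m+1)}{2}>|\mathcal{F}|$ alone, and full rank at size $m_0-1$ only forces $r\frac{(m_0-1)m_0}{2}=|\mathcal{F}|$.

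The genuine gap is that this equality case cannot be closed, because the statement is false there. Take $r=1$, $\mathcal{F}=\{E\}$ a single edge, and $h=\mathrm{id}$. Then $t(E,(\sfmat,\pi))=\sum_{i,j}\sfmat_{ij}\pi_i\pi_j$ and $\partial t/\partial\sfmat_{ii}=\pi_i^2$, which is nonzero for some $i$ since $\sum_i\pi_i=1$. So $t$ has no critical points on any $\realpodalfunctionspacel{m}$, and every $u\in(0,1)$ lies in $\regstats$ (even in $\goodvalues$). Here $m_0(\mathcal{F})=m_0(\mathcal{F},u,h)=2$. Yet $\realfeasibleregionmlh{2}{h}$ contains the constant step function $\sfmat_{11}=\sfmat_{12}=\sfmat_{22}=u$ (for every $\pi$), whose two rows coincide. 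Both bullets fail, and $r\frac{(m_0-1)m_0}{2}=1=|\mathcal{F}|$ is exactly your equality case.

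So your proposed repairs cannot succeed, and the example shows why. The vanishing derivative along the mass-transfer direction $\lambda$ is just a zero column. It imposes no linear relation among the remaining columns, which in the equality case can have rank $|\mathcal{F}|$; in the example the Jacobian at a constant point is the nonzero row $(\pi_1^2,\,2\pi_1\pi_2,\,\pi_2^2)$. Nor does the stratum-dimension comparison give a contradiction: a regular level set can contain a lower-dimensional set of refined points (here, a curve of constant functions).

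What the coarsening argument does prove is the strict case. If $r\frac{(m_0-1)m_0}{2}<|\mathcal{F}|$ and regularity is measured in the $\sfmat$-directions as in Lemma~\ref{lem:jacobianrefinement}, every point of $\realfeasibleregionmlh{m_0-1}{h}$ would be critical. That set is then empty, and nothing coarsens into it. In general, coarsening and refinement (of a block of positive weight, with $\lambda\in(0,1)$) pass back and forth between the two levels. Hence $\realfeasibleregionmlh{m_0}{h}$ contains a point with two identical rows iff $u\in t(\mathcal{F},\realpodalfunctionspacel{m_0-1},h)$. Excluding this, not regularity of $u$, is the hypothesis the statement needs. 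If $\pi$-directions also count toward regularity, as the definition of critical values on $\realpodalfunctionspacel{m}$ suggests, the coarse Jacobian gains $m_0-2$ columns, and even the strict case is no longer settled by counting.
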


\begin{proof}
By contradiction. Let $(\sfmat, \pi) \in  \realfeasibleregionmlh{m}{h}$ be a step function with repeated rows/columns in positions $m-1$ and $m$. Coarsening it gives an $(m-1)$-step function $(\sfmat',\pi')$, where $\sfmat'$ is obtained by deleting the last row/column of $\sfmat$ and $\pi'$ by
\begin{equation*}
\pi'_i = \begin{cases} \pi_i & i < m-1 \\
\pi_{m-1}+\pi_m & i = m-1
\end{cases}
\end{equation*}
For the first case: by construction $(\sfmat',\pi') \in \realfeasibleregionmlh{m-1}{h}$, and since $u \in \regstats$, the Jacobian $J_{(\sfmat',\pi')}(\mathcal{F},h)$ is full rank. Hence $m_0=m_0(\mathcal{F})$ was not actually the minimal number of steps -- a contradiction.

The second case is identical.
\end{proof}

Once $m \geq m_0(\mathcal{F},u,h)$ and $\{t(\mathcal{F}, \cdot, h)\}$ is a set of independent analytic functions, $\marginalpolytopemh{m}$ has non-empty interior; analytic continuation together with Sard's theorem then makes $u \in \marginalpolytopemh{m}$ a regular value for almost every $u$. Picking $u \in \marginalpolytopemh{m}$ at random, in other words, gives a smooth manifold $\realfeasibleregionmlh{m}{h}$ with probability $1$. The next theorem upgrades this from the full marginal polytope down to each fixed-$\pi$ slice $\realfeasibleregionmlpi{m}{\pi,h}$.

\begin{restatable}{theorem}{thmpismoothness}
\label{thm:pi_smoothness}
If $u \in \regstats$ and $m \geq m_0(\mathcal{F},u,h)$ then all points of $ \realfeasibleregionmlpi{m}{\pi,h}$ for any $\pi \in (\convexcombm{m})^\circ$ are regular points of $t(\mathcal{F}, \cdot,h)$ hence $ \realfeasibleregionmlpi{m}{\pi,h}$ is an analytic manifold.
\end{restatable}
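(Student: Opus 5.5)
The plan is to read ``regular point'' in the only sense that matters for the slice: a point $(\sfmat,\pi)$ is regular for the restricted map $t(\mathcal{F},\cdot,h)|_{\realpodalfunctionspacelpi{m}{\pi}}$ exactly when the Jacobian of $t(\mathcal{F},\cdot,h)$ with respect to the $\sfmat$-coordinates alone, $J_{(\sfmat,\pi)}(\mathcal{F},h)$ as written in the proof of Lemma~\ref{lem:jacobianrefinement}, has rank $|\mathcal{F}|$. Once this is established at every point of $\realfeasibleregionmlpi{m}{\pi,h}$, the analytic version of Theorem~\ref{cor:regular_value_manifold} applied to the analytic map $\sfmat \mapsto t(\mathcal{F},(\sfmat,\pi),h)$ on $\mathbb{R}_{sym}^{(m\times m)^r}$ shows that the level set $u$ is an analytic submanifold of codimension $|\mathcal{F}|$. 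So everything reduces to a rank statement.

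To get the rank, fix $(\sfmat,\pi) \in \realfeasibleregionmlpi{m}{\pi,h}$ with $\pi$ interior. Because $\pi_i>0$ for every $i$, Theorem~\ref{thm:partialderivatives} lets us factor out the nonzero column scalings $\pi_i\pi_j$. As in the proof of Lemma~\ref{lem:jacobianrefinement}, the $\sfmat$-Jacobian then has full rank iff the matrix $J_m$ of the quantities $g_{i,j;k,s}=(2-\delta_{i,j})t_{ij;k}(\partial_k^{(\bullet\bullet)}\mathcal{F}_s,(\sfmat,\pi),h)$ has full rank. Next I coarsen with $\psi$ to $(\sfmat',\pi')=\psi((\sfmat,\pi))$, which has no repeated rows and is the same graphon, so it still lies in $S_{\mathbb{R}}(\mathcal{F},u,h)$. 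By Lemma~\ref{lem:jacobianrefinement}, applied repeatedly along the refinements that rebuild $(\sfmat,\pi)$ from $(\sfmat',\pi')$ (up to a permutation of indices, which only permutes columns), the $g$-columns at $(\sfmat,\pi)$ are repetitions of those at $(\sfmat',\pi')$. The ranks therefore agree, and it suffices to show full rank at $(\sfmat',\pi')$, whose partition vector is again strictly positive.

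The key step, and the one I expect to be the main obstacle, is deducing full rank of the $\sfmat'$-only Jacobian from $u \in \regstats$. Regularity of $u$ only says that the full differential on $\realpodalfunctionspacel{m'}$ is surjective, and that differential also contains the $\pi$-directions. I would first try to show that the $\pi$-derivatives lie in the span of the $\sfmat$-derivatives. The natural tool is a refinement argument. Moving mass between blocks $i$ and $j$ can be realized within a larger step space by first refining, which does not change the function, and then changing only matrix entries of the split-off block so that it copies the rows of $j$. This produces a curve in a fixed-$\pi$ slice of $\realpodalfunctionspacel{m'+1}$ whose tangent image equals the $\pi$-derivative. By Lemma~\ref{lem:jacobianrefinement}, the column space of the $\sfmat$-Jacobian is unchanged under refinement, so the $\pi$-directions add no rank. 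Full rank of the total Jacobian, guaranteed by $u$ regular in every size $m$ (including $m'$ and $m'+1$), then forces full rank of the $\sfmat$-part.

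If this limiting argument proves delicate, for instance because the mass-transfer curve is only one-sided, the fallback is to use the hypothesis $m\ge m_0(\mathcal{F},u,h)$ together with Lemma~\ref{lem:m0}, which gives non-repetitive rows at size $m_0$. I would then argue by contradiction: a rank drop in the $\sfmat$-Jacobian at a point of the slice would, after refinement to size $m$, make $u$ a critical value of the map on $\realpodalfunctionspacel{m}$, contradicting $u\in\regstats$.
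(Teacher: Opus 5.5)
Your reduction to the rank of the $\sfmat$-only Jacobian, the removal of the factors $\pi_i\pi_j$, and the coarsening through Lemma~\ref{lem:jacobianrefinement} are fine. The argument breaks at the key step: the claim that the $\pi$-columns of the Jacobian lie in the span of the $\sfmat$-columns. Your mass-transfer curve does not give this. Suppose you split off from block $j$ a piece of weight $\lambda\pi_j$ and move its row/column from row $j$ to row $i$. Only the \emph{endpoint} of that curve represents the graphon with mass $\lambda\pi_j$ transferred. So $\lambda\pi_j\,\partial_{e_i-e_j}t$ is, to first order in $\lambda$, the secant $t(\gamma(1))-t(\gamma(0))$. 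The image of the initial velocity under $dt$ is instead a linearisation. It contains $h_k'(\sfmat_{jb;k})(\sfmat_{ib;k}-\sfmat_{jb;k})$ where the secant contains $h_k(\sfmat_{ib;k})-h_k(\sfmat_{jb;k})$. It also misses products of row differences coming from vertices of degree $\ge 2$: already for $h=\mathrm{id}$ and $F$ a $2$-star, the two differ by $\lambda\pi_j(d_i-d_j)^2$, with $d_i=\sum_b \sfmat_{ib}\pi_b$. Letting $\lambda\to 0$ shrinks the weight of the moving piece but not its displacement in $\sfmat$-coordinates, so no tangent identity emerges; a secant is not a vector in the column space of the Jacobian at a single point. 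Refinement invariance of the $\sfmat$-column space says nothing about where the $\pi$-column sits.

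The claim is in fact false, and with it your fallback and the statement itself under its stated hypotheses. Take $r=1$, $\mathcal{F}=\{K_2\}$ and $h(x)=\frac{x^3}{3}-\frac{x^2}{2}+\frac{3x}{16}$. Then $h'(x)=(x-\frac14)(x-\frac34)$, $h(\frac14)=\frac1{48}$, $h(\frac34)=0$ and $h([0,1])=[0,\frac1{48}]$. Let $m=2$, $\sfmat=\begin{pmatrix}1/4&1/4\\1/4&3/4\end{pmatrix}$ and $\pi=(p,1-p)$.

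\begin{itemize}
\item Every $\sfmat$-partial $\pi_i\pi_j(2-\delta_{ij})h'(\sfmat_{ij})$ vanishes, while $t=\frac{2p-p^2}{48}$ has $\partial_p t=\frac{1-p}{24}\neq 0$.
\item By Sard, the critical values over all sizes form a null set. So for almost every $p\in(0,1)$ the value $u=\frac{2p-p^2}{48}$ lies in $\regstats$. It even lies in $\goodvalues$, since every $\marginalpolytopemh{m'}=[0,\frac1{48}]$, and $m_0(\mathcal{F},u,h)=2$.
\item Yet $(\sfmat,\pi)$ is a nondegenerate critical point of $\sfmat\mapsto t(\mathcal{F},(\sfmat,\pi),h)$, with Hessian of signature $(1,2)$ because $h''(\frac14)=-\frac12$ and $h''(\frac34)=\frac12$. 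By the Morse lemma, $\realfeasibleregionmlpi{2}{\pi,h}$ is locally a double cone, not a manifold.
\end{itemize}

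The full Jacobian there has rank one through $\partial_p t$, which is exactly why your fallback fails. A rank drop in the $\sfmat$-part need not make $u$ critical. Refining does not help either, since every refinement still has entries in $\{\frac14,\frac34\}$ and hence zero $\sfmat$-Jacobian.

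The paper argues differently. It applies Sard to the coordinate projections $\phi_i$ to choose splitting parameters $\lambda_i$, so that a refinement of the given point lands in a size-$2m$ slice of regular points, and then pulls full rank back with Lemma~\ref{lem:jacobianrefinement}. It meets the same obstruction: no refinement of this point can lie in such a slice. Sard only excludes a null set of partition vectors, and the refinements of one fixed point sweep out a null family, so the inductive choice cannot be made for a prescribed point.

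What does hold follows from Sard applied to the projection $\realfeasibleregionmlh{m}{h}\to(\convexcombm{m})^\circ$; a point is regular for this projection iff the $\sfmat$-Jacobian has full rank. This gives the conclusion for almost every interior $\pi$. The statement for every $\pi$ needs an additional hypothesis.
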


\begin{proof}
Let $\lambda^{(n)}$ be an ordered set of $n$ values in $(0,1)$, with $\lambda^{(0)}=\emptyset$. Let $SP^{(m,n,r)}(\mathcal{F}, u, h, \lambda^{(n)})$ be the region obtained from $\realfeasibleregionmlh{m+n}{h}$ by a partial refinement that fixes the first $n$ entries of $\pi$, i.e.
\begin{eqnarray*}
    SP^{(m,n,r)}(\mathcal{F}, u, h, \lambda^{(n)}) = \{ (\sfmat,\pi) \in \realfeasibleregionmlh{m+n}{h} : t(\mathcal{F},(\sfmat,\pi))=u \mbox{ and } \\ \{\pi_i=\lambda^{(n)}_i \pi_i \mbox{ and } \pi_{i+1}=(1-\lambda^{(n)})\pi\}_{i=1}^n\}.
\end{eqnarray*}
Define the map
\[
\phi_i:SP^{(m,i-1,r)}(\mathcal{F}, u, h, \lambda^{(i-1)}) \to (0,1) \quad \mbox{ by } \quad \phi_i((\sfmat,\pi)) = \pi_i
\]
with image $S_i^\pi=\phi(SP^{(m,i-1,r)}(\mathcal{F}, u, h, \lambda^{(i-1)}))$.

Pick any $(\sfmat,\pi) \in \realfeasibleregionmlh{m}{h}$; we construct $SP^{(m,m,r)}(\mathcal{F}, u, h, \lambda^{(m)})$ so that a refinement of $(\sfmat,\pi)$ lands in it, via the following inductive construction.
\begin{enumerate}
    \item $SP^{(m,0,r)}(\mathcal{F}, u, h, \emptyset) = \realfeasibleregionmlh{m}{h}$ is a smooth manifold since $u$ is a regular value.
    \item For each $i$ in $1,\cdots,m$
    \begin{enumerate}
        \item Pick a regular value $\pi'_i \in S_i^\pi \subseteq (\sum_{j=1}^{i-1} \pi_j, \sum_{j=1}^{i} \pi_j)$. Such a value exists because $SP^{(m,i-1,r)}(\mathcal{F}, u, h, \lambda^{(i-1)})$ is a smooth manifold of regular points, so $\phi_i$ is a smooth map, $S_i^\pi$ has non-empty interior, and by Sard's theorem the critical values of $\phi_i$ have zero measure in $S_i^\pi$.
        \item Let $\lambda_i  =\frac{\pi'_i}{\pi_i}$ and $\lambda^{(i)}=\lambda^{(i-1)} \cup \{\lambda_i\}$.
         \item By construction $SP^{(m,i,r)}(\mathcal{F}, u, h, \lambda^{(i)})$ is a smooth manifold of regular points satisfying $t(\mathcal{F},u,h)=u$.
    \end{enumerate}
\end{enumerate}
The resulting $SP^{(m,m,r)}(\mathcal{F}, u, h, \lambda^{(m)})$ is a smooth manifold of regular points in which the partition vector of every step function is fully determined, and $SP^{(m,m,r)}(\mathcal{F}, u, h, \lambda^{(m)})=\realfeasibleregionmlpi{2m}{\pi', h}$ where
\[
\pi' = \theta(\cdots \theta(\pi,\lambda^{(m)}_1,1),\lambda^{(m)}_2,3) \cdots, \lambda^{(m)}_m, 2m-1)
\]
and, by construction,
\[
(\sfmat',\pi') = \theta(\cdots \theta((\sfmat,\pi),\lambda^{(m)}_1,1),\lambda^{(m)}_2,3) \cdots, \lambda^{(m)}_m, 2m-1) \in SP^{(m,m,r)}(\mathcal{F}, u, h, \lambda^{(n)}).
\]
By Lemma \ref{lem:jacobianrefinement}, $J_{(\sfmat',\pi')}(\mathcal{F},h)$ is full rank iff $J_{(\sfmat,\pi)}(\mathcal{F},h)$ is, so $(\sfmat,\pi)$ is a regular point of $t(\mathcal{F}, (\cdot,\pi), h)$. Thus $ \realfeasibleregionmlpi{m}{\pi,h}$ is an analytic manifold.
\end{proof}

\subsection{The density function}

The entropy rate function we ultimately care about is, it turns out, just one instance of a more general object: a \emph{density function}, built from an arbitrary analytic function in exactly the way $I$ is built from $I_0$. Proving our results at this level of generality costs nothing extra and saves us from treating the entropy case separately later.

\begin{definition}[The density function]
\label{def;densitiy}
Let $f_0:\mathbb{R} \to \mathbb{R}$ be an analytic function. Then $f_d:\realgraphonspacel \to \mathbb{R}$  is defined by
\begin{equation*}
f_d(W)= \sum_{k=1}^r \int_{[0,1]^2} f_0(W_k(x_1, x_2)) dx_1 dx_2
\end{equation*}
\end{definition}
\noindent On $\realpodalfunctionspacel{m}$ the density function $f_d$ takes the form
\begin{equation}
\label{eq:generalratefunctionstep}
f_d((\sfmat,\pi))= \sum_{k=1}^r \sum_{x_1, x_2=1}^m f_0(\sfmat_{x_1,x_2;k}) \pi_{x_1}\pi_{x_2}
\end{equation}
and, true to the promise above, $f_d(W)$ is itself an $h$-subgraph density,
\begin{equation*}
    f_d(W) = t(E_s,W,[f_0])
\end{equation*}
where $E_s=\sum_{i=1}^r E_i$ is the linear combination of the single-edge graphs $E_i$, one per relation, and $[f_0]$ is $r$ copies of $f_0$ stacked into a vector.

We will work with two choices of $f_0$ throughout: an arbitrary analytic function on $\mathbb{R}$, and $I_0:[0,1] \to \mathbb{R}$ given by $I_0(u)=u\log_2(u)+(1-u)\log_2(1-u)$, which is analytic only on $(0,1)$. Both choices give density functions that are continuous in the $L^{1}(\prod_{k=1}^r [0,1]^2)$ topology -- a fact we record now because the global analysis of Section~\ref{sec:globalanalysis} leans on it directly.

\begin{restatable}{lemma}{lemcontinuity}
\label{lem:continuity}
\quad
\begin{enumerate}
    \item Let $f_d:\realgraphonspacel \to \mathbb{R}$ be the function defined by
    \begin{equation*}
        f_d(W) = \sum_{k=1}^r \int_{[0,1]^2} f_0(W_k(x_1, x_2)) dx_1 dx_2
    \end{equation*}
    where $f_0:\mathbb{R} \to \mathbb{R}$ is smooth. Then $f_d$ is continuous on $\realgraphonspacel$ in the $L^{1}( \prod_{k=1}^r [0,1]^2)$ topology.
    \item Let $I:\graphonspacel \to \mathbb{R}$ be the function defined by
    \begin{equation*}
        I(W) = \sum_{k=1}^r \int_{[0,1]^2} I_0(W_k(x_1, x_2)) dx_1 dx_2
    \end{equation*}
    where $I_0(x)= x\log(x) + (1-x) \log(1-x)$. Then $I$ is continuous in the $L^{1}( \prod_{k=1}^r [0,1]^2)$ topology when $\|W\|_\infty < 1$.
\end{enumerate}

\end{restatable}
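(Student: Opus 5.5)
The plan is a uniform-continuity-plus-Markov argument, applied coordinatewise in $k\in[r]$ and then summed over the finitely many relations. For part 2, and for part 1 on any set of graphons with a common bound $\|W\|_\infty\le M$, the argument is the same.

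Fix such a bounded range $K$, where $K=[-M,M]$ in part 1 and $K=[0,1]$ in part 2. Then $f_0$, respectively $I_0$, is continuous on the compact set $K$. (For $I_0$ we extend by $I_0(0)=I_0(1)=0$, which is its limiting value.) Hence it is bounded by some $C=\sup_K|f_0|$ and uniformly continuous on $K$. Given $\varepsilon>0$, choose $\delta>0$ with $|f_0(x)-f_0(y)|<\varepsilon$ whenever $x,y\in K$ and $|x-y|<\delta$.

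For $W,V$ with values in $K^r$, split $[0,1]^2$ according to the size of $|W_k-V_k|$:
\begin{equation*}
|f_d(W)-f_d(V)|\le \sum_{k=1}^r\Big(\int_{\{|W_k-V_k|<\delta\}}\varepsilon \;+\; \int_{\{|W_k-V_k|\ge\delta\}} 2C\Big).
\end{equation*}
Markov's inequality bounds the measure of $\{|W_k-V_k|\ge\delta\}$ by $\|W_k-V_k\|_1/\delta$. Therefore
\begin{equation*}
|f_d(W)-f_d(V)|\le r\varepsilon+\frac{2C}{\delta}\|W-V\|_1 ,
\end{equation*}
which is below $(r+1)\varepsilon$ once $\|W-V\|_1<\varepsilon\delta/(2C)$. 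This gives continuity, and in fact uniform continuity, on the relevant domain. An alternative route is to pass to an a.e.-convergent subsequence of an $L^1$-convergent sequence and apply dominated convergence with dominating constant $C$. The subsequence–subsequence argument then upgrades this to convergence along the full sequence.

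For part 2, the hypothesis $\|W\|_\infty<1$ is not actually needed by this argument, because $I_0$ is bounded and continuous on all of $[0,1]$. Under that hypothesis the graphon stays inside the region where $I_0$ is analytic, which is what later sections use; but continuity itself holds on all of $\graphonspacel$.

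The main obstacle is part 1 as literally stated, for unbounded real-valued graphons. There, $L^1$ convergence does not control $\int f_0(W)$: with $f_0(x)=x^2$, take $W=n\mathbbm{1}_{[0,n^{-3/2}]^2}$, which tends to $0$ in $L^1$ while $f_d(W)=n^2\cdot n^{-3}\cdot n^{0}$ can be made to blow up by adjusting exponents. So I would prove part 1 on each $L^\infty$-ball, as above, or under a growth condition $|f_0(x)|\le a+b|x|$ using Vitali's theorem. I would then note that this local-boundedness version is all that the global analysis requires, since the step functions considered there lie in bounded regions.
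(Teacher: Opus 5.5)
Your argument is correct, and it takes a genuinely different route from the paper's.

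\textbf{How the paper argues.} The paper uses a Lipschitz / mean-value bound.
\begin{itemize}
\item In part 1 it invokes ``the Lipschitz condition on $f_0$'' to get $|f_d(W)-f_d(V)|\le C\|W-V\|_1$.
\item In part 2 it bounds $|I_0'|$ over the ranges of $W$ and $V$ by $|I_0'(\Delta(W,V))|$, where $\Delta(W,V)$ is the distance of the values of $W$ and $V$ to $\{0,1\}$.
\end{itemize}
Neither step holds under the stated hypotheses. A smooth $f_0$ is only locally Lipschitz, so part 1 as literally stated is false, exactly as you observe. In part 2, an $L^1$-neighbourhood of $W$ gives no $L^\infty$ control on $V$. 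So $\Delta(W,V)$ can be $0$ and the ``constant'' $|I_0'(\Delta)|$ is infinite; moreover, $\|W\|_\infty<1$ does not keep $W$ away from $0$ in the first place.

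\textbf{What your route buys.} Your uniform-continuity-plus-Markov argument uses only continuity of $f_0$ on a compact range, not a derivative bound.
\begin{itemize}
\item It proves part 1, even uniformly, on every $L^\infty$-ball.
\item It proves part 2 on all of $\graphonspacel$ with no need for $\|W\|_\infty<1$. This works because $I_0$ extends continuously to $[0,1]$, even though it is not Lipschitz there.
\end{itemize}
The paper's approach would give an explicit Lipschitz modulus, but only when its hidden hypotheses actually hold: a globally Lipschitz $f_0$, or $W,V$ with values in a fixed $[\epsilon,1-\epsilon]$. Yours gives only a modulus of continuity, which is all the lemma is used for. Your Vitali variant under linear growth is also correct.

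\textbf{Your counterexample needs fixing.} As written it does not blow up: $W=n\mathbbm{1}_{[0,n^{-3/2}]^2}$ gives $\|W\|_1=n^{-2}$ and $f_d(W)=n^{-1}\to 0$. A clean instance is $r=1$, $f_0(x)=x^2$, $W_n=n\mathbbm{1}_{[0,1/n]^2}$. Then $\|W_n\|_1=1/n\to 0$ while $f_d(W_n)=1\neq f_d(0)$.

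\textbf{Your closing remark needs a qualification.} The claim that the bounded version suffices downstream holds only for bounded competitors. In the proof of Theorem~\ref{thm:POD}, the lemma is applied at a possibly non-step $W_1\in\realfeasibleregionlh{h}$.
\begin{itemize}
\item If $\|W_1\|_\infty\le M$, its block-average step approximations stay in the same $L^\infty$-ball, so your version applies.
\item In the RRS setting everything is $[0,1]$-valued, so your part 2 covers it.
\item For unbounded real-valued competitors, neither your argument nor the paper's applies.
\end{itemize}
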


\begin{proof}

Let $W \in  \realgraphonspacel$ and $V \in U$, where $U$ is a sufficiently small open neighborhood of $W$. Both cases reduce to exhibiting a constant $C >0$ such that
\begin{equation*}
|f_d(W) - f_d(V)| \leq  C \|W-V \|_1 \mbox{ and }  |I(W) - I(V)| \leq  C \|W-V \|_1
\end{equation*}
For the first case,
\begin{eqnarray*}
&& |f_d(W) - f_d(V)|= \left| \sum_{k=1}^r \int_{[0,1]^2} \left(f_0(W_k(x_1,x_2 ))-f_0(V_k(x_1,x_2) \right) dx_1dx_2\right| \\
&&\leq \sum_{k=1}^r \int_{[0,1]^2} |f_0(W_k(x_1,x_2))-f_0(V_k( x_1,x_2))| dx_1 dx_2  \\
&&\leq  C \sum_{k=1}^r \int_{[0,1]^2} |W_k(x_1,x_2)-V_k(x_1,x_2)| dx_1 dx_2  =   C \|W-V \|_1,
\end{eqnarray*}
by the Lipschitz condition on $f_0$.

\noindent For the second case, note that $|I'_0(W)|$ and $|I'_0(V)|$ are bounded by $|I'_0(\Delta(W,V))|$, where
\begin{eqnarray*}
\Delta(W,V) = \min_{k \in [r]} \inf_{(x_1,x_2) \in {[0,1]}^2 }  \{|W_k(x_1, x_2)|, \\|1-W_k(x_1, x_2)|,|V_k(x_1, x_2)|,|1-V_k(x_1, x_2)|\}
\end{eqnarray*}
and $I'_0(x) = \log\left( \frac{x}{1-x} \right) $. Hence
\begin{eqnarray*}
&&|I(W) - I(V)|= \left| \sum_{k=1}^r \int_{[0,1]^2} \left(I_0(W_k (x_1,x_2))-I_0(V_k(x_1,x_2)) \right) dx_1dx_2 \right| \\
&&\leq \sum_{k=1}^r\int_{[0,1]^2} |I_0(W_k(x_1,x_2))-I_0(V_k(x_1,x_2))| dx_1 dx_2   \\
&&\leq \sum_{k=1}^r |I'_0(\Delta(W,V))| \int_{[0,1]^2} |W_k(x_1,x_2)-V_k(x_1,x_2)| dx_1dx_2 \\
&=&   |I'_0(\Delta(W,V))| \cdot \|W-V \|_1.
\end{eqnarray*}
\end{proof}

\subsection{The principle of optimization of density functions}
\label{sec:pod}
Proving that the solutions of $\optimalsolution = \argmin_{W \in \feasibleregion} I(W)$ are step functions reduces to proving the same statement one level up in generality: that the solutions
\begin{equation*}
     \realoptimalsolutionf{f_d, h} = \argmin_{W \in \realfeasibleregionlh{h} } f_d(W)
\end{equation*}
are step functions for \emph{every} density function $f_d$ and every ordered set of analytic functions $h=\{h_1, \cdots, h_n\}$. We assume throughout that $f_0$ is chosen so that $\realoptimalsolutionf{f_d, h} \neq \emptyset$ -- strict convexity of $f_0$, for instance, already guarantees a minimum of $f_d$ on $\realfeasibleregionlh{h}$.

We also take the solutions of $\realoptimalsolutionf{f_d, h}$ to live in $L^1(\prod_{i=1}^r[0,1]^2)$ rather than in the unlabeled graphon space $\realgraphonspaceu$; this loses nothing, since the subgraph density constraints and the density functions are both invariant under the measure-preserving relabeling map, and the $L^1(\prod_{i=1}^r[0,1]^2)$ topology is strictly stronger than $(\realgraphonspaceu, \delta_\Box)$, so an $L^1$ solution is automatically a valid solution in the unlabeled space. We call the statement that $\realoptimalsolution{f_d, h}$ are step functions the \emph{Principle of Optimization of Density functions} (POD); the RRS conjecture on $\optimalsolution$ is exactly the special case $f_d=I$. Formally:
 \begin{restatable}{theorem}{thmPOD}
 \label{thm:POD}
 Let $\mathcal{F}=\{\mathcal{F}_1, \cdots, \mathcal{F}_n\}$ be an ordered set of quantum graphs and let $h$ be a $|\mathcal{F}| \times r$ matrix of analytic functions such that $\{t(\mathcal{F} \cup \{E_s\}, \cdot, [h | [f_0]] )\}$ is a set of independent analytic functions, where $[h | [f_0]]$ is the matrix obtained from $h$ by adjoining the row vector $[f_0]$ at the bottom. Let $u \in \goodvalues$ and
 \begin{equation*}
    \realoptimalsolutionm{m}{f_d,h}  = \argmin_{W \in \realfeasibleregionmlh{m}{h} } f_d(W).
\end{equation*}
Let $m_1 = m_1(\mathcal{F},u,h)$ and let
$(\sfmat,\pi) \in \realoptimalsolutionm{m_1}{f_d,h}$. Then
\[
\realoptimalsolutionm{m_1}{f_d,h} \subseteq \realoptimalsolutionf{f_d, h}.
\]
Moreover if all $(\sfmat,\pi) \in \realoptimalsolutionm{m_1}{f_d,h}$ are isolated local minima of $f_d$ in $\realfeasibleregionlh{h}$ then  $\realoptimalsolutionm{m_1}{f_d,h}=\realoptimalsolutionf{f_d, h}$.

\end{restatable}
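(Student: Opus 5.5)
The plan is to reduce the infinite-dimensional problem to the finite-dimensional one in three stages, mirroring the structure announced in the introduction. The first stage is to \emph{stabilize the step-function value}. For $m \ge m_1$, let $v_m = \inf\{ f_d(W) : W \in \realfeasibleregionmlh{m}{h}\}$. Refinement $\theta(\cdot,\lambda,l)$ maps $\realfeasibleregionmlh{m}{h}$ into $\realfeasibleregionmlh{m+1}{h}$ without changing the function, so $v_{m+1}\le v_m$. I want to show $v_{m+1}=v_m$ for all $m\ge m_1$. Suppose some $(\sfmat,\pi)\in\realfeasibleregionmlh{m+1}{h}$ had $f_d$ strictly below $v_m$. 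I will use the hypothesis that $\{t(\mathcal{F}\cup\{E_s\},\cdot,[h|[f_0]])\}$ is independent: since $f_d=t(E_s,\cdot,[f_0])$, the augmented map $(t(\mathcal{F},\cdot,h),f_d)$ is an analytic map whose level sets, for the value $(u,c)$, are the sublevel slices of $f_d$ on the constraint manifold. The choice $m_1$ (with $r\frac{m(m+1)}{2}>|\mathcal{F}|+1$) is exactly what makes Theorem~\ref{thm:pi_smoothness} and Lemma~\ref{lem:jacobianrefinement} apply to this augmented family. Then, by Theorem~\ref{thm:numbercomponents} (no new connected components under $m\mapsto m+1$), every connected component of $\realfeasibleregionmlh{m+1}{h}$ contains refinements of points of $\realfeasibleregionmlh{m}{h}$. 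Applying the same theorem to the augmented level sets $\{f_d=c\}$ for regular $c$ (a.e.\ $c$ by Sard, Theorem~\ref{thm:sardstheorem}), a value $c<v_m$ attained in size $m+1$ would produce a component with no size-$m$ representative, which is a contradiction. This is precisely Corollary~\ref{cor:nonessentiallocal}, which I would invoke directly.

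The second stage is to \emph{pass to all graphons}. Step functions $\bigcup_m \realpodalfunctionspacel{m}$ are $L^1$-dense in $\realgraphonspacel$. Given $W\in\realfeasibleregionlh{h}$, approximate it by step functions $W_n\to W$ in $L^1$; these need not satisfy the constraint exactly. Since $u$ is a regular, non-extremal value ($u\in\goodvalues$), the constraint map is a submersion near a refinement of the approximants (Theorem~\ref{thm:smoothsubmesion} together with Lemma~\ref{lem:jacobianrefinement}). So I can correct $W_n$ by a small perturbation inside a fixed-size step space, with size bounded by $|t(\mathcal{F},W_n,h)-u|\to0$ (counting lemma continuity), to obtain $\tilde W_n\in\realfeasibleregionmlh{m_n}{h}$ with $\tilde W_n\to W$ in $L^1$. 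By Lemma~\ref{lem:continuity}, $f_d(\tilde W_n)\to f_d(W)$, and $f_d(\tilde W_n)\ge v_{m_n}=v_{m_1}$. Hence $f_d(W)\ge v_{m_1}$, and every element of $\realoptimalsolutionm{m_1}{f_d,h}$ attains the global infimum, which gives the inclusion. For $I$, I would keep the approximants bounded away from $0,1$ by truncation.

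The third stage is the \emph{equality under isolation}. Let $W^*$ be a global minimizer that is not an $m_1$-step function. By stage two, there are $\tilde W_n\to W^*$ in the step spaces with $f_d(\tilde W_n)\to v_{m_1}$, and these can be chosen as near-minimizers of growing size. Using stage one again, each near-minimizer in size $m_n$ lies in a component containing refinements of $m_1$-minimizers. I would then argue that minimizers of the sizes $m_n$ are exactly refinements of $\realoptimalsolutionm{m_1}{f_d,h}$, so minimizers of size $m_n$ converge toward $W^*$. Since the set of $m_1$-minimizers is compact modulo relabeling and each member is isolated, this forces $W^*$ to coincide with one of them. The main obstacle is stage one: turning ``no new components'' into ``no lower value.'' This needs regularity of the augmented level sets across sizes, and it also needs care that minimizers in size $m+1$ are not on the boundary $\pi_i=0$. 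The non-extremality assumption $u\in\goodvalues$ is intended to handle the latter.
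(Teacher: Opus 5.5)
\textbf{Stage three has a genuine gap.} From stage two you get feasible step functions $\tilde W_n$ of growing size with $\tilde W_n\to W^*$ in $L^1$ and $f_d(\tilde W_n)\to v_{m_1}$, but these are only \emph{near}-minimizers. Knowing that the \emph{exact} minimizers at every size are refinements of $W^{(m_1,r)}_{\mathbb{R}}(\mathcal{F},u,f_d,h)$ says nothing about where near-minimizers lie. So ``minimizers of size $m_n$ converge toward $W^*$'' does not follow. What your argument actually needs is a gap that is uniform in the size: for every $\rho>0$, the quantity
\[
\inf\Bigl\{f_d(W)-v_{m_1} \,:\, W\in S^{(m,r)}_{\mathbb{R}}(\mathcal{F},u,h),\ \operatorname{dist}_{L^1}\bigl(W,W^{(m_1,r)}_{\mathbb{R}}(\mathcal{F},u,f_d,h)\bigr)\ge\rho\Bigr\}
\]
must be bounded below by a positive constant independent of $m$. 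This infimum may well be positive for each fixed $m$ and still tend to $0$ as $m\to\infty$. That is exactly the situation a far-away non-step minimizer $W^*$ produces, so assuming the uniform bound is circular. Isolation is a local statement at each $m_1$-minimizer and cannot supply it. Even granting that $\tilde W_n$ lies in a connected component containing refinements of $m_1$-minimizers, connectedness gives no $L^1$-proximity.

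The paper does not use a convergence argument here. It takes a level $c$ slightly above $f_d(W_\infty)$ and chooses $m>m_1$ so that the constrained level set $\{f_d=c\}$ misses the size-$m$ step functions but meets the size-$(m+1)$ ones; this is where isolation enters. It then perturbs $(u,c)$ to a regular value $v'$ of the augmented map $t(\mathcal{F}\cup\{E_s\},\cdot,[h|[f_0]])$ and reads the result as a new connected component of the augmented level set at size $m+1$, contradicting Theorem~\ref{thm:numbercomponents}. So the equality case comes from reapplying the topological stability theorem to the augmented family, with no uniform-in-$m$ estimate. Your route needs that estimate as a new ingredient.

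Stage two is also weaker than you indicate. Regularity of $u$ concerns points of the step spaces lying \emph{on} $t^{-1}(u)$. Your approximants $W_n$ are off that level set, and their limit $W$ is not a step function, so nothing in the hypotheses makes the continuum derivative $Dt(\mathcal{F},W,h)$ surjective. To correct $W_n$ by a perturbation of size $O(|t(\mathcal{F},W_n,h)-u|)$ while $m_n\to\infty$, you need a lower bound on the smallest singular value of $Dt(\mathcal{F},\cdot,h)$ that is uniform along the sequence. One way to get it would be surjectivity of $Dt(\mathcal{F},W,h)$ on finitely many fixed step directions, together with $L^1$-continuity of $W\mapsto Dt(\mathcal{F},W,h)$ on bounded graphons. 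Lemma~\ref{lem:jacobianrefinement} is only a qualitative rank statement at one point and gives no such bound. You also need $W$ bounded for the counting-lemma continuity and for Lemma~\ref{lem:continuity}, since $S^{(r)}_{\mathbb{R}}(\mathcal{F},u,h)$ contains unbounded real-valued $W$.

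For comparison, the paper's proof of the inclusion just combines Lemma~\ref{lem:continuity} with $L^1$-density of step functions and never restores the constraint. Making the correction explicit is the right instinct, but as written it rests on a non-degeneracy property at non-step feasible points that $u\in\Omega^{(r)}(\mathcal{F},h)$ does not provide. A minor point: for the inclusion you only need the first bullet of Theorem~\ref{thm:nonessentiallocal}. Corollary~\ref{cor:nonessentiallocal} carries an isolation hypothesis that is not available for the first assertion.
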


\noindent The proof strategy is to represent the global solutions of $\realoptimalsolutionm{m}{f_d,h}$ as finite-dimensional vectors, then show there is a maximum size $m_1 = m_1(\mathcal{F},u,h)$ beyond which $\realoptimalsolution{f_d, h}$ never gains a new global minimum inside $\realoptimalsolutionm{m}{f_d,h}$. Throughout, we must keep two topologies on step functions apart: the Euclidean topology and the function topology $L^1(\prod_{i=1}^r[0,1]^2)$; by default we mean the Euclidean one unless we say otherwise.

\begin{figure}
    \centering
\tikzstyle{block} = [draw, rectangle,
    minimum height=3em, minimum width=6em]
\tikzstyle{sum} = [draw, circle, node distance=1cm]
\tikzstyle{input} = [coordinate]
\tikzstyle{output} = [coordinate]
\tikzstyle{pinstyle} = [pin edge={to-,thin,black}]

\begin{tikzpicture}[auto, node distance=2cm]


\node [block] (connected) at ( 0,5) {$\begin{array}{c}
     \mbox{No new connected component}   \\
     \mbox{Theorem } \ref{thm:numbercomponents}
\end{array}$};

\node [block] (noessential) at ( 6,5) {$\begin{array}{c}
     \mbox{No new global minimum}   \\
     \mbox{Corollary } \ref{cor:nonessentiallocal}
\end{array}$};

\node [block] (pod) at ( 6, 3.5 ) {$\begin{array}{c}
     \mbox{POD}  \\
     \mbox{Theorem } \ref{thm:POD}
\end{array}$};

\node [block] (nonextremal) at ( 0,2) {$\begin{array}{c}
     \mbox{Non Extremal Statistics  }   \\
     \mbox{ Lemma }\ref{lem:randomsolutions}
\end{array}$};

\node [block] (radin) at ( 6,2) {$\begin{array}{c}
     \mbox{$\optimalsolution$ are step functions}   \\
     \mbox{Theorem } \ref{thm:radin}
\end{array}$};

\draw [->] (connected) --  (noessential);
\draw [->] (noessential) -- (pod);
\draw [->] (pod) --  (radin);
\draw [->] (nonextremal) --  (radin);

\end{tikzpicture}
    \caption{Dependencies of the main results to prove $\optimalsolution$ are step functions}
    \label{fig:theoremdependencies}
\end{figure}

Once POD is established, Lemma~\ref{lem:randomsolutions} shows that the constraints $0 \leq W \leq 1$ built into $\optimalsolution$ are inactive whenever $u$ is a non-extremal statistic -- and that is exactly what closes the proof of the RRS conjecture for $\optimalsolution$.

Figure~\ref{fig:theoremdependencies} lays out how the main results feed into that proof. The cornerstone is the topological stability, for $m>m_0(\mathcal{F},u,h)$, of the number of connected components of $\realfeasibleregionmlh{m}{h}$ (Theorem~\ref{thm:numbercomponents}); generalizing subgraph density to $h$-subgraph density lets us reapply that same theorem to conclude that $f_d$ gains no new global minimum in $\realfeasibleregionmlh{m}{h}$ once $m > m_1(\mathcal{F},u,h)$, provided every global minimum in $\realfeasibleregionmlh{m_1}{h}$ is isolated.

These two results together prove Theorem~\ref{thm:POD}, by ruling out any global minimum in
\[
\realoptimalsolution{f_d, h} \setminus \cup_{m>m_1} \realoptimalsolutionm{m}{f_d, h}.
\]
Lemma~\ref{lem:randomsolutions} then shows that when $u$ is not an extremal statistic, $\optimalsolutionmf{m}{I}$ takes values only in $(0,1)$, so the constraint $0 \leq W \leq 1$ never binds in the problem $\optimalsolution$; and Theorem~\ref{thm:radin} closes the loop by proving that $\optimalsolution$ are indeed step functions.

\subsection{Local analysis in  $\realgraphonspacel$}
\label{sec:localanalysis}

This section supplies the two checkable sufficient conditions for isolation of local minima that are used elsewhere in the paper: a condition for isolation of $(\sfmat^*,\pi^*)$ \emph{within} the finite-dimensional manifold $\realfeasibleregionmlh{m_1}{h}$ (Lemma \ref{lem:isolatedminimum}, used in the proof of Corollary \ref{cor:nonessentiallocal}), and a condition for isolation of $(\sfmat^*,\pi^*)$ in the \emph{full} space $\realgraphonspacel$ (Theorem \ref{thm:isolatedgraphon}, used in the hypothesis of Theorem \ref{thm:POD}). Throughout, $(\sfmat^*,\pi^*) \in \realfeasibleregionmlh{m_1}{h}$ denotes a critical point of $f_d$ on $\realfeasibleregionmlh{m_1}{h}$ with Lagrange multipliers $\mu^* \in \mathbb{R}^{|\mathcal{F}|}$ (Theorem \ref{thm:first_order_KKT}), and $V^*$ denotes the corresponding step function in $\realgraphonspacel$.

\begin{definition}[Isolated local minimum]
\label{def:isolatedlocalmin}
Let $X \in \{\realfeasibleregionmlh{m_1}{h}, \realfeasibleregionlh{h}\}$, equipped respectively with the Euclidean topology and the $L^1(\prod_{k=1}^r [0,1]^2)$ topology. A point $x^* \in X$ is an \emph{isolated local minimum} of $f_d$ in $X$ if there is a neighborhood $U$ of $x^*$ such that $f_d(x^*) < f_d(x)$ for all $x \in U \setminus \{x^*\}$.
\end{definition}

\noindent By this definition, ``isolated local minimum'' and ``strict local minimum'' coincide. This is the notion actually used in the proof of Theorem \ref{thm:nonessentiallocal}: it rules out a connected, equal-value continuum of competing minimizers emanating from $x^*$, which is precisely what is needed to derive a contradiction with the second case of that theorem.

\subsubsection{Isolation within the step-function manifold}

\begin{lemma}
\label{lem:isolatedminimum}
Let $u \in \regstats$ and $m_1 = m_1(\mathcal{F},u,h)$. Let $(\sfmat^*,\pi^*) \in \realfeasibleregionmlh{m_1}{h}$ be a critical point of $f_d$ on $\realfeasibleregionmlh{m_1}{h}$ with multipliers $\mu^*$. If the Lagrangian Hessian $H^g_{(\sfmat^*,\pi^*)} f_d$ 
(Definition \ref{def:lagrangian_hessian}) is positive definite on $T_{(\sfmat^*,\pi^*)} \realfeasibleregionmlh{m_1}{h}$, then $(\sfmat^*,\pi^*)$ is an isolated local minimum of $f_d$ in $\realfeasibleregionmlh{m_1}{h}$.
\end{lemma}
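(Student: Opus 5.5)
The plan is to reduce the statement to the classical second-order sufficient condition, Theorem~\ref{thm:second_order_sufficient}, applied to $f=f_d$ and the constraint map $h(x)=t(\mathcal{F},x,h)-u$ on the parameter space of $\realpodalfunctionspacel{m_1}$. The statement then follows once we check that the hypotheses of that theorem hold at $(\sfmat^*,\pi^*)$. We also need that ``strict local minimum'' in its conclusion matches ``isolated local minimum'' in the Euclidean topology of Definition~\ref{def:isolatedlocalmin}, which holds directly by that definition.

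\textbf{Step 1: manifold structure near the point.} Since $u \in \regstats$ is a regular value of $t(\mathcal{F},\cdot,h)$ on $\realpodalfunctionspacel{m_1}$, the Jacobian $J_{(\sfmat^*,\pi^*)}(\mathcal{F},h)$ has full rank $|\mathcal{F}|$. By Theorem~\ref{cor:regular_value_manifold}, $\realfeasibleregionmlh{m_1}{h}$ is a smooth (indeed analytic) submanifold near $(\sfmat^*,\pi^*)$, with tangent space equal to the kernel of that Jacobian. The simplex constraint $\sum_i \pi_i=1$ is handled by adjoining it as one more linear equality constraint; its Hessian vanishes, so it does not alter $H^g$. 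Full rank of the enlarged Jacobian then follows because the added row has nonzero coordinate sum in the $\pi$-directions, whereas the density rows alone are already independent. If needed, one can instead fix $\pi$ and invoke Theorem~\ref{thm:pi_smoothness}. We must also ensure $\pi^*$ is interior to $\convexcombm{m_1}$, so that the point is not on the boundary $\realpodalfunctionspacel{m_1-1}$. I would argue this from $u$ being non-extremal, or else state it as an implicit assumption, since a boundary point would be handled at a smaller size.

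\textbf{Step 2: LICQ and multipliers.} Full rank gives the unique multiplier $\mu^*$ of Theorem~\ref{thm:first_order_KKT}, and $f_d$ and the constraints are analytic, hence $C^2$, by the formula (\ref{eq:generalratefunctionstep}) and the finite-sum expression for $t$. So $(\sfmat^*,\pi^*)$ is a KKT point with Lagrangian Hessian $H^g_{(\sfmat^*,\pi^*)}f_d$ exactly as in Definition~\ref{def:lagrangian_hessian}.

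\textbf{Step 3: apply the sufficient condition.} Positive definiteness on $T_{(\sfmat^*,\pi^*)}\realfeasibleregionmlh{m_1}{h}$ is precisely the hypothesis of Theorem~\ref{thm:second_order_sufficient}. That theorem gives a neighborhood $U$ with $f_d(x)>f_d(x^*)$ for $x\in U\setminus\{x^*\}$.

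For a self-contained argument, I would use a local chart, for instance the submersion normal form of Theorem~\ref{thm:smoothsubmesion} or $\mathrm{Exp}$ (Proposition~\ref{prop:exp_local_diffeo}). Write $x=\phi(v)$ with $v$ in the tangent space. Then $f_d(\phi(v))=\mathcal{L}(\phi(v),\mu^*)$ on the manifold, and Taylor expansion gives
\[
f_d(\phi(v))-f_d(x^*)=\tfrac12 v^\top H^g v+o(|v|^2)\ge \tfrac{c}{4}|v|^2 .
\]
Here the first-order term vanishes by stationarity. The curvature term $D^2\phi$ is absorbed because $\nabla_x\mathcal{L}=0$.

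The main obstacle is Step 1's boundary issue, namely ensuring the critical point is an interior point of the parameter space so that the equality-constrained theory applies. The remaining steps are routine.
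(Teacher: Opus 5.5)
Your proposal is correct and is essentially the paper's proof: regularity of $u$ makes $\realfeasibleregionmlh{m_1}{h}$ a smooth manifold near $(\sfmat^*,\pi^*)$ by Theorem~\ref{cor:regular_value_manifold}; Theorem~\ref{thm:second_order_sufficient} with constraint map $t(\mathcal{F},\cdot,h)-u$ then yields a strict local minimum; and Definition~\ref{def:isolatedlocalmin} identifies strict with isolated. Your extra care about the simplex goes beyond the paper, with two corrections: the enlarged Jacobian has full rank because the row of $\sum_i\pi_i=1$ vanishes in the $\sfmat$-directions, where the density rows are already independent (this is the Jacobian used in Lemma~\ref{lem:jacobianrefinement}), not merely because that row is nonzero, since a nonzero row can lie in the span of independent rows; and you need not assume $\pi^*$ interior, because $f_d$ and $t(\mathcal{F},\cdot,h)$ are polynomial in $\pi$, so you can apply the theorem on the whole hyperplane $\sum_i\pi_i=1$ and then restrict to $\pi\ge 0$.
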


\begin{proof}
Since $u$ is a regular value, $\realfeasibleregionmlh{m_1}{h}$ is a smooth (indeed real-analytic, since $t(\mathcal{F},\cdot,h)$ is a finite sum of products of the analytic functions $\{h_k\}$) manifold near $(\sfmat^*,\pi^*)$ by Theorem \ref{cor:regular_value_manifold}. Theorem \ref{thm:second_order_sufficient} applies directly with $f=f_d$, $h = t(\mathcal{F},\cdot,h)-u$, $x^*=(\sfmat^*,\pi^*)$, and gives that $(\sfmat^*,\pi^*)$ is a strict local minimum of $f_d$ on $\realfeasibleregionmlh{m_1}{h}$, which by Definition \ref{def:isolatedlocalmin} is the claim.
\end{proof}

\subsubsection{The tangent decomposition relative to a step function}

Fix $(\sfmat^*,\pi^*) \in \realfeasibleregionmlh{m_1}{h}$ and let $S = \{S_1, \cdots, S_{m_1}\}$ be the partition of $[0,1]$ with $\nu(S_i)=\pi^*_i$ that represents $V^*=(\sfmat^*,\pi^*)$. For $\eta = (\eta_1, \cdots, \eta_r)$ with each $\eta_k \in L^2([0,1]^2)$ symmetric, define the conditional expectation of $\eta$ onto $S$,
\begin{equation*}
\bar\eta_k(x,y) = \sum_{i,j=1}^{m_1} \left( \frac{1}{\pi^*_i \pi^*_j} \int_{S_i \times S_j} \eta_k \right) \mathbbm{1}_{S_i \times S_j}(x,y), \qquad \eta_k^\perp = \eta_k - \bar\eta_k.
\end{equation*}
By construction $\eta = \bar\eta + \eta^\perp$ is the $L^2$-orthogonal decomposition of $\eta$ relative to $S$: $\bar\eta$ is a step function on $S$ (i.e. $\bar\eta$ is tangent to $\realpodalfunctionspacelpi{m_1}{\pi^*}$ at $\sfmat^*$) and $\eta^\perp_k$ has zero mean on every block, $\int_{S_i \times S_j} \eta_k^\perp = 0$ for all $i,j \in [m_1]$, $k \in [r]$. We call $\eta^\perp$ an \emph{off-step perturbation} of $V^*$.

\begin{lemma}[Continuum first variation of $t(F,\cdot,h)$ at a step function]
\label{lem:continuumfirstderivative}
Let $F$ be a multi-relational quantum graph and let $\eta = (\eta_1,\cdots,\eta_r)$, $\eta_k \in L^2([0,1]^2)$ symmetric. Then $t(F,\cdot,h)$ is Gateaux differentiable at $V^*$ and
\begin{equation}
\label{eq:continuumfirstderivative}
Dt(F,V^*,h)[\eta] = \sum_{k=1}^r \sum_{(a,b)\in E_k(F)} \sum_{i,j=1}^{m_1} h_k'(\sfmat^*_{ij;k}) \, t_{ij;k}(F^{\bullet(ab),k},(\sfmat^*,\pi^*),h) \int_{S_i \times S_j} \eta_k(x,y) \, dx\,dy.
\end{equation}
In particular, $Dt(F,V^*,h)[\eta]$ depends on $\eta$ only through its block totals $\left\{\int_{S_i\times S_j}\eta_k\right\}_{i,j,k}$, and $Dt(F,V^*,h)[\bar\eta] = \sum_{i\le j,k} \frac{\partial t(F,\cdot,h)}{\partial \sfmat_{ij;k}}\Big|_{(\sfmat^*,\pi^*)} \bar\eta_{ij;k}$ as in Theorem \ref{thm:partialderivatives}.
\end{lemma}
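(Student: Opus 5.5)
The plan is to differentiate $t(F,V^*+s\eta,h)$ in $s$ at $s=0$ directly under the integral, and then evaluate the resulting integral using the block structure of $V^*$. By linearity in quantum graphs, it suffices to treat a single multi-relational graph $F$.

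\textbf{Step 1: differentiate under the integral.} Write the integrand as $\Phi_s(x)=\prod_{(a,b;k)\in E(F)} h_k\bigl(V^*_k(x_a,x_b)+s\eta_k(x_a,x_b)\bigr)$. Since $V^*$ takes only finitely many values, each $h_k$ and $h_k'$ is bounded on a compact neighborhood of those values. Then
\[
\frac{d}{ds}\Phi_s=\sum_{(a,b;k)} h_k'\bigl(V^*_k+s\eta_k\bigr)\,\eta_k(x_a,x_b)\prod_{\text{other edges}} h_u\bigl(V^*_u+s\eta_u\bigr).
\]
For $\eta\in L^\infty$ this is dominated by $C\sum|\eta_k(x_a,x_b)|\in L^1$, and dominated convergence (via the mean value theorem) justifies exchanging the derivative and the integral.

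\textbf{Main obstacle.} For general $\eta\in L^2$ the perturbed values $V^*+s\eta$ are unbounded, and analytic $h_k$ may grow arbitrarily. There are two ways to handle this:
\begin{itemize}
\item restrict to bounded directions $\eta\in L^\infty$, which is all that is needed downstream in $L^1$-neighborhoods of bounded graphons; or
\item for polynomial $h_k$, use a generalized H\"older inequality, since each edge variable appears once and $\int|\eta_k(x_a,x_b)|\,dx\le\|\eta_k\|_2$.
\end{itemize}
I would state the lemma for $\eta\in L^\infty$, noting that at $s=0$ only $h_k'(V^*)$, which is bounded, multiplies $\eta$. The first-order term is therefore always well defined for $\eta\in L^1$.

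\textbf{Step 2: evaluate at $s=0$ by splitting into blocks.} Set $s=0$ and, for a fixed edge $(a,b;k)$, split $[0,1]^{|V(F)|}$ into the cells $\prod_v S_{x_v}$. On such a cell:
\begin{itemize}
\item every factor $h_u(V^*_u(x_s,x_t))=h_u(\sfmat^*_{x_sx_t;u})$ is constant;
\item $h_k'(V^*_k(x_a,x_b))=h_k'(\sfmat^*_{x_ax_b;k})$ is constant;
\item the only non-constant factor is $\eta_k(x_a,x_b)$.
\end{itemize}
Integrating over the coordinates $x_v$ with $v\ne a,b$ gives the factor $\prod_{v\ne a,b}\pi^*_{x_v}$. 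Integrating over $(x_a,x_b)\in S_i\times S_j$ gives $\int_{S_i\times S_j}\eta_k$. Summing over the remaining labels $\{x_v\}_{v\ne a,b}$ reproduces exactly $t_{ij;k}(F^{\bullet(ab),k},(\sfmat^*,\pi^*),h)$ from Definition~\ref{def:labeledgraphs}, with the factor $h_k'$ kept separate as in \eqref{eq:continuumfirstderivative}. Summing over the edges gives \eqref{eq:continuumfirstderivative}.

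\textbf{Step 3: consequences.} The formula depends on $\eta$ only through its block totals, which proves the first claim. For the second, the step function $\bar\eta$ has block totals $\pi^*_i\pi^*_j\bar\eta_{ij;k}$. Grouping the ordered pairs $(i,j)$ and $(j,i)$ using the symmetry $t_{ij;k}=t_{ji;k}$ (noted after Definition~\ref{def:labeledgraphs}) yields the factor $(2-\delta_{ij})\pi^*_i\pi^*_j$. Comparing with the right-hand side of Theorem~\ref{thm:partialderivatives} then gives $\sum_{i\le j,k}\partial t/\partial\sfmat_{ij;k}\,\bar\eta_{ij;k}$. Finally, since $\eta^\perp$ has zero block totals, $Dt(F,V^*,h)[\eta^\perp]=0$.
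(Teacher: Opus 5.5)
Your Steps~1--2 are the paper's argument. You differentiate $t(F,V^*+s\eta,h)$ under the integral, let the product rule select one edge $(a,b;k)$, and use that $V^*$ is constant on the cells $\prod_v S_{x_v}$ to integrate out everything except $\eta_k(x_a,x_b)$.

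What you add is an improvement. The paper never justifies exchanging $d/ds$ and $\int$, and your worry about $L^2$ directions is well founded. For arbitrary analytic $h$, the map $s\mapsto t(F,V^*+s\eta,h)$ need not be finite for any $s\neq0$: take a single edge with $h(x)=x^3$ and $0\le\eta\in L^2\setminus L^3$. Restricting to $\eta\in L^\infty$, and extending the right-hand side of \eqref{eq:continuumfirstderivative} to $L^1$ through its block-total form, is the right fix. It costs nothing later, since Theorems~\ref{thm:isolatedgraphon} and~\ref{thm:isolatedcutnorm} only use $\eta=W-V^*$ with $\|\eta\|_\infty\le 1$.

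Drop the polynomial/H\"older bullet, though. The same $x^3$ example refutes it. Even with $h$ the identity, the star $K_{1,3}$ produces $\int\bigl(\int\eta(x,y)\,dy\bigr)^3dx$, which diverges for $\eta(x,y)=g(x)+g(y)$ with $0\le g\in L^2\setminus L^3$.

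One bookkeeping point: Definition~\ref{def:labeledgraphs} builds the factor $h_k'(A_{x_ax_b;k})$ into $t_{ij;k}(F^{\bullet(ab),k},\cdot)$. Like the paper, you must read $t_{ij;k}$ in \eqref{eq:continuumfirstderivative} without that factor, or $h_k'$ is counted twice.

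The step that needs repair is Step~3. The symmetry $t_{ij;k}=t_{ji;k}$ fails for a labeled graph with a fixed orientation of the deleted edge. Take the path with edges $(a,b),(b,c)$ in one relation and $h$ the identity: then $t_{ij;1}(F^{\bullet(ab),1},\cdot)=\sum_l A_{jl}\pi_l$, which depends on $j$ alone. The remark after Definition~\ref{def:labeledgraphs} is only correct in the form $t_{ij;k}(F^{\bullet(ab),k})=t_{ji;k}(F^{\bullet(ba),k})$. The $(2-\delta_{ij})$ simplification at the end of the proof of Theorem~\ref{thm:partialderivatives}, which the paper's term-by-term comparison relies on, rests on the same unjustified symmetry.

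Grouping $(i,j)$ with $(j,i)$ honestly gives the coefficient $\pi^*_i\pi^*_j h_k'(A^*_{ij;k})\bigl[t_{ij;k}+(1-\delta_{ij})t_{ji;k}\bigr]$. That is the line just before that simplification, i.e.\ the true partial derivative.

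A cleaner route avoids the issue entirely. Since $\bar\eta$ is constant on each $S_i\times S_j$, $V^*+s\bar\eta$ is itself the $m_1$-step function $(A^*+s\bar\eta,\pi^*)$, with $\bar\eta$ read as the symmetric matrix tuple $(\bar\eta_{ij;k})$. Then $Dt(F,V^*,h)[\bar\eta]=\sum_{i\le j,k}\frac{\partial t}{\partial A_{ij;k}}\bar\eta_{ij;k}$ is just the finite-dimensional chain rule, with no symmetry and no formula comparison needed.
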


\begin{proof}
Differentiating $t(F,V^*+\epsilon\eta,h) = \int_{[0,1]^{|V(F)|}} \prod_{k} \prod_{(s,t)\in E_k(F)} h_k\bigl(V_k^*(x_s,x_t) + \epsilon \eta_k(x_s,x_t)\bigr) \prod_{i} dx_i$ in $\epsilon$ at $\epsilon=0$, the product rule selects exactly one edge $(a,b)\in E_k(F)$ to differentiate, exactly as in the proof of Theorem \ref{thm:partialderivatives}:
\begin{equation*}
Dt(F,V^*,h)[\eta] = \sum_k \sum_{(a,b)\in E_k(F)} \int_{[0,1]^{|V(F)|}} h_k'(V_k^*(x_a,x_b)) \, \eta_k(x_a,x_b) \prod_{(s,t;l)\ne (a,b;k)} h_l(V_l^*(x_s,x_t)) \prod_i dx_i.
\end{equation*}
Since $V^*$ is constant on blocks of $S$, integrating out the variables $\{x_s\}_{s \ne a,b}$ for fixed $x_a \in S_i, x_b \in S_j$ produces exactly $t_{ij;k}(F^{\bullet(ab),k},(\sfmat^*,\pi^*),h)$ by Definition \ref{def:labeledgraphs}, and $h_k'(V_k^*(x_a,x_b)) = h_k'(\sfmat^*_{ij;k})$ is constant on $S_i \times S_j$. Splitting the integral over $x_a,x_b$ into the blocks $S_i \times S_j$ gives \eqref{eq:continuumfirstderivative}. The last statement follows because $\bar\eta_k$ is constant equal to $\bar\eta_{ij;k}$ on $S_i\times S_j$, so $\int_{S_i\times S_j}\bar\eta_k = \pi^*_i\pi^*_j \bar\eta_{ij;k}$, and comparing with Theorem \ref{thm:partialderivatives} term by term.
\end{proof}

\begin{corollary}[Off-step perturbations are unconstrained at first order]
\label{cor:offstepfeasible}
For every multi-relational quantum graph $F$ and every off-step perturbation $\eta^\perp$, $Dt(F,V^*,h)[\eta^\perp] = 0$. Consequently, writing $K(V^*) = \{\eta \in L^2(\prod_k [0,1]^2)_{\mathrm{sym}} : Dt(\mathcal{F}_i,V^*,h)[\eta]=0,\ i=1,\cdots,|\mathcal{F}|\}$ for the kernel of the linearized constraints at $V^*$, we have $\eta = \bar\eta + \eta^\perp \in K(V^*)$ if and only if $\bar\eta \in K(V^*) \cap T_{\sfmat^*}\realpodalfunctionspacelpi{m_1}{\pi^*}$, for an \emph{arbitrary} off-step $\eta^\perp$.
\end{corollary}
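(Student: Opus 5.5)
The plan is to read the corollary off Lemma~\ref{lem:continuumfirstderivative}, using that the first variation of $t(F,\cdot,h)$ at $V^*$ sees a perturbation only through its block totals.

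\emph{Step 1: the first claim.} Take a single multi-relational graph $F$ and an off-step perturbation $\eta^\perp$. Formula \eqref{eq:continuumfirstderivative} writes $Dt(F,V^*,h)[\eta^\perp]$ as a finite linear combination of the integrals $\int_{S_i\times S_j}\eta^\perp_k$. The coefficients are $h_k'(\sfmat^*_{ij;k})\,t_{ij;k}(F^{\bullet(ab),k},(\sfmat^*,\pi^*),h)$, which are finite because $V^*$ is a step function. By the construction of $\eta^\perp$, every one of these block integrals vanishes, since $\eta^\perp_k=\eta_k-\bar\eta_k$ and $\bar\eta_k$ has the same block totals as $\eta_k$. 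Hence $Dt(F,V^*,h)[\eta^\perp]=0$. For a quantum graph $F=\sum_i\alpha_iF_i$, linearity of $t$ in $F$ gives $Dt(F,V^*,h)=\sum_i\alpha_i Dt(F_i,V^*,h)$, so the claim extends to quantum graphs.

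\emph{Step 2: the kernel characterization.} Gateaux derivatives are linear in the direction, and each $\eta_k$ lies in $L^2$, so the block integrals are finite and the formula is a bounded linear functional. Applying Step 1 to each $\mathcal{F}_i$ gives
\[
Dt(\mathcal{F}_i,V^*,h)[\bar\eta+\eta^\perp]=Dt(\mathcal{F}_i,V^*,h)[\bar\eta].
\]
Therefore $\eta\in K(V^*)$ if and only if $\bar\eta\in K(V^*)$.

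\emph{Step 3: the tangent-space condition.} Since $\bar\eta$ is a symmetric step function on the fixed partition $S$, it corresponds to a symmetric matrix tuple $(\bar\eta_{ij;k})$. That tuple is a tangent vector to $\realpodalfunctionspacelpi{m_1}{\pi^*}$ at $\sfmat^*$, because this slice is an open affine space in the $\sfmat$-coordinates. So membership in $T_{\sfmat^*}\realpodalfunctionspacelpi{m_1}{\pi^*}$ is automatic, and the equivalence reduces to the one proved in Step 2.

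The last statement of Lemma~\ref{lem:continuumfirstderivative} identifies $Dt(\mathcal{F}_i,V^*,h)[\bar\eta]$ with the finite-dimensional Jacobian of Theorem~\ref{thm:partialderivatives} applied to $\bar\eta$. So the condition on $\bar\eta$ is exactly the linearized constraint in the step-function slice. Because $\eta^\perp$ never entered the condition, it may be arbitrary.

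I expect no real obstacle. The only point needing care is the bookkeeping that $\bar\eta$ and $\eta$ share block totals. That follows directly from the definition of the conditional expectation, $\int_{S_i\times S_j}\bar\eta_k=\pi^*_i\pi^*_j\cdot\frac{1}{\pi^*_i\pi^*_j}\int_{S_i\times S_j}\eta_k$, which requires $\pi^*_i>0$; degenerate blocks with $\pi^*_i=0$ carry zero measure and can be discarded.
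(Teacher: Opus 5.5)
Your proposal is correct and matches the paper's proof. Both read off from \eqref{eq:continuumfirstderivative} that the first variation sees $\eta$ only through its block totals, which vanish for $\eta^\perp$. Both then use linearity to get $Dt(\mathcal{F}_i,V^*,h)[\bar\eta+\eta^\perp]=Dt(\mathcal{F}_i,V^*,h)[\bar\eta]$. Your extra remarks only make explicit what the paper leaves implicit: extension to quantum graphs by linearity in $F$, automatic tangency of $\bar\eta$ to the affine slice, and the need for $\pi^*_i>0$ in the conditional expectation.
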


\begin{proof}
Immediate from \eqref{eq:continuumfirstderivative}: every block total of $\eta^\perp$ vanishes by construction, and the formula depends on $\eta$ only through these totals. The second claim follows from linearity of $Dt(F,V^*,h)[\cdot]$ and $Dt(F,V^*,h)[\eta]=Dt(F,V^*,h)[\bar\eta]+Dt(F,V^*,h)[\eta^\perp]=Dt(F,V^*,h)[\bar\eta]$.
\end{proof}

\noindent Corollary \ref{cor:offstepfeasible} shows us off-step directions are always ``free'' to first order. The same computation applied to $f_d=t(E_s,\cdot,[f_0])$ (Definition \ref{def;densitiy}) gives $Df_d(V^*)[\eta^\perp]=0$ as well, and consequently:

\begin{proposition}[Lagrangian criticality lifts to $\realgraphonspacel$]
\label{prop:lagrangianlifts}
$V^*$ is a critical point, on the whole space, of the Lagrangian $\mathcal{L}(W) = f_d(W) - \sum_{i=1}^{|\mathcal{F}|} \mu_i^* \bigl(t(\mathcal{F}_i,W,h)-u_i\bigr)$, i.e. $D\mathcal{L}(V^*)[\eta]=0$ for every $\eta \in L^2(\prod_k[0,1]^2)_{\mathrm{sym}}$, not merely for $\eta$ tangent to a step-function space.
\end{proposition}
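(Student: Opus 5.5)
The plan is to reduce the infinite-dimensional first variation to the finite-dimensional KKT condition (Theorem~\ref{thm:first_order_KKT}) at $(\sfmat^*,\pi^*)$, using the fact that every first variation at a step function sees $\eta$ only through its block totals. Concretely, fix an arbitrary symmetric $\eta\in L^2(\prod_k[0,1]^2)$ and split it as $\eta=\bar\eta+\eta^\perp$ relative to the partition $S$ of $V^*$. Everything then reduces to checking that $D\mathcal{L}(V^*)[\eta^\perp]=0$ and $D\mathcal{L}(V^*)[\bar\eta]=0$; linearity of the Gateaux derivative finishes the argument.

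\textbf{Step 1 (Gateaux differentiability and the off-step part).} Since $f_d=t(E_s,\cdot,[f_0])$ is itself an $h$-subgraph density, Lemma~\ref{lem:continuumfirstderivative} applies verbatim to $f_d$ as well as to each $t(\mathcal{F}_i,\cdot,h)$. Hence $\mathcal{L}$ is Gateaux differentiable at $V^*$, and
\[
D\mathcal{L}(V^*)[\eta]=Df_d(V^*)[\eta]-\sum_i\mu_i^*\,Dt(\mathcal{F}_i,V^*,h)[\eta].
\]
The additive constants $u_i$ contribute nothing to this derivative.

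For differentiation under the integral we need a dominating function. Here $V^*$ takes only finitely many values $\sfmat^*_{ij;k}$, so $h_k$, $h_k'$, $f_0'$ are bounded near these values. Each term of the variation is linear in a single factor $\eta_k(x_a,x_b)\in L^2\subset L^1$, and the mean value theorem supplies the domination. By Corollary~\ref{cor:offstepfeasible}, and the identical computation for $f_d$, all block totals of $\eta^\perp$ vanish. Therefore $D\mathcal{L}(V^*)[\eta^\perp]=0$.

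\textbf{Step 2 (the on-step part via finite-dimensional KKT).} For $\bar\eta$, which is constant $\bar\eta_{ij;k}$ on $S_i\times S_j$, the last clause of Lemma~\ref{lem:continuumfirstderivative} gives
\[
D\mathcal{L}(V^*)[\bar\eta]=\sum_{i\le j,\,k}\Bigl(\frac{\partial f_d}{\partial \sfmat_{ij;k}}-\sum_i\mu^*_i\frac{\partial t(\mathcal{F}_i,\cdot,h)}{\partial \sfmat_{ij;k}}\Bigr)\Big|_{(\sfmat^*,\pi^*)}\bar\eta_{ij;k}.
\]
Because $(\sfmat^*,\pi^*)$ is a critical point of $f_d$ on $\realfeasibleregionmlh{m_1}{h}$ with multipliers $\mu^*$, Theorem~\ref{thm:first_order_KKT} says that $\nabla\mathcal{L}$ vanishes in \emph{all} coordinates of $\realpodalfunctionspacel{m_1}$. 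This applies in particular to each $\sfmat_{ij;k}$-coordinate, and the $\pi$-coordinates are simply not needed here. So every bracket is zero and $D\mathcal{L}(V^*)[\bar\eta]=0$.

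\textbf{Main obstacle: the bookkeeping in Step 2.} The step that needs care is making sure the identification between the continuum derivative and the coordinate partials is exact. Two things must match.

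First, the coordinates must agree. One has to confirm that the symmetric parametrization with $i\le j$ and the factor $(2-\delta_{ij})$ from Theorem~\ref{thm:partialderivatives} agree with summing the continuum formula over ordered pairs $(i,j)$, using $\int_{S_i\times S_j}\bar\eta_k=\pi^*_i\pi^*_j\bar\eta_{ij;k}$ and the symmetry of $\bar\eta$.

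Second, $\pi^*$ should lie in the interior of $\convexcombm{m_1}$, so that no block is null. If some $\pi^*_i=0$, that block carries no mass, its totals vanish for every $\eta$, and it can be dropped via the coarsening $\psi$ without loss.

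Provided the multipliers $\mu^*$ are exactly those of the $\sfmat$-part of the KKT system, and they are, since they are unique at a regular point, the argument closes. It shows that $D\mathcal{L}(V^*)[\eta]=0$ for every $\eta$, not merely for $\eta$ tangent to a step-function space.
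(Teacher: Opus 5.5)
Your route is the paper's own. You split $\eta=\bar\eta+\eta^\perp$. The off-step part vanishes because every first variation at $V^*$ sees only block totals (Corollary \ref{cor:offstepfeasible} and its analogue for $f_d$). The step part vanishes by the $\sfmat$-components of the finite-dimensional KKT condition.

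The gap is in the domination argument of Step 1, a justification the paper never gives. As written it is wrong. Boundedness of $h_k,h_k',f_0'$ near the finitely many values $\sfmat^*_{ij;k}$ helps only if the mean-value points $V^*+s\eta$ stay near those values, which holds only for bounded $\eta$. For $\eta\in L^2$, every other factor of the product is also evaluated at perturbed, possibly unbounded arguments, so ``linear in a single factor'' gives no integrable majorant.

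In fact Gateaux differentiability in $L^2$ directions can fail outright:
\begin{itemize}
\item Take the analytic, strongly convex $f_0(x)=x^2+x^4$ and $\eta_k(x,y)=|x-y|^{-1/3}$, which is symmetric and in $L^2$ but not in $L^4$. Then $f_d(V^*+\epsilon\eta)=+\infty$ for every $\epsilon\neq0$.
\item Take $F$ the $3$-star with identity $h$, and $\eta(x,y)=g(x)+g(y)$ with $0\le g\in L^2\setminus L^3$. Then $t(F,V^*+\epsilon\eta)$ is not finite for $\epsilon>0$.
\end{itemize}
So the proposition cannot hold literally on all of $L^2$. Your proof does not establish it there, and neither does the paper's, which simply cites Lemma \ref{lem:continuumfirstderivative}.

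The repair is easy and covers every downstream use: run your argument for bounded symmetric $\eta$. Then all arguments $V^*+s\eta$ with $|s|\le 1$ lie in a compact set on which $h_k,h_k',f_0'$ are bounded. For $f_0=I_0$ you also need $\sfmat^*_{ij;k}\in(0,1)$ and $|s|$ small. The difference quotients are then dominated by $C\sum_{e}|\eta_{k(e)}(x_{a_e},x_{b_e})|$, and your Steps 1--2 go through. This suffices, since Theorem \ref{thm:isolatedgraphon} only uses $\|\eta\|_\infty\le 1$. Alternatively, read $D\mathcal{L}(V^*)$ as the $L^1$-continuous functional given by \eqref{eq:continuumfirstderivative}. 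Your decomposition shows it vanishes for every $\eta\in L^1$: $\bar\eta$ is always a bounded step function, and $\eta^\perp$ has zero block totals.

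Finally, the ``main obstacle'' you flag in Step 2 is not one. $V^*+\epsilon\bar\eta$ is literally the step function $(\sfmat^*+\epsilon\bar\eta,\pi^*)$. So $\epsilon\mapsto\mathcal{L}(V^*+\epsilon\bar\eta)$ is the finite-dimensional Lagrangian along a line in $\sfmat$-coordinates, and the chain rule gives $\nabla_{\sfmat}\mathcal{L}\cdot\bar\eta=0$ directly. No matching of $(2-\delta_{ij})$ factors against the continuum formula is needed.
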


\begin{proof}
Write $\eta=\bar\eta+\eta^\perp$. On off-step directions, $D\mathcal{L}(V^*)[\eta^\perp] = Df_d(V^*)[\eta^\perp] - \sum_i \mu_i^* Dt(\mathcal{F}_i,V^*,h)[\eta^\perp] = 0 - 0 = 0$ by the discussion above. On step directions, $D\mathcal{L}(V^*)[\bar\eta]=0$ for all $\bar\eta$ because $(\sfmat^*,\pi^*)$ is a critical point of $f_d$ on $\realfeasibleregionmlh{m_1}{h}$ with multipliers $\mu^*$, which is exactly the first-order condition 
\[
\nabla f_d(\sfmat^*,\pi^*) = \sum_i \mu_i^* \nabla t(\mathcal{F}_i,(\sfmat^*,\pi^*),h)    
\]
of Theorem \ref{thm:first_order_KKT} applied on $\realpodalfunctionspacel{m_1}$. Summing the two cases gives the claim for general $\eta$.
\end{proof}

\subsubsection{The Lagrangian Hessian decouples on off-step perturbations}

We now compute the second variation $D^2\mathcal{L}(V^*)[\eta,\eta]$. Unlike $f_d$, the constraint functionals $t(\mathcal{F}_i,\cdot,h)$ are not pointwise (diagonal) functionals of $W$ whenever $\mathcal{F}_i$ has two edges sharing a vertex (e.g.\ a triangle), so $D^2t(\mathcal{F}_i,V^*,h)[\cdot,\cdot]$ is in general \emph{not} a diagonal operator; such cross-point couplings are the same phenomenon underlying 
the Hessian computations for subgraph-count large deviations on graphons, see e.g.\ \cite{lubetzky2015replica}. We isolate this exactly.

\begin{definition}[Second-order labeled graphs]
\label{def:secondorderstubs}
Let $e_1=(a,b)\in E_k(F)$, $e_2=(c,d)\in E_{k'}(F)$ be distinct edges of $F$.
\begin{itemize}
\item If $\{a,b\}\cap\{c,d\}=\emptyset$ (vertex-disjoint), let $F^{\bullet(ab),k;(cd),k'}$ be the graph obtained from $F$ by deleting $e_1,e_2$ and labeling $a,b,c,d$, and define
\begin{equation*}
t_{ij;k,\,pq;k'}\bigl(F^{\bullet(ab),k;(cd),k'},(\sfmat,\pi),h\bigr) = \!\!\sum_{\{x_s\}_{s\notin\{a,b,c,d\}}}\!\! \prod_{(s,t;u)\in E(F)\setminus\{e_1,e_2\}}\!\! h_u(\sfmat_{x_s,x_t;u}) \prod_{s \notin \{a,b,c,d\}} \pi_{x_s}
\end{equation*}
with $x_a=i,x_b=j,x_c=p,x_d=q$ fixed.
\item If $\{a,b\}\cap\{c,d\}=\{b\}$ (sharing exactly one vertex, $b=c$), let $F^{\bullet(ab),k;(bd),k'}$ be the graph obtained from $F$ by deleting $e_1,e_2$ and labeling $a,b,d$, and define
\begin{equation*}
t_{i,j,l;k,k'}\bigl(F^{\bullet(ab),k;(bd),k'},(\sfmat,\pi),h\bigr) = \!\!\sum_{\{x_s\}_{s\notin\{a,b,d\}}}\!\! \prod_{(s,t;u)\in E(F)\setminus\{e_1,e_2\}}\!\! h_u(\sfmat_{x_s,x_t;u}) \prod_{s\notin\{a,b,d\}} \pi_{x_s}
\end{equation*}
with $x_a=i,x_b=j,x_d=l$ fixed.
\item If $\{a,b\}=\{c,d\}$ and $k\ne k'$ (same vertex pair, distinct relations), let $F^{\bullet(ab);k,k'}$ be the graph obtained from $F$ by deleting both $(a,b;k)$ and $(a,b;k')$ and labeling $a,b$, and define
\begin{equation*}
t_{ij;k,k'}\bigl(F^{\bullet(ab);k,k'},(\sfmat,\pi),h\bigr) = \!\!\sum_{\{x_s\}_{s\notin\{a,b\}}}\!\! \prod_{(s,t;u)\in E(F)\setminus\{(a,b;k),(a,b;k')\}}\!\! h_u(\sfmat_{x_s,x_t;u}) \prod_{s\notin\{a,b\}} \pi_{x_s}
\end{equation*}
with $x_a=i,x_b=j$ fixed.
\end{itemize}
\end{definition}

\begin{lemma}[Continuum second variation of $t(F,\cdot,h)$ at a step function]
\label{lem:continuumsecondderivative}
With notation as above,
\begin{eqnarray}
\label{eq:continuumsecondderivative}
D^2t(F,V^*,h)[\eta,\eta] = \sum_k \sum_{(a,b)\in E_k(F)} \sum_{i,j} h_k''(\sfmat^*_{ij;k}) \, t_{ij;k}(F^{\bullet(ab),k},(\sfmat^*,\pi^*),h) \int_{S_i\times S_j} \eta_k^2 \nonumber \\
 + \; 2\!\!\sum_{\substack{\{e_1,e_2\}\subset E(F)\\ \text{vertex-disjoint}}}\!\! \sum_{i,j,p,q} h_k'(\sfmat^*_{ij;k})h_{k'}'(\sfmat^*_{pq;k'})\, t_{ij;k,\,pq;k'}(\cdot) \Bigl(\!\!\int_{S_i\times S_j}\!\!\eta_k\Bigr)\Bigl(\!\!\int_{S_p\times S_q}\!\!\eta_{k'}\Bigr) \nonumber \\
 + \; 2 \!\!\sum_{\substack{\{e_1,e_2\}\subset E(F) \\ \text{sharing a vertex}}}\!\! \sum_{i,j,l} h_k'(\sfmat^*_{ij;k}) h_{k'}'(\sfmat^*_{jl;k'}) \, t_{i,j,l;k,k'}(\cdot) \int_{S_j} g_{i;k}^\eta(y)\, g_{l;k'}^\eta(y)\, dy \nonumber \\
 + \; 2 \!\!\sum_{\substack{\{e_1,e_2\}\subset E(F) \\ e_1=(a,b;k),\,e_2=(a,b;k') \\ k\ne k'}}\!\! \sum_{i,j} h_k'(\sfmat^*_{ij;k}) h_{k'}'(\sfmat^*_{ij;k'}) \, t_{ij;k,k'}\bigl(F^{\bullet(ab);k,k'},\cdot\bigr) \int_{S_i\times S_j}\!\!\eta_k\,\eta_{k'}, \qquad
\end{eqnarray}
where $g_{i;k}^\eta(y) := \int_{S_i} \eta_k(x,y)\,dx$.
\end{lemma}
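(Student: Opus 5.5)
We differentiate $\epsilon \mapsto t(F,V^*+\epsilon\eta,h)$ twice at $\epsilon=0$. Since each $h_k$ is analytic, and $V^*$ is bounded (it takes finitely many values), the integrand $\prod_{(s,t;u)\in E(F)} h_u\bigl(V^*_u(x_s,x_t)+\epsilon\eta_u(x_s,x_t)\bigr)$ can be differentiated under the integral sign. For $\eta \in L^2$ we would justify this first for bounded $\eta$ by dominated convergence, and then extend by density to the quadratic form, which is continuous on $L^2$ by the Cauchy--Schwarz bounds below. The product rule gives two kinds of terms. Applying both derivatives to the same edge factor yields $h_k''(V_k^*)\eta_k^2$ times the product over the remaining edges. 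Applying them to two distinct edges $e_1\neq e_2$ yields $h_k'h_{k'}'\,\eta_k(x_a,x_b)\,\eta_{k'}(x_c,x_d)$ times the rest; each unordered pair $\{e_1,e_2\}$ appears twice, which accounts for the factor $2$. For quantum graphs the result follows by linearity, so we treat a single graph $F$.

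Next we integrate out the unlabeled variables blockwise, exactly as in the proof of Lemma~\ref{lem:continuumfirstderivative}. Fix which blocks $S_i,S_j,\dots$ the labeled variables lie in. Then every factor $h_u(V^*_u)$ and its derivatives is constant on those blocks. Integrating over the unlabeled vertices produces the weights $\prod \pi_{x_s}$ and the partial densities of Definitions~\ref{def:labeledgraphs} and \ref{def:secondorderstubs}. We then split into the cases of Definition~\ref{def:secondorderstubs}.

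\begin{itemize}
\item \emph{Diagonal terms.} These give $h_k''(\sfmat^*_{ij;k})\,t_{ij;k}(F^{\bullet(ab),k})\int_{S_i\times S_j}\eta_k^2$.
\item \emph{Vertex-disjoint pairs.} The variables $(x_a,x_b)$ and $(x_c,x_d)$ appear in no common remaining factor except through block indices. The integral therefore factorizes into the product of block totals $\int_{S_i\times S_j}\eta_k \cdot \int_{S_p\times S_q}\eta_{k'}$.
\item \emph{Pairs sharing one vertex $b=c$.} Fix $x_b=y\in S_j$ and integrate $x_a$ over $S_i$ and $x_d$ over $S_l$. This gives $g^\eta_{i;k}(y)\,g^\eta_{l;k'}(y)$, which we then integrate over $S_j$.
\item \emph{Same pair, different relations.} The two edges share both endpoints, so the integrand is $\eta_k\eta_{k'}$ at the same point, integrated over $S_i\times S_j$.
\end{itemize}

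Three bookkeeping points need care. First, we must match the orientation and symmetry convention, using $\eta_k(x,y)=\eta_k(y,x)$, so that the shared vertex can always be placed in the middle position. Second, the partial densities in Definition~\ref{def:secondorderstubs} are defined as sums over blocks that already carry the $\pi$ weights of the unlabeled vertices. The labeled variables, by contrast, contribute actual integrals over $S_i$, etc., rather than factors $\pi_i$. This is consistent with the first-order formula, where $\int_{S_i\times S_j}\eta_k$ replaces $\pi_i\pi_j\bar\eta_{ij;k}$. Third, when a labeled vertex index coincides with another, for example $x_a$ and $x_d$ in the same block, no extra diagonal term arises, because the variables are continuous and only the block labels coincide.

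The main obstacle is the justification of differentiation under the integral for general $L^2$ perturbations when $h_k$ is analytic but not globally bounded, since $h_k(V^*+\epsilon\eta)$ need not even be integrable. We would state the lemma, as the paper implicitly does, for $\eta\in L^\infty$, or interpret $D^2t$ as the continuous extension of the quadratic form. The remaining bounds are routine: $\bigl|\int_{S_j} g_i g_l\bigr| \le \|\eta_k\|_2\|\eta_{k'}\|_2$ by Cauchy--Schwarz, and the block-total and diagonal terms are bounded by $\|\eta\|_2^2$. These bounds show the right-hand side is a bounded quadratic form, so the extension is well defined.
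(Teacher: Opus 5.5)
Your proposal is correct and follows essentially the same route as the paper. You differentiate $t(F,V^*+\epsilon\eta,h)$ twice in $\epsilon$, use the product rule over ordered edge pairs (which gives the factor $2$ for distinct edges), integrate out the unlabeled vertices blockwise, and split into the same four cases: diagonal, vertex-disjoint, one shared vertex (via $g^\eta$ and the symmetry of $\eta$), and same-pair cross-relation.

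The paper leaves implicit how to justify differentiating under the integral, and your remark on this is a genuine addition. Your justification for bounded $\eta$ is enough for the later use, where $\eta=W-V^*$ is bounded. Reading the general $L^2$ case as the continuous extension of the quadratic form is also consistent with that use.
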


\begin{proof}
We differentiate the integral representation
\[
t(F,V^*+\epsilon\eta,h)
= \int_{[0,1]^{|V(F)|}} \prod_{k}\prod_{(s,t)\in E_k(F)} h_k\bigl(V^*_k(x_s,x_t)+\epsilon\,\eta_k(x_s,x_t)\bigr)\prod_i dx_i
\]
twice in $\epsilon$ at $\epsilon=0$. The product rule applied to the second derivative selects, from the product over all edges, an \emph{ordered pair} $(e_1,e_2)$ (possibly equal) at which to differentiate; all remaining edge factors are evaluated at $V^*$. We partition into three cases according to whether $e_1=e_2$, $e_1\ne e_2$ are vertex-disjoint, or $e_1\ne e_2$ share exactly one vertex.

\smallskip\noindent\textit{Case 1: $e_1=e_2=(a,b)\in E_k(F)$ (diagonal terms).}
When both derivatives act on the same factor, the edge $(a,b;k)$ contributes $h_k''(V^*_k(x_a,x_b))\,\eta_k(x_a,x_b)^2$ while every other edge $(s,t;l)\ne(a,b;k)$ contributes $h_l(V^*_l(x_s,x_t))$. Summing over the (ordered) choices of which edge is selected---one choice per edge---gives
\begin{equation*}
\sum_k\sum_{(a,b)\in E_k(F)}\int_{[0,1]^{|V(F)|}}
h_k''(V^*_k(x_a,x_b))\,\eta_k(x_a,x_b)^2
\prod_{(s,t;l)\ne(a,b;k)}h_l(V^*_l(x_s,x_t))\prod_i dx_i.
\end{equation*}
Fix $(x_a,x_b)\in S_i\times S_j$. Since $V^*$ is constant on blocks, $h_k''(V^*_k(x_a,x_b))=h_k''(\sfmat^*_{ij;k})$ is constant on $S_i\times S_j$. Integrating out $\{x_s\}_{s\ne a,b}$ at fixed $x_a\in S_i$, $x_b\in S_j$ yields $t_{ij;k}(F^{\bullet(ab),k},(\sfmat^*,\pi^*),h)$ by Definition~\ref{def:labeledgraphs}. Hence the Case~1 contribution equals
\begin{equation*}
\sum_k\sum_{(a,b)\in E_k(F)}\sum_{i,j}
h_k''(\sfmat^*_{ij;k})\,t_{ij;k}(F^{\bullet(ab),k},(\sfmat^*,\pi^*),h)
\int_{S_i\times S_j}\eta_k^2,
\end{equation*}
which is the first (diagonal) line of \eqref{eq:continuumsecondderivative}.

\smallskip\noindent\textit{Case 2: $e_1=(a,b;k)\ne e_2=(c,d;k')$ with $\{a,b\}\cap\{c,d\}=\emptyset$ (vertex-disjoint pairs).}
Each of the two edges contributes one first-derivative factor, while the remaining $|E(F)|-2$ edges are evaluated at $V^*$:
\begin{equation*}
h_k'(V^*_k(x_a,x_b))\,\eta_k(x_a,x_b)
\cdot h_{k'}'(V^*_{k'}(x_c,x_d))\,\eta_{k'}(x_c,x_d)
\prod_{\substack{(s,t;l)\ne(a,b;k)\\(s,t;l)\ne(c,d;k')}}h_l(V^*_l(x_s,x_t)).
\end{equation*}
Summing over \emph{ordered} pairs $(e_1,e_2)$ with $e_1\ne e_2$ introduces a factor of $2$ when we switch to a sum over \emph{unordered} pairs $\{e_1,e_2\}$. Since $a,b,c,d$ are four distinct vertices, integrating out $\{x_s\}_{s\notin\{a,b,c,d\}}$ at fixed $x_a\in S_i, x_b\in S_j, x_c\in S_p, x_d\in S_q$ yields, by Definition~\ref{def:secondorderstubs},
\[
t_{ij;k,\,pq;k'}\bigl(F^{\bullet(ab),k;(cd),k'},(\sfmat^*,\pi^*),h\bigr).
\]
The factors $h_k'(\sfmat^*_{ij;k})$ and $h_{k'}'(\sfmat^*_{pq;k'})$ are constant on their blocks, so the integral over $(x_a,x_b)\in S_i\times S_j$ and $(x_c,x_d)\in S_p\times S_q$ separates into the product of two block totals:
\[
h_k'(\sfmat^*_{ij;k})\Bigl(\int_{S_i\times S_j}\eta_k\Bigr)
\cdot h_{k'}'(\sfmat^*_{pq;k'})\Bigl(\int_{S_p\times S_q}\eta_{k'}\Bigr).
\]
Multiplying by $t_{ij;k,\,pq;k'}(\cdot)$, summing over $i,j,p,q$ and over unordered vertex-disjoint pairs, and collecting the factor $2$ gives the second line of \eqref{eq:continuumsecondderivative}.

\smallskip\noindent\textit{Case 3: $e_1=(a,b;k)\ne e_2=(b,d;k')$ sharing exactly vertex $b$, $a\ne d$ (shared-vertex pairs).}
The integrand contribution is
\begin{equation*}
h_k'(V^*_k(x_a,x_b))\,\eta_k(x_a,x_b)
\cdot h_{k'}'(V^*_{k'}(x_b,x_d))\,\eta_{k'}(x_b,x_d)
\prod_{\substack{(s,t;l)\ne(a,b;k)\\(s,t;l)\ne(b,d;k')}}h_l(V^*_l(x_s,x_t)).
\end{equation*}
Again a factor $2$ converts ordered to unordered pairs. The three distinct vertices $a,b,d$ remain free; integrating out $\{x_s\}_{s\notin\{a,b,d\}}$ at fixed $x_a\in S_i$, $x_b=y\in S_j$, $x_d\in S_l$ yields, by Definition~\ref{def:secondorderstubs},
\[
t_{i,j,l;k,k'}\bigl(F^{\bullet(ab),k;(bd),k'},(\sfmat^*,\pi^*),h\bigr),
\]
a constant on $S_i\times S_j\times S_l$. The coefficients $h_k'(V^*_k(x_a,y))=h_k'(\sfmat^*_{ij;k})$ and $h_{k'}'(V^*_{k'}(y,x_d))=h_{k'}'(\sfmat^*_{jl;k'})$ are likewise constant. Integrating $x_a\in S_i$ at fixed $y$ gives
\[
\int_{S_i}\eta_k(x_a,y)\,dx_a \;=\; g^\eta_{i;k}(y),
\]
and integrating $x_d\in S_l$ at fixed $y$ gives, using the symmetry $\eta_{k'}(y,x_d)=\eta_{k'}(x_d,y)$,
\[
\int_{S_l}\eta_{k'}(x_b,x_d)\,dx_d \;=\; \int_{S_l}\eta_{k'}(x_d,y)\,dx_d \;=\; g^\eta_{l;k'}(y).
\]
Integrating the product $g^\eta_{i;k}(y)\,g^\eta_{l;k'}(y)$ over $y\in S_j$, summing over $j,i,l$ and over unordered shared-vertex pairs, and collecting the factor $2$ gives the third line of \eqref{eq:continuumsecondderivative}.

\smallskip\noindent\textit{Case 4: $e_1=(a,b;k)\ne e_2=(a,b;k')$ sharing both vertices, $k\ne k'$ (same-pair cross-relation).}
The two edges connect the same unordered vertex pair $\{a,b\}$ but carry distinct relation labels; in a multi-relational graph $F$ this is possible whenever $(a,b)\in E_k(F)\cap E_{k'}(F)$. Each edge contributes one first-derivative factor while all remaining $|E(F)|-2$ edges are evaluated at $V^*$:
\begin{equation*}
h_k'(V^*_k(x_a,x_b))\,\eta_k(x_a,x_b)\cdot h_{k'}'(V^*_{k'}(x_a,x_b))\,\eta_{k'}(x_a,x_b)
\prod_{\substack{(s,t;l)\ne(a,b;k)\\(s,t;l)\ne(a,b;k')}}h_l(V^*_l(x_s,x_t)).
\end{equation*}
A factor of $2$ converts ordered to unordered pairs. The two labeled vertices $a,b$ are the only free variables; integrating out $\{x_s\}_{s\notin\{a,b\}}$ at fixed $x_a\in S_i$, $x_b\in S_j$ yields, by Definition~\ref{def:secondorderstubs},
\[
t_{ij;k,k'}\bigl(F^{\bullet(ab);k,k'},(\sfmat^*,\pi^*),h\bigr),
\]
which is constant on $S_i\times S_j$. The coefficients $h_k'(\sfmat^*_{ij;k})$ and $h_{k'}'(\sfmat^*_{ij;k'})$ are likewise constant on $S_i\times S_j$. Integrating $(x_a,x_b)$ over $S_i\times S_j$ gives $\int_{S_i\times S_j}\eta_k(x,y)\,\eta_{k'}(x,y)\,dx\,dy$. Summing over $i,j$, over unordered same-pair cross-relation pairs $\{(a,b;k),(a,b;k')\}$ with $k<k'$, and collecting the factor $2$ gives the fourth line of \eqref{eq:continuumsecondderivative}.

\smallskip\noindent Cases~1, 2, 3, and~4 partition all ordered pairs $(e_1,e_2)$ selected by the product rule: diagonal ($e_1=e_2$), vertex-disjoint ($e_1\ne e_2$, $\{a,b\}\cap\{c,d\}=\emptyset$), one-shared-vertex ($e_1\ne e_2$, $|\{a,b\}\cap\{c,d\}|=1$), and same-pair cross-relation ($e_1\ne e_2$, $\{a,b\}=\{c,d\}$, $k\ne k'$). Adding their contributions establishes \eqref{eq:continuumsecondderivative}.
\end{proof}

The first line of \eqref{eq:continuumsecondderivative} is a multiplication (diagonal) operator: it depends on $\eta$ only through $\int_{S_i\times S_j}\eta_k^2$. The second line depends on $\eta$ only through its block totals. The third line -- arising precisely from pairs of constraint edges that share a vertex, e.g.\ any two edges of a triangle -- is the genuinely non-diagonal part: $\int_{S_j} g^\eta_{i;k} g^\eta_{l;k'}$ need not vanish even when $\eta$ has zero block totals, since $g^\eta_{i;k}$ can be a nonzero mean-zero function on $S_j$.

\begin{lemma}[Exact decoupling of the Lagrangian Hessian]
\label{lem:exactdecoupling}
For every $\eta = \bar\eta+\eta^\perp \in L^2(\prod_k[0,1]^2)_{\mathrm{sym}}$,
\begin{equation*}
D^2\mathcal{L}(V^*)[\eta,\eta] = D^2\mathcal{L}(V^*)[\bar\eta,\bar\eta] + D^2\mathcal{L}(V^*)[\eta^\perp,\eta^\perp],
\end{equation*}
i.e.\ the cross term $D^2\mathcal{L}(V^*)[\bar\eta,\eta^\perp]$ vanishes identically.
\end{lemma}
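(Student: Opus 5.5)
The plan is to polarize the second variation of Lemma~\ref{lem:continuumsecondderivative} and check that every term of the resulting bilinear form vanishes when one argument is a step function on $S$ and the other is off-step.

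\textbf{Step 1: reduce to the constraint densities.} By Definition~\ref{def;densitiy} the density function satisfies $f_d=t(E_s,\cdot,[f_0])$. The Lagrangian $\mathcal{L}=f_d-\sum_i\mu_i^*(t(\mathcal{F}_i,\cdot,h)-u_i)$ is therefore a finite linear combination of $h$-subgraph densities of quantum graphs. It suffices to prove the claim for a single $t(F,\cdot,h)$ and then sum using linearity.

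\textbf{Step 2: polarize.} For fixed $F$ I define the symmetric bilinear form
\[
B_F(\eta,\zeta)=\frac{\partial^2}{\partial s\,\partial \tau}\Big|_{s=\tau=0} t(F,V^*+s\eta+\tau\zeta,h).
\]
Then $D^2t(F,V^*,h)[\eta,\eta]=B_F(\eta,\eta)$, and hence
\[
B_F(\bar\eta+\eta^\perp,\bar\eta+\eta^\perp)=B_F(\bar\eta,\bar\eta)+2B_F(\bar\eta,\eta^\perp)+B_F(\eta^\perp,\eta^\perp).
\]
Repeating the product-rule argument of Lemma~\ref{lem:continuumsecondderivative} with the two derivatives falling on different directions gives the polarized version of \eqref{eq:continuumsecondderivative}. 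Each quadratic expression is replaced by its symmetrized bilinear counterpart:
\begin{itemize}
\item $\int_{S_i\times S_j}\eta_k^2$ becomes $\int_{S_i\times S_j}\eta_k\zeta_k$;
\item a product of block totals becomes $\tfrac12\bigl[(\int\eta_k)(\int\zeta_{k'})+(\int\zeta_k)(\int\eta_{k'})\bigr]$;
\item $\int_{S_j}g^\eta_{i;k}g^\eta_{l;k'}$ becomes $\tfrac12\int_{S_j}\bigl(g^\eta_{i;k}g^\zeta_{l;k'}+g^\zeta_{i;k}g^\eta_{l;k'}\bigr)$;
\item $\int_{S_i\times S_j}\eta_k\eta_{k'}$ becomes its symmetrization.
\end{itemize}
Since the case split (diagonal, vertex-disjoint, one shared vertex, same pair with distinct relations) is unchanged, this is routine. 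For general $\eta\in L^2$ one would need to justify differentiating under the integral. I would do this for bounded $\eta$ and extend by density, or simply read the second variation as the bilinear form defined by the formula.

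\textbf{Step 3: kill each term with $\eta=\bar\eta$, $\zeta=\eta^\perp$.} The two facts used throughout are that $\bar\eta_k\equiv\bar\eta_{ij;k}$ is constant on $S_i\times S_j$, and that $\int_{S_i\times S_j}\eta^\perp_k=0$ for all $i,j,k$.
\begin{itemize}
\item Diagonal terms and same-pair cross-relation terms: $\int_{S_i\times S_j}\bar\eta_k\eta^\perp_{k'}=\bar\eta_{ij;k}\int_{S_i\times S_j}\eta^\perp_{k'}=0$.
\item Vertex-disjoint terms: each summand contains a block total of $\eta^\perp$ as a factor, so it vanishes.
\item Shared-vertex terms, which I expect to be the only delicate point since the paper stresses they are non-diagonal. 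For $y\in S_j$ we have $g^{\bar\eta}_{i;k}(y)=\int_{S_i}\bar\eta_k(x,y)\,dx=\pi^*_i\bar\eta_{ij;k}$, which is constant on $S_j$. Hence
\[
\int_{S_j}g^{\bar\eta}_{i;k}\,g^{\eta^\perp}_{l;k'}=\pi^*_i\bar\eta_{ij;k}\int_{S_j}\int_{S_l}\eta^\perp_{k'}(x,y)\,dx\,dy=0 .
\]
The other ordering vanishes symmetrically.
\end{itemize}
The non-diagonal coupling thus survives only between two off-step components, which is exactly why the decomposition is an exact splitting rather than an inequality.

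\textbf{Step 4: conclude.} Every term vanishes, so $B_F(\bar\eta,\eta^\perp)=0$ for each $F$. Summing with coefficients $1$ and $-\mu_i^*$ over $E_s$ and the $\mathcal{F}_i$ gives $D^2\mathcal{L}(V^*)[\bar\eta,\eta^\perp]=0$. The polarization identity in Step 2 then yields the stated decomposition.
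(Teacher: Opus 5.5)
Your proposal is correct and follows essentially the same route as the paper. You reduce by bilinearity to the lines of the second-variation formula \eqref{eq:continuumsecondderivative} for $E_s$ and each $\mathcal{F}_i$, then kill every cross term using two facts: $\bar\eta_k$ is constant on each block $S_i\times S_j$ (so $g^{\bar\eta}_{i;k}$ is constant on $S_j$), and $\eta^\perp$ has zero block totals. Your explicit polarization into $B_F(\bar\eta,\eta^\perp)$ is only a bookkeeping difference from the paper's direct expansion of each quadratic line, and you correctly write the relevant block total as an integral over $S_l\times S_j$, where the paper's proof has the typo $S_i\times S_j$.
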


\begin{proof}
$D^2\mathcal{L}(V^*) = D^2f_d(V^*) - \sum_i \mu_i^* D^2t(\mathcal{F}_i,V^*,h)$, so by bilinearity it suffices to check each line of \eqref{eq:continuumsecondderivative} (applied to $f_d=t(E_s,\cdot,[f_0])$ and to each $\mathcal{F}_i$) has no $(\bar\eta,\eta^\perp)$ cross term. The diagonal line: on each block, $\int_{S_i\times S_j}(\bar\eta_k+\eta_k^\perp)^2 = \int \bar\eta_k^2 + 2\bar\eta_{ij;k}\int_{S_i\times S_j}\eta_k^\perp + \int (\eta_k^\perp)^2$, and the middle term vanishes since $\bar\eta_k$ is constant on the block and $\eta^\perp_k$ has zero mean there. The vertex-disjoint line: the block totals of $\bar\eta+\eta^\perp$ equal the block totals of $\bar\eta$ alone (since $\eta^\perp$ contributes none), so this line is bilinear in the block totals of $\bar\eta$ only, with no cross term. The shared-vertex line: for $y \in S_j$, $g^{\bar\eta}_{i;k}(y) = \int_{S_i}\bar\eta_k(x,y)dx = \pi^*_i \bar\eta_{ij;k}$ is \emph{constant} on $S_j$, while $\int_{S_j} g^{\eta^\perp}_{l;k'}(y)\,dy = \int_{S_i\times S_j}\eta_{k'}^\perp=0$, i.e.\ $g^{\eta^\perp}_{l;k'}$ has zero mean on $S_j$; hence $\int_{S_j} g^{\bar\eta}_{i;k}(y)\, g^{\eta^\perp}_{l;k'}(y)\,dy = \pi^*_i\bar\eta_{ij;k}\int_{S_j} g^{\eta^\perp}_{l;k'} = 0$, and symmetrically for the other cross pairing. The same-pair cross-relation line: on each block, $\int_{S_i\times S_j}(\bar\eta_k+\eta_k^\perp)(\bar\eta_{k'}+\eta_{k'}^\perp) = \int_{S_i\times S_j}\bar\eta_k\bar\eta_{k'} + \bar\eta_{ij;k}\int_{S_i\times S_j}\eta_{k'}^\perp + \bar\eta_{ij;k'}\int_{S_i\times S_j}\eta_k^\perp + \int_{S_i\times S_j}\eta_k^\perp\eta_{k'}^\perp$; the two cross terms vanish since $\eta_k^\perp$ and $\eta_{k'}^\perp$ each have zero mean on $S_i\times S_j$. Summing over all blocks and over $\mathcal{F}_i$, all cross terms vanish.
\end{proof}

\noindent Combined with Proposition \ref{prop:lagrangianlifts} and Corollary \ref{cor:offstepfeasible}, Lemma \ref{lem:exactdecoupling} shows that, on $K(V^*)$, $\mathcal{L}$ restricted to step directions and $\mathcal{L}$ restricted to off-step directions are entirely independent quadratic forms: positivity of $D^2\mathcal{L}(V^*)[\bar\eta,\bar\eta]$ on $K(V^*)\cap T_{\sfmat^*}\realpodalfunctionspacelpi{m_1}{\pi^*}$ is exactly the hypothesis of Lemma \ref{lem:isolatedminimum} (restricted from $T_{(\sfmat^*,\pi^*)}\realfeasibleregionmlh{m_1}{h}$ to its sub\-space with $\delta\pi=0$, on which a positive-definite form remains positive-definite), and it remains to control $D^2\mathcal{L}(V^*)[\eta^\perp,\eta^\perp]$ over \emph{all} off-step $\eta^\perp$, an infinite-dimensional space.

\subsubsection{A checkable sufficient condition for isolation in $\realgraphonspacel$}

Write the diagonal weight of the Hessian on block $(i,j;k)$ as
\begin{equation}
\label{eq:diagonalweight}
c_{ij;k} = f_0''(\sfmat^*_{ij;k}) - \sum_{i'=1}^{|\mathcal{F}|} \mu^*_{i'}\, h_k''(\sfmat^*_{ij;k}) \, t_{ij;k}\bigl(\partial_k^{(\bullet\bullet)}\mathcal{F}_{i'},(\sfmat^*,\pi^*),h\bigr),
\end{equation}
using Definition \ref{def:partialbullets} to sum over all edges of relation $k$ in $\mathcal{F}_{i'}$ at once. By Lemma \ref{lem:continuumsecondderivative}, the diagonal part of $D^2\mathcal{L}(V^*)[\eta^\perp,\eta^\perp]$ equals $\sum_{i,j,k} c_{ij;k} \int_{S_i\times S_j} (\eta_k^\perp)^2$.

\begin{lemma}[Cauchy--Schwarz bound on off-diagonal cross terms]
\label{lem:cauchyschwarzbound}
For every off-step $\eta^\perp$ and every shared-vertex pair $e_1=(a,b)\in E_k(\mathcal{F}_{i'})$, $e_2=(b,d)\in E_{k'}(\mathcal{F}_{i'})$ with $a\ne d$,
\begin{equation*}
\left| \int_{S_j} g_{i;k}^{\eta^\perp}(y)\, g_{l;k'}^{\eta^\perp}(y)\, dy \right| \;\le\; \frac{\sqrt{\pi^*_i \pi^*_l}}{2} \left( \|\eta^\perp_k\|^2_{L^2(S_i\times S_j)} + \|\eta^\perp_{k'}\|^2_{L^2(S_l\times S_j)} \right),
\end{equation*}
and for every same-pair cross-relation pair $e_1=(a,b;k),\,e_2=(a,b;k')\in E(\mathcal{F}_{i'})$ with $k\ne k'$,
\begin{equation*}
\left|\int_{S_i\times S_j} \eta_k^\perp\,\eta_{k'}^\perp \right| \;\le\; \frac{1}{2}\left(\|\eta_k^\perp\|^2_{L^2(S_i\times S_j)} + \|\eta_{k'}^\perp\|^2_{L^2(S_i\times S_j)}\right).
\end{equation*}
Consequently there is an explicit constant $\Gamma = \Gamma_{\mathrm{sv}} + \Gamma_{\mathrm{sp}} \geq 0$, with
\begin{align*}
\Gamma_{\mathrm{sv}} &= \max_{j,k} \sum_{i'=1}^{|\mathcal{F}|} |\mu^*_{i'}| \sum_{\substack{(a,b)\in E_k(\mathcal{F}_{i'}),\ (b,d)\in E_{k'}(\mathcal{F}_{i'})\\ a \ne d}} \sum_{i,l} \bigl|h_k'(\sfmat^*_{ij;k}) h_{k'}'(\sfmat^*_{lj;k'})\, t_{i,j,l;k,k'}(\cdot)\bigr| \, \sqrt{\pi^*_i\pi^*_l} \\
\Gamma_{\mathrm{sp}} &= \sum_{i'=1}^{|\mathcal{F}|} |\mu^*_{i'}| \sum_{\substack{k < k' \\ (a,b)\in E_k(\mathcal{F}_{i'})\cap E_{k'}(\mathcal{F}_{i'})}} \sum_{i,j} \bigl|h_k'(\sfmat^*_{ij;k})\, h_{k'}'(\sfmat^*_{ij;k'})\, t_{ij;k,k'}\bigl(F^{\bullet(ab);k,k'},(\sfmat^*,\pi^*),h\bigr)\bigr|,
\end{align*}
such that the total contribution of shared-vertex and same-pair cross-relation terms to $-\sum_{i'}\mu^*_{i'} D^2t(\mathcal{F}_{i'},V^*,h)[\eta^\perp,\eta^\perp]$ is bounded in absolute value by $\Gamma \, \|\eta^\perp\|_2^2 := \Gamma \sum_k \int_{[0,1]^2} (\eta_k^\perp)^2$.
\end{lemma}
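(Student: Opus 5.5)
The plan is to prove the two pointwise inequalities by Cauchy--Schwarz followed by AM--GM, and then to assemble them using the explicit coefficients in lines three and four of \eqref{eq:continuumsecondderivative} from Lemma~\ref{lem:continuumsecondderivative}.

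\textbf{Shared-vertex inequality.} Fix $i,j,l,k,k'$. For $y\in S_j$, Cauchy--Schwarz in $x$ over $S_i$ gives
\[
|g^{\eta^\perp}_{i;k}(y)|^2=\Bigl|\int_{S_i}\eta^\perp_k(x,y)\,dx\Bigr|^2\le \pi^*_i\int_{S_i}\eta^\perp_k(x,y)^2\,dx .
\]
Integrating over $y\in S_j$ yields $\|g^{\eta^\perp}_{i;k}\|^2_{L^2(S_j)}\le \pi^*_i\|\eta^\perp_k\|^2_{L^2(S_i\times S_j)}$. By the symmetry of $\eta^\perp_{k'}$, the same argument gives $\|g^{\eta^\perp}_{l;k'}\|^2_{L^2(S_j)}\le \pi^*_l\|\eta^\perp_{k'}\|^2_{L^2(S_l\times S_j)}$.

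A second Cauchy--Schwarz on $S_j$ bounds $\bigl|\int_{S_j}g_{i;k}g_{l;k'}\bigr|$ by $\sqrt{\pi^*_i\pi^*_l}\,\|\eta^\perp_k\|\,\|\eta^\perp_{k'}\|$, with the norms taken on the appropriate blocks. Applying $ab\le\frac12(a^2+b^2)$ then gives the first inequality.

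\textbf{Same-pair inequality.} The second inequality is pointwise AM--GM, $|\eta_k\eta_{k'}|\le\frac12(\eta_k^2+\eta_{k'}^2)$, integrated over $S_i\times S_j$. The zero-mean property of $\eta^\perp$ is not needed for either bound.

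\textbf{Assembling $\Gamma$.} Multiply line three of \eqref{eq:continuumsecondderivative} for $\mathcal{F}_{i'}$ by $-\mu^*_{i'}$ and bound each summand in absolute value by
\[
|\mu^*_{i'}|\,|h'_kh'_{k'}t_{i,j,l;k,k'}|\sqrt{\pi^*_i\pi^*_l}\cdot\tfrac12\bigl(\|\eta^\perp_k\|^2_{S_i\times S_j}+\|\eta^\perp_{k'}\|^2_{S_l\times S_j}\bigr).
\]
The factor $\frac12$ cancels the factor $2$ in front of line three. Next, regroup the sum by the block $(i,j;k)$ carrying each squared norm. A given $\|\eta^\perp_k\|^2_{S_i\times S_j}$ receives, from the first half, a coefficient summed over the partner index $l$ and the edge pairs. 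The second half does the same with the roles of $(i,k)$ and $(l,k')$ swapped, which is covered because the edge-pair sum runs over ordered labelings $(a,b),(b,d)$.

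Each block's coefficient is therefore at most the quantity inside the $\max_{j,k}$ defining $\Gamma_{\mathrm{sv}}$, summed suitably over $i$. This is the step needing the most care. The $\max$ is over $(j,k)$ only, while the sums over $i,l$ sit inside it, so bounding each block's coefficient by that maximum and summing over blocks $\sum_{i,j,k}$ gives at most $\Gamma_{\mathrm{sv}}\|\eta^\perp\|_2^2$. Here one checks that the inner $\sum_{i,l}$ dominates the coefficient attached to any single $i$, and the bound is an overcount in our favour. If the bookkeeping of the two symmetric halves produces an extra factor, I would absorb it by reading the definition with ordered edge pairs, where each half contributes separately.

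The same-pair line is handled identically. The factor $2$ cancels the $\frac12$, and each block norm $\|\eta^\perp_k\|^2_{S_i\times S_j}$ appears with a coefficient bounded by a summand of $\Gamma_{\mathrm{sp}}$; since $\Gamma_{\mathrm{sp}}$ sums over all $i,j$, this crude bound again dominates. Adding the two contributions gives total cross-term magnitude at most $(\Gamma_{\mathrm{sv}}+\Gamma_{\mathrm{sp}})\|\eta^\perp\|_2^2$.

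I expect the only genuine obstacle to be this index bookkeeping of which block norm each cross term charges, particularly the double counting from the symmetric roles of $e_1$ and $e_2$. The analytic content is just the two Cauchy--Schwarz estimates.
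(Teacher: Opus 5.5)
Your proposal is correct and follows the paper's proof essentially step for step. For the shared-vertex terms you use Cauchy--Schwarz in $x$ over $S_i$, integrate over $y\in S_j$, then apply a second Cauchy--Schwarz plus AM--GM; for the same-pair terms you use pointwise AM--GM; you then regroup by block and bound each block's coefficient by the inner sum defining $\Gamma_{\mathrm{sv}}$ (respectively by $\Gamma_{\mathrm{sp}}$), just as the paper does.

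Your hedge about an extra factor is unnecessary. The $2$ in front of the third line of \eqref{eq:continuumsecondderivative} cancels the AM--GM $\tfrac12$, and rewriting the unordered sum over ordered pairs $(a,b;k),(b,d;k')$ charges each ordered pair only to the block of its first edge. The bound $\Gamma_{\mathrm{sv}}\|\eta^\perp\|_2^2$ therefore comes out with no additional constant.
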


\begin{proof}
\textit{Shared-vertex terms (Case 3).} Recall that $g_{i;k}^{\eta^\perp}(y) = \int_{S_i} \eta_k^\perp(x,y)\,dx$. Applying Cauchy--Schwarz in $L^2(S_i)$ with functions $1$ and $\eta_k^\perp(\cdot,y)$:
\[
g_{i;k}^{\eta^\perp}(y)^2
= \left(\int_{S_i} 1\cdot \eta_k^\perp(x,y)\,dx\right)^2
\le \underbrace{\left(\int_{S_i} 1^2\,dx\right)}_{=\,\pi^*_i}
\cdot \int_{S_i} \eta_k^\perp(x,y)^2\,dx
= \pi^*_i \int_{S_i}\eta_k^\perp(x,y)^2\,dx.
\]
Integrating over $y\in S_j$:
\[
\int_{S_j} g_{i;k}^{\eta^\perp}(y)^2\,dy
\;\le\; \pi^*_i \int_{S_j}\!\int_{S_i}\eta_k^\perp(x,y)^2\,dx\,dy
= \pi^*_i\|\eta_k^\perp\|^2_{L^2(S_i\times S_j)},
\]
and similarly for $g_{l;k'}^{\eta^\perp}$. Cauchy--Schwarz on $\int_{S_j} g_{i;k}^{\eta^\perp}\,g_{l;k'}^{\eta^\perp}$ gives
\[
\left|\int_{S_j} g_{i;k}^{\eta^\perp}\,g_{l;k'}^{\eta^\perp}\right|
\;\le\;
\sqrt{\pi^*_i\|\eta_k^\perp\|^2_{L^2(S_i\times S_j)}}
\cdot
\sqrt{\pi^*_l\|\eta_{k'}^\perp\|^2_{L^2(S_l\times S_j)}},
\]
and AM--GM ($\sqrt{AB}\le(A+B)/2$) yields the first stated bound.
To see how $\Gamma_{\mathrm{sv}}$ arises, note that the total absolute contribution of all shared-vertex terms to $-\sum_{i'}\mu^*_{i'}D^2t(\mathcal{F}_{i'},V^*,h)[\eta^\perp,\eta^\perp]$ is bounded by
\begin{equation*}
\sum_{\substack{i',\,k,k' \\ (a,b;k),(b,d;k'),\,a\ne d}} \sum_{i,l,j}
|\mu^*_{i'}|\,\bigl|h_k'(\sfmat^*_{ij;k})\,h_{k'}'(\sfmat^*_{lj;k'})\,t_{i,j,l;k,k'}(\cdot)\bigr|
\frac{\sqrt{\pi^*_i\pi^*_l}}{2}
\Bigl(\|\eta_k^\perp\|^2_{L^2(S_i\times S_j)}+\|\eta_{k'}^\perp\|^2_{L^2(S_l\times S_j)}\Bigr).
\end{equation*}
For the $\|\eta_k^\perp\|^2_{L^2(S_i\times S_j)}$ terms, define
\[
a_{ij;k} \;:=\; \frac{\sqrt{\pi^*_i\pi^*_l}}{2}
\sum_{\substack{i',\,k',\,l \\ (a,b;k),(b,d;k')\in E(\mathcal{F}_{i'}) \\ a\ne d}}|\mu^*_{i'}|\,
\bigl|h_k'(\sfmat^*_{ij;k})\,h_{k'}'(\sfmat^*_{lj;k'})\,t_{i,j,l;k,k'}(\cdot)\bigr|
\]
as the coefficient of $\|\eta_k^\perp\|^2_{L^2(S_i\times S_j)}$.  Applying $\sum_m a_m b_m\le(\max_m a_m)\sum_m b_m$ with $b_{ij}=\|\eta_k^\perp\|^2_{L^2(S_i\times S_j)}$ gives
\[
\sum_{i,j} a_{ij;k}\,\|\eta_k^\perp\|^2_{L^2(S_i\times S_j)}
\;\le\;
\bigl(\max_{i,j} a_{ij;k}\bigr)\,\|\eta_k^\perp\|^2_{L^2}.
\]
Since $a_{ij;k}\le C_{j,k}$ for every $i$, where
\[
C_{j,k} \;:=\; \sum_{i'=1}^{|\mathcal{F}|}|\mu^*_{i'}|
\!\sum_{\substack{(a,b)\in E_k(\mathcal{F}_{i'}),(b,d)\in E_{k'}(\mathcal{F}_{i'})\\a\ne d}}
\sum_{i,l}\bigl|h_k'(\sfmat^*_{ij;k})\,h_{k'}'(\sfmat^*_{lj;k'})\,t_{i,j,l;k,k'}(\cdot)\bigr|\sqrt{\pi^*_i\pi^*_l},
\]
we get $\max_{i,j}a_{ij;k}\le\max_j C_{j,k}$.  The $\|\eta_{k'}^\perp\|^2_{L^2(S_l\times S_j)}$ terms contribute symmetrically (with $k$ and $k'$ interchanged).  Summing over relations~$k$ and using $\max_{j,k}C_{j,k}=\Gamma_{\mathrm{sv}}$ yields the contribution $\Gamma_{\mathrm{sv}}\|\eta^\perp\|_2^2$.

\textit{Same-pair cross-relation terms (Case 4).} AM--GM applied to $\int_{S_i\times S_j}\eta_k^\perp\,\eta_{k'}^\perp$ gives
\[
\left|\int_{S_i\times S_j}\eta_k^\perp\,\eta_{k'}^\perp\right|
\;\le\;
\frac{1}{2}\Bigl(\|\eta_k^\perp\|^2_{L^2(S_i\times S_j)}+\|\eta_{k'}^\perp\|^2_{L^2(S_i\times S_j)}\Bigr),
\]
which is the second stated bound.  Since $h_k'(\sfmat^*_{ij;k})$, $h_{k'}'(\sfmat^*_{ij;k'})$, and $t_{ij;k,k'}\bigl(F^{\bullet(ab);k,k'},(\sfmat^*,\pi^*),h\bigr)$ are all constants at the step function~$V^*$, bounding $\|\eta_k^\perp\|^2_{L^2(S_i\times S_j)}\le\|\eta^\perp\|_2^2$ turns the absolute value of each $(i',k<k',i,j)$-term into
\begin{equation*}
|\mu^*_{i'}|\,\bigl|h_k'(\sfmat^*_{ij;k})\,h_{k'}'(\sfmat^*_{ij;k'})\,t_{ij;k,k'}\bigl(F^{\bullet(ab);k,k'},(\sfmat^*,\pi^*),h\bigr)\bigr|\cdot\|\eta^\perp\|_2^2.
\end{equation*}
Summing over $i,j$, over pairs $k<k'$ with $(a,b)\in E_k(\mathcal{F}_{i'})\cap E_{k'}(\mathcal{F}_{i'})$, and over $i'$ with coefficients $|\mu^*_{i'}|$ assembles these absolute values into exactly the sum $\Gamma_{\mathrm{sp}}$, yielding the contribution $\Gamma_{\mathrm{sp}}\|\eta^\perp\|_2^2$.

Adding both contributions, and noting that all sums are finite (since $m_1$, $|\mathcal{F}|$, and the number of edges of each $\mathcal{F}_{i'}$ are finite), gives the claimed $\Gamma\|\eta^\perp\|_2^2$ bound.
\end{proof}

\begin{theorem}[Checkable margin criterion]
\label{thm:margincriterion}
Suppose, in addition to the hypotheses of Lemma \ref{lem:isolatedminimum}, that
\begin{equation}
\label{eq:margincriterion}
\min_{i,j,k} c_{ij;k} \;>\; \Gamma,
\end{equation}
with $c_{ij;k}$ as in \eqref{eq:diagonalweight} and $\Gamma$ as in Lemma \ref{lem:cauchyschwarzbound}. Then there exists $c_0>0$ such that $D^2\mathcal{L}(V^*)[\eta^\perp,\eta^\perp] \ge c_0 \|\eta^\perp\|_2^2$ for every off-step $\eta^\perp$.
\end{theorem}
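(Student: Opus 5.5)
Take $\eta^\perp$ off-step and expand $D^2\mathcal{L}(V^*)[\eta^\perp,\eta^\perp] = D^2 f_d(V^*)[\eta^\perp,\eta^\perp] - \sum_{i'}\mu^*_{i'} D^2 t(\mathcal{F}_{i'},V^*,h)[\eta^\perp,\eta^\perp]$, evaluating each piece with Lemma \ref{lem:continuumsecondderivative}. For $f_d = t(E_s,\cdot,[f_0])$ every constituent is a single edge, so only the diagonal line survives. Its labeled density $t_{ij;k}$ equals $1$, which gives $\sum_{i,j,k} f_0''(\sfmat^*_{ij;k})\int_{S_i\times S_j}(\eta_k^\perp)^2$. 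For each $\mathcal{F}_{i'}$ I would examine the four lines of \eqref{eq:continuumsecondderivative} separately.

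\emph{Diagonal line.} Summing over the edges of relation $k$ in $\mathcal{F}_{i'}$ produces $t_{ij;k}(\partial_k^{(\bullet\bullet)}\mathcal{F}_{i'},\cdot,h)$ by Definition \ref{def:partialbullets}. Combined with the $f_d$ term, the diagonal part is exactly $\sum_{i,j,k} c_{ij;k}\|\eta_k^\perp\|^2_{L^2(S_i\times S_j)}$, with $c_{ij;k}$ as in \eqref{eq:diagonalweight}.

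\emph{Vertex-disjoint line.} This line depends only on the block totals of $\eta^\perp$, and these vanish by construction (as in Corollary \ref{cor:offstepfeasible}), so it contributes $0$.

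\emph{Shared-vertex and same-pair cross-relation lines.} By Lemma \ref{lem:cauchyschwarzbound}, their total contribution to $-\sum_{i'}\mu^*_{i'}D^2 t(\mathcal{F}_{i'},V^*,h)[\eta^\perp,\eta^\perp]$ is bounded in absolute value by $\Gamma\|\eta^\perp\|_2^2$.

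Putting these together,
\[
D^2\mathcal{L}(V^*)[\eta^\perp,\eta^\perp] \ \ge\ \sum_{i,j,k} c_{ij;k}\|\eta_k^\perp\|^2_{L^2(S_i\times S_j)} - \Gamma\|\eta^\perp\|_2^2 \ \ge\ \Bigl(\min_{i,j,k}c_{ij;k}-\Gamma\Bigr)\|\eta^\perp\|_2^2 .
\]
The second inequality holds because the blocks $S_i\times S_j$ partition $[0,1]^2$, so $\sum_{i,j}\|\eta_k^\perp\|^2_{L^2(S_i\times S_j)}=\|\eta_k^\perp\|^2_{L^2}$, and because each $c_{ij;k}$ is at least the minimum. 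Hypothesis \eqref{eq:margincriterion} then makes $c_0:=\min_{i,j,k}c_{ij;k}-\Gamma>0$. The constant is independent of $\eta^\perp$ since all the quantities involved are finite sums of constants at the step function $V^*$.

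The main obstacle is bookkeeping, namely checking that the signs and the factor $2$ in front of the non-diagonal lines are really absorbed into $\Gamma$. I would verify this first. The third line carries $2\sum$ over unordered shared-vertex pairs, while Lemma \ref{lem:cauchyschwarzbound} has a factor $\tfrac12$ from AM--GM and then sums both the $\|\eta_k^\perp\|^2$ and the $\|\eta_{k'}^\perp\|^2$ halves, with $k,k'$ interchanged. So I must check that the ordered/unordered convention in $\Gamma_{\mathrm{sv}}$ (the sum over pairs $(a,b),(b,d)$) matches and the factor $2$ is counted exactly once. If it does not, the safe fix is to read the lemma's bound as covering the full contribution ``$2\times$ unordered'' $=$ ``sum over ordered pairs,'' which is how $\Gamma_{\mathrm{sv}}$ is indexed. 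The same check applies to $\Gamma_{\mathrm{sp}}$ with $k<k'$.

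Second, the diagonal identification requires the $h_k''$ factor to pull out of the edge sum, which holds because it depends only on $(i,j,k)$. It also requires the convention that $t_{ij;k}(F^{\bullet(ab),k},\cdot)$ carries no stray $h_k'$ factor in the second-derivative formula; I would read it with the deleted edge's factor removed, as in Definition \ref{def:secondorderstubs}.
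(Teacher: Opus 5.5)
Your proposal is correct and follows the paper's proof essentially step for step. The diagonal line of Lemma~\ref{lem:continuumsecondderivative}, combined with the $f_d$ term, assembles into $\sum_{i,j,k} c_{ij;k}\|\eta_k^\perp\|^2_{L^2(S_i\times S_j)}$; the vertex-disjoint line vanishes because the block totals of $\eta^\perp$ are zero; and the shared-vertex and same-pair cross-relation lines are absorbed into $\Gamma\|\eta^\perp\|_2^2$ by Lemma~\ref{lem:cauchyschwarzbound}, giving $c_0=\min_{i,j,k}c_{ij;k}-\Gamma$ (you also make explicit the final step $\sum_{i,j,k}c_{ij;k}\|\eta_k^\perp\|^2_{L^2(S_i\times S_j)}\ge \bigl(\min_{i,j,k}c_{ij;k}\bigr)\|\eta^\perp\|_2^2$, which the paper leaves implicit).

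Your two bookkeeping caveats concern the notation of the supporting lemmas rather than this argument. The first is to count the pairs in $\Gamma_{\mathrm{sv}}$ as ordered pairs, so that the factor $2$ is absorbed. The second is to read $t_{ij;k}(F^{\bullet(ab),k},\cdot)$ without the deleted edge's $h_k'$ factor. These are exactly the readings under which those lemmas, and the identification of the diagonal part with $c_{ij;k}$, actually hold.
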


\begin{proof}
By Lemma \ref{lem:continuumsecondderivative}, the vertex-disjoint line of $D^2\mathcal{L}(V^*)[\eta^\perp,\eta^\perp]$ vanishes (Corollary \ref{cor:offstepfeasible} applies to it directly, as it depends only on block totals of $\eta^\perp$, which are zero). The same-pair cross-relation line does \emph{not} vanish in general, since $\int_{S_i\times S_j}\eta_k^\perp\eta_{k'}^\perp$ is not a block total. Hence
\[
D^2\mathcal{L}(V^*)[\eta^\perp,\eta^\perp] = \sum_{i,j,k} c_{ij;k}\int_{S_i\times S_j}(\eta_k^\perp)^2 - (\text{shared-vertex terms}) - (\text{same-pair cross-relation terms}).
\]
By \eqref{eq:margincriterion} and Lemma \ref{lem:cauchyschwarzbound}, the last two groups are bounded in absolute value by $\Gamma\|\eta^\perp\|_2^2$, so $D^2\mathcal{L}(V^*)[\eta^\perp,\eta^\perp] \ge \left(\min_{i,j,k} c_{ij;k} - \Gamma\right) \|\eta^\perp\|_2^2 =: c_0 \|\eta^\perp\|_2^2$ with $c_0>0$.
\end{proof}

\begin{remark}[The matching case is exact]
\label{rem:matchingcase}
If every constraint graph $\mathcal{F}_i$ is a \emph{multi-relational matching} --- meaning no two edges of any $\mathcal{F}_i$ share a vertex, even across distinct relations (so in particular no pair $(a,b)$ carries edges of two different relation types in the same $\mathcal{F}_i$) --- then both the shared-vertex line and the same-pair cross-relation line of \eqref{eq:continuumsecondderivative} are empty for every $\mathcal{F}_i$. Hence $\Gamma_{\mathrm{sv}}=\Gamma_{\mathrm{sp}}=0$, so $\Gamma=0$ and \eqref{eq:margincriterion} reduces to the exact criterion $c_{ij;k}>0$ for every block $(i,j;k)$. This covers, in particular, pure edge-density and single-relation-density constraints. For constraint graphs with shared-vertex edges or with multi-relation edges at the same vertex pair --- e.g.\ triangles or multi-relational cliques, central to the RRS conjecture treated in Section \ref{sec:caseI} --- the margin $\Gamma>0$ is in general unavoidable and \eqref{eq:margincriterion} is a sufficient, not necessary, condition. Note that $\Gamma_{\mathrm{sp}}=0$ even for non-matching graphs, provided the edge sets of different relations are vertex-disjoint within each $\mathcal{F}_i$, i.e.\ $E_k(\mathcal{F}_i)\cap E_{k'}(\mathcal{F}_i)=\emptyset$ for all $k\ne k'$.
\end{remark}

We now upgrade coercivity of the quadratic form to an actual local minimum in $L^1(\prod_{k=1}^r[0,1]^2)$ topology. We state and prove this directly for the standard graphon space $\graphonspacel_{[0,1]} = \{W:[0,1]^2\to[0,1]^r : W_k(x,y)=W_k(y,x)\}$, which already contains every graphon of interest in the RRS application of Section \ref{sec:caseI}: restricting to $[0,1]$ from the outset lets the Taylor remainder of $f_0,\{h_k\}$ be controlled globally, via compactness of $[0,1]$, and gives the automatic bound $\|\eta\|_\infty\le1$ for $\eta=W-V^*$ whenever $V^*,W\in\graphonspacel_{[0,1]}$, with no further construction needed.

\begin{theorem}[Isolated local minimum in $L^1$ topology]
\label{thm:isolatedgraphon}
Let $u \in \regstats$, $m_1 = m_1(\mathcal{F},u,h)$, and let $(\sfmat^*,\pi^*) \in \realfeasibleregionmlh{m_1}{h} \cap \graphonspacel_{[0,1]}$ be a critical point of $f_d$ on $\realfeasibleregionmlh{m_1}{h}$ with multipliers $\mu^*$, such that:
\begin{enumerate}
\item the Lagrangian Hessian $H^g_{(\sfmat^*,\pi^*)} f_d$ is positive definite on $T_{(\sfmat^*,\pi^*)} \realfeasibleregionmlh{m_1}{h}$ (the hypothesis of Lemma \ref{lem:isolatedminimum}), and
\item the margin criterion \eqref{eq:margincriterion} of Theorem \ref{thm:margincriterion} holds.
\end{enumerate}
Then there exists $\delta>0$ such that
\begin{equation*}
f_d(V^*) < f_d(W) \qquad \text{for all } W \in \realfeasibleregionlh{h} \cap \graphonspacel_{[0,1]} \text{ with } 0 < \|W-V^*\|_1 < \delta.
\end{equation*}
In particular $V^*$ is an isolated local minimum of $f_d$ in $\realfeasibleregionlh{h} \cap \graphonspacel_{[0,1]}$.
\end{theorem}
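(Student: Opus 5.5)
The plan is to compare $f_d(W)$ with $f_d(V^*)$ through the Lagrangian $\mathcal{L}(W)=f_d(W)-\sum_{i'}\mu^*_{i'}\bigl(t(\mathcal{F}_{i'},W,h)-u_{i'}\bigr)$. On the feasible set $\realfeasibleregionlh{h}$ we have $\mathcal{L}(W)=f_d(W)$, so it suffices to show $\mathcal{L}(W)>\mathcal{L}(V^*)$ for every feasible $W\in\graphonspacel_{[0,1]}$ with $0<\|W-V^*\|_1<\delta$.

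\textbf{Step 1: expansion of $\mathcal{L}$.} Write $\eta=W-V^*=\bar\eta+\eta^\perp$, noting $\|\eta\|_\infty\le 1$. Since every functional involved is a finite polynomial-type integral in $\eta$, expand $\mathcal{L}(V^*+\eta)$ in the number of edges at which $\eta$ is inserted. The first-order term vanishes identically by Proposition~\ref{prop:lagrangianlifts}. The second-order term splits, by Lemma~\ref{lem:exactdecoupling}, into $D^2\mathcal{L}(V^*)[\bar\eta,\bar\eta]+D^2\mathcal{L}(V^*)[\eta^\perp,\eta^\perp]$. This leaves a remainder $R(\eta)$ of two kinds:
\begin{enumerate}
\item pointwise Taylor remainders of $f_0$ and $h_k$ on a single edge, bounded by $C\int|\eta_k|^3$;
\item multi-edge terms in which $\eta$ sits on at least three distinct edges of some $\mathcal{F}_{i'}$, together with mixed terms (second derivative on one edge, first derivative on another).
\end{enumerate}
Compactness of $[0,1]$ gives uniform constants for all Taylor coefficients.

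\textbf{Step 2: positivity of the quadratic part.} Let $K(V^*)$ be as in Corollary~\ref{cor:offstepfeasible}, and let $P$ denote the projection onto the step tangent directions. Feasibility gives $t(\mathcal{F},V^*+\eta,h)=u$, so the linearized constraints hold up to second-order error: $\bar\eta\in K(V^*)+O(\|\eta\|_2^2)$. Hypothesis~1, restricted to the slice $\delta\pi=0$ as in the discussion after Lemma~\ref{lem:exactdecoupling}, then gives $D^2\mathcal{L}[\bar\eta,\bar\eta]\ge c_1\|\bar\eta\|_2^2-C\|\eta\|_2^3$. Theorem~\ref{thm:margincriterion} gives $D^2\mathcal{L}[\eta^\perp,\eta^\perp]\ge c_0\|\eta^\perp\|_2^2$. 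Together, the quadratic part is at least $c\|\eta\|_2^2-C\|\eta\|_2^3$.

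\textbf{Step 3: the remainder is $o(\|\eta\|_2^2)$ as $\|\eta\|_1\to0$.} This is the main obstacle. Because the topology is $L^1$, $\eta$ may be pointwise of size $1$ on a set of small measure, so the single-edge cubic remainder $\int|\eta|^3$ is only $\le\|\eta\|_2^2$, not small relative to it. For the multi-edge terms I would use homomorphism-density inequalities with $|\eta|\le1$. Any term with at least three edges carrying $\eta$, or two carrying $\eta$ and one carrying an $h''$-factor, contains either two vertex-disjoint copies of $\eta$ or a path or cycle of length $\ge 2$. These are bounded respectively by $\|\eta\|_1\|\eta\|_2^2$ or by $t(P_3,|\eta|)\le\|\eta\|_2^2\,\|\eta\|_{\square}^{1/2}$-type estimates (spectrally, e.g.\ $t(C_3,|\eta|)\le\|\eta\|_{op}\|\eta\|_2^2$). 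Both tend to zero relative to $\|\eta\|_2^2$ as $\|\eta\|_1\to0$.

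For the single-edge pointwise terms, I would not Taylor-expand. Instead, on each block $S_i\times S_j$ I would keep the exact increment
\[
\phi_{ij;k}(s)=f_0(\sfmat^*_{ij;k}+s)-f_0(\sfmat^*_{ij;k})-f_0'(\sfmat^*_{ij;k})s-\textstyle\sum_{i'}\mu^*_{i'}\,t_{ij;k}(\partial_k^{(\bullet\bullet)}\mathcal{F}_{i'})\bigl[h_k(\cdot+s)-h_k-h_k's\bigr],
\]
and aim to show $\phi_{ij;k}(s)\ge\tfrac12(c_{ij;k}-\epsilon)s^2$ for $s$ small. For the large-$|s|$ region, whose measure is at most $\|\eta\|_1/\epsilon$, I would first try to use convexity; in the entropy case $h=\mathrm{id}$ the $h''$ terms vanish and $I_0''\ge4$ globally, so $\phi$ dominates a multiple of $s^2$ on all of $[-1,1]$.

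If the general case resists this, I would add the mild hypothesis that the margin \eqref{eq:margincriterion} holds with $c_{ij;k}$ replaced by its infimum over the $s$-range $[-1,1]$. Combining Steps 1--3 yields $\mathcal{L}(W)-\mathcal{L}(V^*)\ge(c-o(1))\|\eta\|_2^2>0$ for $\|\eta\|_1<\delta$ and $\eta\neq0$.
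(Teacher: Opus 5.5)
There is a genuine gap in Step 3, and it cannot be closed under the stated hypotheses. Your Steps 1--2 follow the paper's argument: second-order expansion of $\mathcal{L}$, first-order term removed by Proposition~\ref{prop:lagrangianlifts}, splitting by Lemma~\ref{lem:exactdecoupling}, then the two hypotheses. Your treatment of $\bar\eta$ as lying in $K(V^*)$ only up to $O(\|\eta\|_2^2)$ is more careful than the paper, which simply asserts $D^2\mathcal{L}(V^*)[\bar\eta,\bar\eta]\ge 0$.

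You are right that Step 3 is the crux. The paper bounds the remainder by $C\|\eta\|_\infty\|\eta\|_2^2\le C\|\eta\|_2^2$. It then claims $\tfrac12 c_0\|\eta^\perp\|_2^2-C\|\eta\|_2^2\ge c_0'\|\eta\|_2^2$ once $\|\eta\|_2$ is small. That does not follow: both terms are quadratic, and $\|\eta\|_\infty$ does not shrink with $\|\eta\|_1$. With your handling of $\bar\eta$, the Taylor argument proves isolation only for small $\|W-V^*\|_\infty$.

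Your repair cannot succeed, because the statement is false as written. Take $r=1$, $\mathcal{F}=\{E\}$, $h=\mathrm{id}$, and an analytic double well $f_0$ with $f_0'(a_i)=0<f_0''(a_i)$ at $a_1,a_2\in(0,1)$ and $f_0(a_1)>f_0(a_2)$. Let $\pi^*=(\tfrac12,\tfrac12)$, $\sfmat^*_{11}=\sfmat^*_{22}=a_1$, $\sfmat^*_{12}=a_2$, and $u=\tfrac{a_1+a_2}{2}$. Then:
\begin{itemize}
\item every value is regular, $m_1=2$ and $\mu^*=0$;
\item the Hessian of $f_d$ is positive definite in the coordinates $(\sfmat_{11},\sfmat_{12},\sfmat_{22},\pi_1)$: the $\sfmat$-block is diagonal with entries $f_0''(\sfmat^*_{ij})\pi^*_i\pi^*_j(2-\delta_{ij})$, the $\pi_1$-entry is $4\bigl(f_0(a_1)-f_0(a_2)\bigr)>0$, and mixed entries vanish since $f_0'(a_i)=0$;
\item the margin criterion reads $f_0''(a_i)>0=\Gamma$.
\end{itemize}
Now set $W=a_2$ on a symmetric set of measure $\epsilon$ inside $S_1\times S_1$, and add $4\epsilon(a_1-a_2)$ on $S_2\times S_2$. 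This keeps $t(E,W)=u$ and $W\in[0,1]$, and gives $\|W-V^*\|_1=2\epsilon|a_1-a_2|$. But $f_d(W)-f_d(V^*)=-\epsilon\bigl(f_0(a_1)-f_0(a_2)\bigr)+O(\epsilon^2)<0$. So the large-$|s|$ region of your $\phi_{ij;k}$ is exactly where the argument breaks. A hypothesis that is global in the amplitude, like your fallback, is unavoidable rather than a technical convenience.

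Separately, your claim that every multi-edge remainder term is $o(\|\eta\|_2^2)$ as $\|\eta\|_1\to 0$ is false when two $\eta$-edges share a vertex. Take the star perturbation $\eta=\beta\,\mathbbm{1}\{x\in B\text{ or }y\in B\}$ with $\nu(B)=\epsilon$. Then $\|\eta\|_\Box\le\|\eta\|_1\approx 2\beta\epsilon$ and $\|\eta\|_2^2\approx 2\beta^2\epsilon$.
\begin{itemize}
\item The mixed degree-3 term on a two-edge path gives $\int\eta(x,y)^2\eta(y,z)\,dx\,dy\,dz\approx\beta^3\epsilon$, the same order as the quadratic part.
\item For the two-edge path $P$, $t(P,|\eta|)\approx\beta^2\epsilon$, so the estimate $t(P,|\eta|)\le\|\eta\|_2^2\|\eta\|_\Box^{1/2}$ fails.
\item The same happens with $\eta$ on three edges at a vertex of degree $\ge 3$.
\item It also happens for same-pair cross-relation terms $\int\eta_k^2\eta_{k'}$, which your scalar increments $\phi_{ij;k}$ do not capture.
\end{itemize}
Structurally, change the rows of a vertex set $B\subset S_i$ of measure $\epsilon$ by an $O(1)$ profile $g$, and restore feasibility by an $O(\epsilon)$ step correction. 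This changes $f_d$ by $\epsilon\,\Phi_i(g)+O(\epsilon^2)$, where $\Phi_i(g)$ is the Lagrangian increment of a single vertex with row $V^*_i+g$. $\Phi_i$ is a nonlinear functional of $g$, and hypotheses 1--2 control only its Hessian at $g=0$. A correct $L^1$ statement needs hypotheses that are global in the amplitude of the perturbation. At minimum this means nonnegativity of the pointwise (vector-valued) increments over all admissible values, and of $\Phi_i$ over all admissible profiles; both are necessary for $L^1$-local optimality. Otherwise the conclusion should be weakened to isolation in $L^\infty$.
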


\begin{proof}
Since $f_0,\{h_k\}$ are analytic, hence $C^3$, and $[0,1]$ is compact, the extreme value theorem gives a fixed constant
\[
M_3 := \max\bigl(\sup_{[0,1]}|f_0'''|,\ \max_k \sup_{[0,1]}|h_k'''|\bigr) < \infty,
\]
depending only on $f_0,h$, not on $(\sfmat^*,\pi^*)$. Let $W \in \realfeasibleregionlh{h}\cap \graphonspacel_{[0,1]}$, $\eta = W-V^*$, decomposed as $\eta=\bar\eta+\eta^\perp$ relative to the partition $S$ of $V^*$ as above; since $V^*,W\in\graphonspacel_{[0,1]}$, $\|\eta\|_\infty \le 1$ outright.

We derive this bound by expanding each $h$-subgraph density in $\mathcal{L}$ to second order in $\eta$.

\smallskip
\noindent\textit{Step 1: Reduction.}
Since $\mathcal{L}(W) = f_d(W) - \sum_{i} \mu_i^*\, t(\mathcal{F}_i, W, h)$ and every term is an $h$-subgraph density, it suffices to bound the Taylor remainder of a single term $t(F, V^*+\eta, h)$ for a fixed constraint graph $F$ with $E(F) = \bigsqcup_{k=1}^r E_k(F)$.

\smallskip
\noindent\textit{Step 2: Pointwise Taylor expansion.}
For each edge $e = (i_1, i_2) \in E_k(F)$, write $s_e(x) := V^*_k(x_{i_1}, x_{i_2})$ and $\tau_e(x) := \eta_k(x_{i_1}, x_{i_2})$. Since $h_k \in C^3([0,1])$ (analytic), the Taylor remainder theorem gives pointwise:
\[
h_k(s_e + \tau_e) = h_k(s_e) + h_k'(s_e)\,\tau_e + \tfrac{1}{2}h_k''(s_e)\,\tau_e^2 + R_e,
\quad |R_e| \le \tfrac{M_3}{6}|\tau_e|^3,
\]
and likewise with $f_0$ replacing $h_k$.

\smallskip
\noindent\textit{Step 3: Grouping by degree.}
Substituting into the integrand of $t(F, V^*+\eta, h) = \int \prod_{e \in E(F)} h_{k(e)}(s_e + \tau_e)\,dx$ and expanding the product by total degree in $\tau$:
\begin{itemize}
\item \emph{Degree 0}: integrates to $t(F, V^*, h)$.
\item \emph{Degree 1}: one factor $h_{k(e)}'(s_e)\tau_e$, rest $h_{k(e')}(s_{e'})$; integrates to $Dt(F, V^*, h)[\eta]$.
\item \emph{Degree 2}: one factor $\tfrac{1}{2}h_{k(e)}''(s_e)\tau_e^2$, or two factors each linear in $\tau$, rest $h_{k(e')}(s_{e'})$; integrates to $\tfrac{1}{2}D^2t(F, V^*, h)[\eta,\eta]$.
\item \emph{Remainder}: all terms of total degree $\ge 3$ in $\tau$, or containing at least one $R_e$.
\end{itemize}

\smallskip
\noindent\textit{Step 4: Bounding the remainder.}
Set $M_0 := \max_k \sup_{[0,1]}|h_k| < \infty$ (finite by continuity of $h_k$ on the compact set $[0,1]$). Since $W, V^* \in \graphonspacel_{[0,1]}$, all edge arguments $s_e + \tau_e \in [0,1]$, so $|h_{k(e)}(s_e+\tau_e)| \le M_0$ for every $e$. The remainder terms fall into two types:

\smallskip
\noindent\emph{Type A} (one factor is $R_e$, the remaining $|E(F)|-1$ factors are bounded by $M_0$):
\[
\int |R_e| \cdot \prod_{e' \ne e} |h_{k(e')}(s_{e'}+\tau_{e'})| \; dx
\;\le\; \frac{M_3}{6}\, M_0^{|E(F)|-1} \int |\tau_e|^3 \; dx
\;\le\; \frac{M_3}{6}\, M_0^{|E(F)|-1}\, \|\eta\|_\infty\, \|\tau_e\|_2^2.
\]

\noindent\emph{Type B} (three or more factors linear in $\eta$, rest bounded by $M_0$; smallest case has three linear factors):
\[
\int |\tau_{e_1}||\tau_{e_2}||\tau_{e_3}| \; dx
\;\le\; \|\eta\|_\infty \int |\tau_{e_2}||\tau_{e_3}| \; dx
\;\le\; \|\eta\|_\infty\, \|\tau_{e_2}\|_2\|\tau_{e_3}\|_2
\;\le\; \|\eta\|_\infty\, \|\eta\|_2^2,
\]
where the first step bounds one factor by $\|\eta\|_\infty$, the second applies Cauchy--Schwarz, and the third uses $\|\tau_e\|_2 \le \|\eta\|_2$.

Both types are $O(\|\eta\|_\infty\|\eta\|_2^2)$. Summing over all edges of $F$, over all constraint graphs $\mathcal{F}_i$ with coefficients $|\mu_i^*|$, and including the $f_d$ term, yields a constant $C = C(\mathcal{F},h,f_0,\mu^*) < \infty$ such that
\begin{equation*}
\Bigl| \mathcal{L}(W) - \mathcal{L}(V^*) - D\mathcal{L}(V^*)[\eta] - \tfrac12 D^2\mathcal{L}(V^*)[\eta,\eta] \Bigr| \le C\, \|\eta\|_\infty \, \|\eta\|_2^2 \le C\|\eta\|_2^2,
\end{equation*}
where the last inequality uses $\|\eta\|_\infty \le 1$, which holds since $\eta = W - V^*$ with $W, V^* \in \graphonspacel_{[0,1]}$. By Proposition \ref{prop:lagrangianlifts}, $D\mathcal{L}(V^*)[\eta]=0$ and reordering we obtaine a lower bound:
$$\mathcal{L}(W) - \mathcal{L}(V^*) \ge  \tfrac{1}{2}D^2\mathcal{L}(V^*)[\eta,\eta] - C|\eta|_2^2 $$ 
Since $W \in \realfeasibleregionlh{h}$, we have $t(\mathcal{F}_i,W,h) = u_i = t(\mathcal{F}_i,V^*,h)$ for all $i$, then:
$$\mathcal{L}(W) - \mathcal{L}(V^*) = f_d(W) - f_d(V^*) \ge \tfrac{1}{2}D^2\mathcal{L}(V^*)[\eta,\eta] - C|\eta|_2^2$$
By Lemma \ref{lem:exactdecoupling}, $D^2\mathcal{L}(V^*)[\eta,\eta] = D^2\mathcal{L}(V^*)[\bar\eta,\bar\eta] + D^2\mathcal{L}(V^*)[\eta^\perp,\eta^\perp]$. 
Now we apply the two hypotheses of the theorem:
hypothesis (1) (Positive semidefinite Hessian over $T_{V^*}\realfeasibleregionml{m_1}$): $D^2\mathcal{L}(V^*)[\bar\eta,\bar\eta] \ge 0$ and hypothesis (2) (Margin criterion, Theorem. \ref{thm:margincriterion}): $D^2\mathcal{L}(V^*)[\eta^\perp,\eta^\perp] \ge c_0|\eta^\perp|_2^2$. Then we have 
$$f_d(W) - f_d(V^*) \ge \tfrac{1}{2}\bigl[\underbrace{D^2\mathcal{L}[\bar\eta,\bar\eta]}_{ \ge 0} + \underbrace{D^2\mathcal{L}[\eta^\perp,\eta^\perp]}_{\ge c_0|\eta^\perp|_2^2} \bigr] - {C|\eta|_2^2}$$
thus 
\begin{equation*}
f_d(W) - f_d(V^*) \ge \tfrac{1}{2}c_0|\eta^\perp|_2^2 - C|\eta|_2^2.    
\end{equation*}
Since $\|\eta\|_2^2 = \|\bar\eta\|_2^2+\|\eta^\perp\|_2^2$ and $\|\bar\eta\|_2$ is itself controlled by the same finite-dimensional Hessian bound (Lemma \ref{lem:isolatedminimum}) up to the cubic correction, the right-hand side is bounded below by $c_0'\|\eta\|_2^2$ for some $c_0'>0$ once $\|\eta\|_2$ is small enough. Finally, by the interpolation inequality $\|\eta\|_2^2 \le \|\eta\|_\infty \|\eta\|_1 \le \|\eta\|_1$ (valid unconditionally on $\graphonspacel_{[0,1]}$), there is $\delta>0$, depending only on $c_0$ and $C$, such that $\|W-V^*\|_1 < \delta$ forces $\|\eta\|_2$ small enough for the above estimate to apply. This proves $f_d(W)>f_d(V^*)$ for all such $W \ne V^*$.
\end{proof}

\begin{remark}
In the RRS application of Section \ref{sec:caseI}, solutions already take values in $[0,1]$ by construction, so Theorem \ref{thm:isolatedgraphon} applies directly with no further restriction. Combined with Lemma \ref{lem:isolatedminimum}, it supplies the hypothesis ``$(\sfmat,\pi)$ is an isolated local minimum of $f_d$ in $\realfeasibleregionlh{h}$'' required by Theorem \ref{thm:POD}.
\end{remark}

\subsubsection{Isolation in the cut-norm topology}

The $L^1$ topology is strictly stronger than the cut-norm topology on $\realgraphonspacel$ (indeed $\|\eta\|_\Box \le \|\eta\|_1$ pointwise), so isolation in $L^1$ does \emph{not} automatically transfer to $\|\cdot\|_\Box$: there exist $\eta_n$ with $\|\eta_n\|_\Box\to0$ while $\|\eta_n\|_2$ stays fixed (e.g.\ a symmetrized i.i.d.\ $\pm1$ array, for which $\|\eta_n\|_\Box = O(1/\sqrt{n})$ while $\|\eta_n\|_2=1$), and $f_d$ itself is not continuous in $\|\cdot\|_\Box$ in general. The next theorem shows that isolation in $\|\cdot\|_\Box$ can nonetheless be recovered, under one further hypothesis on $f_0$, by splitting the comparison into two regimes according to the size of $\|\eta\|_2$.

\begin{definition}
Let $f: \mathbb{R} \to \mathbb{R}$. $f$ is \emph{globally strongly convex} on $[0,1]$ if there is $m>0$ such that
\begin{equation}
\label{eq:globalstrongconvexity}
f(b) \ge f(a) + f'(a)(b-a) + \frac{m}{2}(b-a)^2 \qquad \text{for all } a,b \in [0,1].
\end{equation}    
\end{definition}

\begin{theorem}[Isolation in the cut-norm topology]
\label{thm:isolatedcutnorm}  
Suppose, $f_0$ is strongly convex on $[0,1]$ and in addition to the hypotheses of Theorem \ref{thm:isolatedgraphon} (which gives $\delta>0$). 
Then there exists $\delta_\Box>0$ and $f_0$ such that
\begin{equation*}
f_d(V^*) < f_d(W) \qquad \text{for all } W \in \realfeasibleregionlh{h}\cap\graphonspacel_{[0,1]} \text{ with } 0 < \|W-V^*\|_\Box < \delta_\Box.
\end{equation*}
In particular $V^*$ is an isolated local minimum of $f_d$ in $\realfeasibleregionlh{h}\cap\graphonspacel_{[0,1]}$ with respect to the cut-norm topology.
\end{theorem}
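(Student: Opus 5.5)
The plan is to split the cut-norm ball into two regimes according to $\|\eta\|_2$, with $\eta = W - V^*$, as the preceding discussion suggests. \emph{Regime 1} is $\|\eta\|_2$ small; there we reuse Theorem \ref{thm:isolatedgraphon}. \emph{Regime 2} is $\|\eta\|_2 \ge \rho$ for a fixed $\rho>0$; there the constraint functionals, which are cut-norm continuous, are combined with global strong convexity of $f_0$.

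\textbf{Regime 1.} In the proof of Theorem \ref{thm:isolatedgraphon}, the final estimate
\[
f_d(W)-f_d(V^*) \ge c_0'\|\eta\|_2^2
\]
holds once $\|\eta\|_2 \le \rho$ for some $\rho>0$. The $L^1$ hypothesis was used only to force $\|\eta\|_2$ small. So for every feasible $W\neq V^*$ with $\|\eta\|_2\le\rho$ we already have $f_d(W)>f_d(V^*)$, whatever $\|\eta\|_\Box$ is.

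\textbf{Regime 2.} Write
\[
f_d(W)-f_d(V^*) = \mathcal{L}(W)-\mathcal{L}(V^*),
\]
which is valid on $\realfeasibleregionlh{h}$. Split $\mathcal{L} = f_d - \sum_i\mu_i^* t(\mathcal{F}_i,\cdot,h)$. Global strong convexity \eqref{eq:globalstrongconvexity}, applied pointwise with $a=V^*_k(x,y)$ and $b=W_k(x,y)$, gives
\[
f_d(W)-f_d(V^*) \ge Df_d(V^*)[\eta] + \tfrac{m}{2}\|\eta\|_2^2 .
\]
By Proposition \ref{prop:lagrangianlifts},
\[
Df_d(V^*)[\eta]=\sum_i\mu_i^* Dt(\mathcal{F}_i,V^*,h)[\eta].
\]
By Lemma \ref{lem:continuumfirstderivative}, this linear term depends only on block totals $\int_{S_i\times S_j}\eta_k$, each bounded by $\|\eta\|_\Box$. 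Hence
\[
|Df_d(V^*)[\eta]|\le C_1\|\eta\|_\Box .
\]
Therefore in Regime 2,
\[
f_d(W)-f_d(V^*) \ge \tfrac{m}{2}\rho^2 - C_1\|\eta\|_\Box,
\]
which is positive once $\|\eta\|_\Box<\delta_\Box:=m\rho^2/(4C_1)$. Notably, this step never uses the constraint values beyond the identity $\mathcal{L}=f_d+\text{const}$ on the feasible set, and the linear part is controlled purely by block totals.

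\textbf{Assembly.} Take $\delta_\Box$ as above. Any feasible $W$ with $0<\|W-V^*\|_\Box<\delta_\Box$ falls into Regime 1 or Regime 2, so $f_d(W)>f_d(V^*)$ in either case.

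The main obstacle is Regime 1. There I must verify that the bound $f_d(W)-f_d(V^*) \ge \tfrac12 c_0\|\eta^\perp\|_2^2 - C\|\eta\|_\infty\|\eta\|_2^2$ (plus the $\bar\eta$ part) truly yields strict positivity using only smallness of $\|\eta\|_2$, not of $\|\eta\|_\infty$. The cubic remainder is only $O(\|\eta\|_\infty\|\eta\|_2^2)$ with $\|\eta\|_\infty\le1$, which does not beat the quadratic term. To handle this, I would first refine Type A/B remainders using the extra smoothness: bound $\int|\tau|^3$ by $\|\eta\|_\infty\|\eta\|_2^2$ only on the set where $|\eta|$ is large, and use $\|\eta\|_2$ small to make that set's contribution small. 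Failing that, I would absorb the $f_d$ part of the remainder directly via strong convexity, which is exact with no remainder, and bound only the constraint-term remainders, which are cut-norm/$L^2$ continuous. If even this is insufficient, the fallback is to choose $\rho$ via the strong-convexity regime argument itself: enlarge Regime 2 down to the threshold where $\tfrac{m}{2}\|\eta\|_2^2$ dominates, and treat the intermediate band by compactness of feasible graphons in cut distance.
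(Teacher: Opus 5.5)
Your proposal is correct and is essentially the paper's proof: the same split according to the size of $\|\eta\|_2$, the same remainder-free strong-convexity bound $f_d(W)-f_d(V^*)\ge Df_d(V^*)[\eta]+\frac{m}{2}\|\eta\|_2^2$, and the same control of $Df_d(V^*)[\eta]=\sum_i\mu_i^*Dt(\mathcal{F}_i,V^*,h)[\eta]$ through block totals bounded by $\|\eta\|_\Box$. The one simplification is that Regime~1 should invoke Theorem~\ref{thm:isolatedgraphon} as a black box, since Cauchy--Schwarz turns a small enough $\|\eta\|_2$ threshold into $\|\eta\|_1<\delta$ (the paper uses $\tau=\delta/2$); your last-paragraph worry does point at a real weakness inside the proof of that theorem (the remainder $C\|\eta\|_\infty\|\eta\|_2^2$ is not absorbed when only $\|\eta\|_1$ is small), but this statement does not ask you to settle it, because its hypotheses already supply $\delta$.
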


\begin{proof}
Let $\eta=W-V^*$. By Lemma \ref{lem:continuumfirstderivative}, for every multi-relational quantum graph $F$, $|Dt(F,V^*,h)[\eta]| \;\le\; \|\eta\|_\Box \cdot L_1(F)$, and
\begin{equation*}
L_1(F) := \sum_{k=1}^r \sum_{(a,b)\in E_k(F)} \sum_{i,j=1}^{m_1} |h_k'(\sfmat^*_{ij;k})|\, \bigl|t_{ij;k}(F^{\bullet(ab),k},(\sfmat^*,\pi^*),h)\bigr|,    
\end{equation*}
using $|\int_{S_i\times S_j}\eta_k| \le \|\eta_k\|_\Box \le \|\eta\|_\Box$ termwise; $L_1(F)$ is a finite constant computable from the finitely many values $\{\sfmat^*_{ij;k}\}$. Set $L:=\sum_{i=1}^{|\mathcal{F}|}|\mu^*_i|\,L_1(\mathcal{F}_i)$, and $\tau:=\delta/2$.

\textbf{Case $\|\eta\|_2 \le \tau$.} Since $[0,1]^2$ has unit measure, $\|\eta\|_1 \le \|\eta\|_2 \le \tau < \delta$ by Cauchy--Schwarz, so Theorem \ref{thm:isolatedgraphon} applies directly and gives $f_d(W)>f_d(V^*)$, regardless of $\|\eta\|_\Box$.

\textbf{Case $\|\eta\|_2 > \tau$.} Inequality \eqref{eq:globalstrongconvexity}, applied pointwise to $a=V^*(x,y)$, $b=W(x,y)$ and integrated over $[0,1]^2$ (summed over relations), gives the exact, remainder-free bound
\begin{equation*}
f_d(W) \;\ge\; f_d(V^*) + Df_d(V^*)[\eta] + \frac{m}{2}\|\eta\|_2^2,
\end{equation*}
valid for \emph{every} $W\in\graphonspacel_{[0,1]}$, with no smallness requirement on $\eta$. By Proposition \ref{prop:lagrangianlifts}, $Df_d(V^*)[\eta] = \sum_{i=1}^{|\mathcal{F}|}\mu_i^* Dt(\mathcal{F}_i,V^*,h)[\eta]$, so by the bound above,
\begin{equation*}
f_d(W) - f_d(V^*) \;\ge\; \frac{m}{2}\|\eta\|_2^2 - L\,\|\eta\|_\Box \;>\; \frac{m}{2}\tau^2 - L\,\|\eta\|_\Box,
\end{equation*}
which is strictly positive whenever $\|\eta\|_\Box < \frac{m\tau^2}{2L} = \frac{m\delta^2}{8L}$ (if $L=0$ the inequality holds for every $\|\eta\|_\Box$).

Taking $\delta_\Box := \frac{m\delta^2}{8L}$ (or any $\delta_\Box>0$ if $L=0$) covers both cases.
\end{proof}

\begin{remark}
Theorem \ref{thm:isolatedcutnorm} places no restriction on the constraint graphs $\mathcal{F}_i$ -- unlike the margin criterion of Theorem \ref{thm:margincriterion}, it applies equally to matchings and to graphs with shared-vertex edges (e.g.\ triangles), because Lemma \ref{lem:continuumfirstderivative}, on which the Case $\|\eta\|_2>\tau$ bound rests, was proved for arbitrary $F$ without any matching hypothesis. The price is hypothesis \eqref{eq:globalstrongconvexity}: it must hold \emph{uniformly on all of $[0,1]$}, not merely at the finitely many points $\{\sfmat^*_{ij;k}\}$, since $\eta$ is not assumed small in this regime.
\end{remark}

\begin{remark}[The entropy rate function is globally strongly convex]
\label{rem:I0stronglyconvex}
The function $I_0(u)=u\log(u)+(1-u)\log(1-u)$ of Lemma \ref{lem:continuity} satisfies $I_0'(u)=\log\bigl(\tfrac{u}{1-u}\bigr)$ and $I_0''(u) = \tfrac{1}{u(1-u)}$. Since $u(1-u) \le \tfrac14$ for all $u\in(0,1)$ with equality at $u=\tfrac12$, $I_0''(u) \ge 4$ for every $u\in(0,1)$, so $I_0$ satisfies \eqref{eq:globalstrongconvexity} on $[0,1]$ with $m=4$. Hence Theorem \ref{thm:isolatedcutnorm} applies with $f_0=I_0$, giving isolation of $(\sfmat^*,\pi^*)$ in the cut-norm topology for the entropy functional $I$ of the original RRS problem, for arbitrary constraint graphs $\mathcal{F}_i$, including triangles.
\end{remark}

\begin{corollary}[Isolation in the unlabeled graphon space]
\label{cor:isolatedunlabeled}
Under the hypotheses of Theorem \ref{thm:isolatedcutnorm}, let $\delta_\Box>0$ be the constant given there, and let $[V^*] \in \graphonspaceu$ denote the equivalence class of $V^*=(\sfmat^*,\pi^*)$ under the relabeling relation $\sim$. Then $[V^*]$ is an isolated local minimum of $f_d$, with respect to the cut-distance $\delta_\Box$, on 
$\realfeasibleregionlh{h}\cap\graphonspacel_{[0,1]}$  and $f_d(V^*) < f_d(W)$ for every feasible $[W]\in \realfeasibleregionlh{h}\cap\graphonspacel_{[0,1]}$ with $0 < \delta_\Box([W],[V^*]) < \delta_\Box$. 
In particular, taking $f_0=I_0$, this supplies the hypothesis ``all global minima of $I$ are isolated in $\feasibleregion$'' required by Theorem \ref{thm:radin}.
\end{corollary}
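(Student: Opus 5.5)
The plan is to transfer the labeled statement of Theorem~\ref{thm:isolatedcutnorm} to the quotient by exploiting invariance under measure-preserving relabelings. Every object in play is $\Sigma$-invariant: for $\sigma \in \Sigma$ we have $t(\mathcal{F}_i, W^\sigma, h) = t(\mathcal{F}_i, W, h)$ (change of variables in the integral defining the $h$-subgraph density) and $f_d(W^\sigma) = f_d(W)$, since $f_d = t(E_s, \cdot, [f_0])$. Hence feasibility, membership in $\graphonspacel_{[0,1]}$ and the value of $f_d$ all descend to equivalence classes, and the statement is meaningful on $[W]$.

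Fix a feasible $[W]$ with $0 < \delta_\Box([W],[V^*]) < \delta_\Box$. By the definition $\delta_\Box(W,V^*) = \inf_{\sigma} \|W - (V^*)^\sigma\|_\Box$, pick $\sigma \in \Sigma$ with $\|W - (V^*)^\sigma\|_\Box < \delta_\Box$. Equivalently, applying $\sigma^{-1}$ (the cut norm is invariant under simultaneous relabeling), $\|W^{\sigma^{-1}} - V^*\|_\Box < \delta_\Box$. Set $W' = W^{\sigma^{-1}}$. Then $W'$ is feasible, lies in $\graphonspacel_{[0,1]}$, and satisfies $f_d(W') = f_d(W)$.

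Two cases arise. If $W' \neq V^*$ as elements of $L^1$, Theorem~\ref{thm:isolatedcutnorm} gives $f_d(V^*) < f_d(W') = f_d(W)$. If $W' = V^*$ a.e., then $[W] = [V^*]$, which contradicts $\delta_\Box([W],[V^*]) > 0$. Together these give the strict inequality on the punctured $\delta_\Box$-ball, i.e.\ isolation of $[V^*]$.

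The main subtlety is the infimum in $\delta_\Box$. It need not be attained, and a classical caveat is that $\delta_\Box = 0$ does not imply equality in $\graphonspacel$ modulo $\Sigma$ (one needs weak isomorphism). The argument above sidesteps this: strict inequality $< \delta_\Box$ yields a near-optimal $\sigma$ with no attainment needed, and positivity of the distance excludes the degenerate case. I would also note that the constant $\delta_\Box$ from Theorem~\ref{thm:isolatedcutnorm} depends only on $V^*$ and not on $\sigma$, which is exactly why it works uniformly across relabelings.

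For the final sentence of the corollary, take $f_0 = I_0$ and invoke Remark~\ref{rem:I0stronglyconvex}, which supplies strong convexity with constant $4$. Applying the above to each global minimizer then yields the isolation hypothesis used by Theorem~\ref{thm:radin}.
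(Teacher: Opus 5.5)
Your argument is correct, but it does the transfer in the opposite direction from the paper.

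\textbf{The paper's route.} The paper keeps $W$ fixed and moves the minimizer. It notes that $V^{*\sigma}$ is a step function on the measurable partition $\{\sigma^{-1}(S_i)\}$, with the same values $\sfmat^*_{ij;k}$, weights $\pi^*_i$ and multipliers $\mu^*$. It argues that none of the local-analysis proofs used that the blocks are intervals, so Theorem~\ref{thm:isolatedcutnorm} holds at every $V^{*\sigma}$ with the same $\delta_\Box$. It then compares $W$ directly with $V^{*\sigma}$.

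\textbf{Your route.} You keep the minimizer fixed and move $W$ to $W^{\sigma^{-1}}$. This uses only that feasibility, $[0,1]$-valuedness, $f_d$ and the cut norm are relabeling-invariant. Theorem~\ref{thm:isolatedcutnorm} is then invoked exactly once, at $V^*$, as a black box.

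\textbf{What each buys.} Your version is more economical in three ways.
\begin{itemize}
\item It needs no re-audit of the proofs for non-interval partitions. The paper needs that audit because $V^{*\sigma}$ is not literally a point of the interval-based space $\realfeasibleregionmlh{m_1}{h}$.
\item That audit must implicitly also cover the $\delta$ of Theorem~\ref{thm:isolatedgraphon}, since $\delta_\Box$ is built from it, and the paper does not list that theorem among those it re-checks.
\item You handle the degenerate case $\|W-V^{*\sigma}\|_\Box=0$, which the paper passes over.
\end{itemize}
The price is that you need $\sigma^{-1}\in\Sigma$. This is built into the paper's $\Sigma$ and into the symmetry of $\sim$. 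The paper's route, by contrast, would also survive a definition of $\delta_\Box$ using non-invertible measure-preserving maps.

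\textbf{Two small points.}
\begin{itemize}
\item $W'=V^*$ a.e.\ does not literally give $[W]=[V^*]$ under the paper's pointwise $\sim$. You do not need it: $\delta_\Box([W],[V^*])\le\|W'-V^*\|_\Box=0$ is already the contradiction.
\item Your remark that $\delta_\Box$ is uniform over relabelings is what the paper's route needs, not yours. In your argument the theorem is only ever applied at $V^*$.
\end{itemize}
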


\begin{proof}
None of the proofs of Lemma \ref{lem:continuumfirstderivative}, Lemma \ref{lem:exactdecoupling}, Theorem \ref{thm:margincriterion}, or Theorem \ref{thm:isolatedcutnorm} use that the blocks $S_1,\ldots,S_{m_1}$ of $V^*$ are intervals -- each uses only that $\{S_i\}$ is a measurable partition of $[0,1]$ with $\nu(S_i)=\pi_i^*$, on which $V^*$ is constant equal to $\sfmat^*_{ij;k}$ on $S_i\times S_j$. For any $\sigma\in\Sigma$, the relabeled function $V^{*\sigma}$ is likewise constant, equal to the same value $\sfmat^*_{ij;k}$, on $\sigma^{-1}(S_i)\times\sigma^{-1}(S_j)$, which is again a measurable partition of $[0,1]$ with the same measures $\pi^*_i$. Consequently every constant appearing in Theorem \ref{thm:isolatedcutnorm} -- the diagonal weights $c_{ij;k}$, the margin $\Gamma$, $L$, $\tau$, and $\delta_\Box$ itself -- depends only on $\{\sfmat^*_{ij;k}\}_{i,j,k}$, $\{\pi^*_i\}_i$ and $\mu^*$, none of which change under relabeling. Theorem \ref{thm:isolatedcutnorm} therefore applies verbatim to $V^{*\sigma}$ for every $\sigma\in\Sigma$, with the same $\delta_\Box$:
\begin{equation*}
f_d(V^{*\sigma}) < f_d(W') \qquad \text{for every feasible } W' \text{ with } 0<\|W'-V^{*\sigma}\|_\Box<\delta_\Box.
\end{equation*}
Now let $[W]\in\graphonspaceu$ be feasible with $0<\delta_\Box([W],[V^*])<\delta_\Box$, and fix a representative $W$. By definition $\delta_\Box([W],[V^*])=\inf_{\sigma\in\Sigma}\|W-V^{*\sigma}\|_\Box$, so since this infimum is strictly less than $\delta_\Box$, there exists $\sigma\in\Sigma$ with $\|W-V^{*\sigma}\|_\Box<\delta_\Box$. Applying the displayed bound to this $\sigma$ and $W'=W$ gives $f_d(W)>f_d(V^{*\sigma})$. Since $f_d$ is invariant under relabeling (a change of variables under the measure-preserving $\sigma$), $f_d(V^{*\sigma})=f_d(V^*)$, so $f_d(W)>f_d(V^*)$.
\end{proof}

\subsection{Global analysis on $\realoptimalsolutionm{m}{f_d, h}$}
\label{sec:globalanalysis}

We now show that $f_d$ gains no new global minimum in $\realoptimalsolutionm{m}{f_d, h}$ once $m > m_1(\mathcal{F},u,h)$ -- the second pillar of POD, after the local analysis of the previous subsection.

The argument runs through topology first: we show that increasing the step-function representation from $m$ to $m+1$, for $m\geq m_0(\mathcal{F},u,h)$, never creates a new connected component of $\realfeasibleregionmlh{m}{h}$. 

\subsubsection{Non-increasing number of connected components of $\realfeasibleregionmlh{m}{h}$}

\begin{definition}
\label{def:level-set-marginal-map}
Let $m \leq m'$. Let $T^{-(m,m')}(\mathcal{F},h)$ be the set of level sets of $t(\mathcal{F}, \cdot,h):\realpodalfunctionspacel{m'} \to \mathbb{R}^{|\mathcal{F}|}$ for $u \in \marginalpolytopemh{m}$, i.e.

\begin{equation*}
    T^{-(m,m')}(\mathcal{F},h) = \{ x \in \realpodalfunctionspacel{m'}  :  t(\mathcal{F},x,h) \in \marginalpolytopemh{m} \}.
\end{equation*}
It follows immediately that $t(\mathcal{F}, T^{-(m,m')}(\mathcal{F},h),h) = \marginalpolytopemh{m}$ for all $m \leq m'$.
\end{definition}

\begin{restatable}{theorem}{thmnumbercomponents}
\label{thm:numbercomponents}
 Let $\mathcal{F}=\{\mathcal{F}_1, \cdots, \mathcal{F}_n\}$ be an ordered set of quantum graphs and let $h$ be a matrix $|\mathcal{F}| \times r$ of analytic functions such that $\{t(\mathcal{F}, \cdot, h  )\}$ is a set of independent analytic functions. Let $u \in \goodvalues$ and let $m \geq m_0(\mathcal{F},u,h)$. Then there is no new connected component $N$ such that
\begin{equation*}
N \subset \realfeasibleregionmlh{m+1}{h} \setminus \realfeasibleregionmlh{m}{h}.
\end{equation*}
Hence the number of connected components of $\realfeasibleregionmlh{m+1}{h}$  is not higher than the number of connected components of $\realfeasibleregionmlh{m}{h}$.
\end{restatable}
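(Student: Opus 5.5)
The plan is to argue by contradiction. Suppose $N$ is a connected component of $\realfeasibleregionmlh{m+1}{h}$ with $N \cap \realfeasibleregionmlh{m}{h} = \emptyset$, where $\realfeasibleregionmlh{m}{h}$ is viewed inside $\realfeasibleregionmlh{m+1}{h}$ via the refinement maps $\theta(\cdot,\lambda,l)$. Equivalently, $\realpodalfunctionspacel{m}$ is viewed as the face $\partial\realpodalfunctionspacel{m+1}$ where one coordinate of $\pi$ vanishes, together with the locus of step functions having two repeated rows. Since $u \in \goodvalues$ is a regular value, both level sets are analytic manifolds (Theorem~\ref{cor:regular_value_manifold}). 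Moreover, by Theorem~\ref{thm:pi_smoothness}, every fixed-$\pi$ slice $\realfeasibleregionmlpi{m+1}{\pi,h}$ with $\pi \in (\convexcombm{m+1})^\circ$ is an analytic manifold of regular points. The goal is to take an arbitrary point $(\sfmat,\pi) \in N$ and build a continuous path inside the level set $t(\mathcal{F},\cdot,h)=u$ that joins it to a point of $\realfeasibleregionmlh{m}{h}$. That point is either a step function with two identical rows, which equals some $\theta((\sfmat',\pi'),\lambda,l)$ by the coarsening operator $\psi$, or a point with some $\pi_l = 0$.

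The natural path is obtained by deforming the partition vector. Choose an index $l$ and shrink $\pi_l \to 0$, redistributing the mass to the other blocks along a segment $\pi(s)$ in $\convexcombm{m+1}$. At each time $s$, the point must be corrected in the $\sfmat$-coordinates so that the constraint $t(\mathcal{F},(\sfmat(s),\pi(s)),h)=u$ keeps holding. Locally this follows from the submersion normal form (Theorem~\ref{thm:smoothsubmesion}) applied to the map $\sfmat \mapsto t(\mathcal{F},(\sfmat,\pi(s)),h)$. That map is a submersion on the slice by Theorem~\ref{thm:pi_smoothness}, because $m+1 > m \geq m_0(\mathcal{F},u,h)$, so the slice has more parameters than $|\mathcal{F}|$. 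The implicit function theorem then gives a horizontal lift $\sfmat(s)$, for instance the one orthogonal to the fibres, solving $J_{\sfmat}\dot\sfmat = -\partial_\pi t \cdot \dot\pi$ with a Moore--Penrose right inverse. At $s=1$ the endpoint lies in $\partial\realpodalfunctionspacel{m+1} = \realpodalfunctionspacel{m}$ and still satisfies the constraints, so it belongs to $\realfeasibleregionmlh{m}{h}$. Connectivity of the path then contradicts $N \cap \realfeasibleregionmlh{m}{h}=\emptyset$. The assumption $u \notin \partial T^{(m',r)}(\mathcal{F},h)$, i.e.\ $u$ non-extremal, is used to guarantee $u \in \marginalpolytopemh{m}^\circ$, so the endpoint level set in size $m$ is non-empty and regular. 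Lemma~\ref{lem:jacobianrefinement} transfers the regularity between the two representations at the junction point.

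The main obstacle is global existence of the lift: the ODE for $\sfmat(s)$ could blow up, with $\|\sfmat(s)\|\to\infty$, before $s=1$, since the slices are non-compact subsets of $\mathbb{R}^{r m(m+1)/2}$. I would try to handle this in two ways. First, by preferring the lift that minimizes $\|\dot\sfmat\|$ and bounding $\|\partial_\pi t\|$ on bounded sets, to obtain a Gr\"onwall-type estimate. Second, if blow-up cannot be excluded, by choosing the index $l$ and the redistribution direction adaptively. Here Sard's theorem applies as in the proof of Theorem~\ref{thm:pi_smoothness}: the map $\phi_i$ sending a point to $\pi_l$ has regular values of full measure, so the path can be restarted from regular values and the $\pi_l$-coordinate decreased monotonically. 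If that also fails, the fallback is an alternative endpoint: move within the slice instead to merge two rows, driving $\sfmat_{l,\cdot}\to\sfmat_{l+1,\cdot}$. This is again possible while the constraint Jacobian stays full rank, and by Lemma~\ref{lem:m0} it produces an element of $\theta(\realfeasibleregionmlh{m}{h})$.

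Once every component of $\realfeasibleregionmlh{m+1}{h}$ is shown to meet $\realfeasibleregionmlh{m}{h}$, a count follows. Each connected component of $\realfeasibleregionmlh{m}{h}$ is sent by the continuous maps $\theta$ into a single component of $\realfeasibleregionmlh{m+1}{h}$. So the map from components of the former to components of the latter is surjective, which gives the inequality on the number of components.
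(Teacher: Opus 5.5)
\textbf{The gap: global existence of the lift.} Your argument needs the horizontal lift $s\mapsto(\sfmat(s),\pi(s))$ to exist on all of $[0,1]$ and to converge as $s\to1$. You do not prove this, and it is not a technicality: it is the whole content of the theorem.

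Theorem~\ref{thm:pi_smoothness} only says that the fibre map $\sfmat\mapsto t(\mathcal{F},(\sfmat,\pi),h)$ is a submersion at every point of every slice with $\pi$ interior. Under your contradiction hypothesis, this makes $(\sfmat,\pi)\mapsto\pi_l$ a submersion on $N$, so $\pi_l(N)$ is an open interval $(a,b)\subseteq(0,1)$. But $\sfmat$ ranges over a non-compact space, so this projection is not proper and no Ehresmann-type path lifting is available. Nothing in your proposal excludes $a>0$ with $\|\sfmat\|\to\infty$ as $\pi_l\downarrow a$ along $N$, or $a=0$ with $\sfmat$ unbounded as $\pi_l\to0$. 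In either case your ODE leaves every compact set before reaching the face.

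None of your three remedies rules this out:
\begin{itemize}
\item \emph{Gr\"onwall.} You need a global bound on $\|\dot\sfmat\|\le\|J_{\sfmat}^+\|\,\|\partial_\pi t\|\,\|\dot\pi\|$. However $\|J_{\sfmat}^+\|=1/\sigma_{\min}(J_{\sfmat})$ has no a priori bound, because regularity at each point of a non-compact level set gives no uniform lower bound on $\sigma_{\min}$. Bounding $\partial_\pi t$ ``on bounded sets'' is circular, since boundedness of $\sfmat(s)$ is exactly what must be shown.
\item \emph{Sard and restarts.} Regular values of $\phi_i$ only control which slices are manifolds. No sequence of restarts can push $\pi_l$ below $\inf\pi_l(N)$.
\item \emph{Row merging.} Driving $\sfmat_{l,\cdot}\to\sfmat_{l+1,\cdot}$ is again a constrained path in the same non-compact fibre, so it faces the same escape problem. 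Moreover, Lemma~\ref{lem:m0} asserts that there are \emph{no} repeated rows at size $m_0$, so it cannot be what produces a point of $\theta(\realfeasibleregionmlh{m}{h})$.
\end{itemize}

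A telling symptom is that non-extremality of $u$ does no work in your proof. The only use you make of it, $u\in\marginalpolytopemh{m}^\circ$, already follows from $m\ge m_0(\mathcal{F},u,h)$. Non-emptiness of $\realfeasibleregionmlh{m}{h}$ also says nothing about whether your particular path lands in it.

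\textbf{Comparison with the paper.} The paper packages the same idea (push the splitting weight to zero using that the $\pi$-projection of the level set is a submersion) as an extremal argument rather than a flow. It sets
$\lambda^*=\inf\{\lambda : N\cap\realfeasibleregionmlpi{m+1}{\theta(\pi,\lambda,k),h}\neq\emptyset\}$
and invokes non-extremality of $u$ to exclude $\lambda^*=0$. Then, at a point $x\in N$ on the $\lambda^*$-slice, it uses the exponential map in the $\pi$-moving complement $O_x$ of $T_xN_1$ to reach some $\lambda<\lambda^*$, contradicting minimality. This needs only the local submersion property you already have. It also presupposes that the infimum is attained by a point of $N$, i.e.\ that a sequence in $N$ with $\lambda_n\downarrow\lambda^*$ has a convergent subsequence, which is the same non-escape property in another guise. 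So the step you flagged as the ``main obstacle'' is the crux under either packaging. To turn your proposal into a proof you must supply an actual boundedness or properness argument for $\sfmat$ along $N$, not a heuristic.
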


\begin{proof}
Note that
\begin{equation*}
\realfeasibleregionmlh{m+1}{h} = \bigcup_{\tiny \begin{array}{c}
    \pi \in \convexcombm{m} \\ \lambda \in [0,1]
\end{array} } \realfeasibleregionmlpi{m+1}{\theta(\pi,\lambda,k), h}.
\end{equation*}
So it suffices to show that, for $\lambda \in (0,1)$, the number of connected components of
$ \realfeasibleregionmlpi{m+1}{\theta(\pi,\lambda,k), h}$ is no higher than that of
$ \realfeasibleregionmlpi{m}{\pi, h}$ (recall $\theta(\pi,0,k) = \pi$ once restricted to size-$m$ step functions).

\textbf{Proof by contradiction.} Suppose there exists a new connected component
\begin{equation*}
N_1 \subset \realfeasibleregionmlpi{m+1}{\theta(\pi,\lambda_1,k), h}
\end{equation*}
for some $\lambda_1 \in (0,1)$ and $k \in [m]$. The existence of $N_1$ implies the existence of a new connected component $N \subset \realfeasibleregionmlh{m+1}{h} \setminus \realfeasibleregionmlh{m}{h}$ containing $N_1$.

\textbf{Key Claim:} Define
\begin{equation*}
    \lambda^* = \inf \{ \lambda \in [0,1] : N \cap \realfeasibleregionmlpi{m+1}{ \theta(\pi,\lambda,k), h } \neq \emptyset\}.
\end{equation*}
We claim that $\lambda^* > 0$.

\textbf{Proof of Claim:} Suppose $\lambda^* = 0$. Then for any $\epsilon > 0$, there exists $\lambda \in (0, \epsilon)$ such that $N \cap \realfeasibleregionmlpi{m+1}{ \theta(\pi,\lambda,k), h } \neq \emptyset$.

Construct a sequence $\{\lambda_n\}_{n=1}^{\infty}$ with $\lambda_n = 1/n$. For each $n$, since $\lambda_n \to 0^+$ and $\lambda^* = 0$, we can choose
\begin{equation*}
x_n \in N \cap \realfeasibleregionmlpi{m+1}{ \theta(\pi,\lambda_n,k), h }.
\end{equation*}

The sequence $\{x_n\}$ satisfies:
\begin{enumerate}
    \item[(A)] $x_n \in N \subset \realfeasibleregionmlh{m+1}{h} \setminus \realfeasibleregionmlh{m}{h}$, which means
    \[x_n \in T^{-(m_0,m+1)}(\mathcal{F},h) \setminus T^{-(m_0,m)}(\mathcal{F},h).\]

    \item[(B)] $t(\mathcal{F}, x_n, h) = u$ for all $n$, since $x_n \in \realfeasibleregionmlpi{m+1}{ \theta(\pi,\lambda_n,k), h }$.

    \item[(C)]  $d(x_n, T^{-(m_0,m)}(\mathcal{F},h)) \to 0$ as $n \to \infty$, where $d(x,C)$ denotes the distance between a point $x \notin C$ and a set $C$.

    To see this: as $\lambda_n \to 0$, the partition $\theta(\pi, \lambda_n, k)$ approaches a partition with a duplicated $k$-th row/column, so step functions with such partitions are effectively size-$m$ step functions, belonging to $T^{-(m_0,m)}(\mathcal{F},h)$.
\end{enumerate}

From (A), (B), and (C): the sequence $\{x_n\}$ consists of points outside $T^{-(m_0,m)}(\mathcal{F},h)$ that approach $T^{-(m_0,m)}(\mathcal{F},h)$ arbitrarily closely while maintaining $t(\mathcal{F}, x_n, h) = u$. So $u$ can be approximated by points outside the image of size-$m$ step functions, i.e.\ $u \in \partial T^{(m_0)}(\mathcal{F},h)$, contradicting $u \in T^{(m_0)}(\mathcal{F},h)^\circ$. Therefore $\lambda^* > 0$.

By Theorem \ref{thm:pi_smoothness}, $\realfeasibleregionmlpi{m+1}{\theta(\pi,\lambda^*,k), h}$ is a smooth manifold of dimension $r\frac{m(m+1)}{2}+m-1-|\mathcal{F}|$ for any $\lambda \in (0,1)$ and $k \in [m]$. Let $x = (\sfmat, \theta(\pi,\lambda^*,k) ) \in N_1 = N \cap \realfeasibleregionmlpi{m+1}{\theta(\pi,\lambda^*,k), h}$, so that $T_{x} N_1 \subseteq T_{x}\realfeasibleregionmlh{m+1}{h}$.

Let $O_x$ be the orthogonal complement of $T_{x} N_1$ in $T_{x}\realfeasibleregionmlh{m+1}{h}$, of dimension
\begin{equation*}
dim(O_x) = \dim(T_{x}\realfeasibleregionmlh{m+1}{h}) - \dim(T_{x} N_1) = m-1.
\end{equation*}
Vectors in $O_x$ are precisely the variations that can move the partition vector $\pi$.

Using the exponential map $\text{Exp}_x: V \to U_x$, where $V \subset T_x \realfeasibleregionmlh{m+1}{h}$ is a neighborhood of $0$ and $U_x$ a neighborhood of $x$, we can find vectors $v \in O_x$ such that $\text{Exp}_x(v)$ has the form $(\sfmat',\theta(\pi,\lambda_1,k))$ in $U \cap N$ with $\lambda_1 < \lambda^*$ -- contradicting the minimality of $\lambda^*$. Hence $\realfeasibleregionmlh{m+1}{h}$ cannot have new connected components.

\end{proof}
The next theorem upgrades this topological stability to an optimization statement: there is no global minimum of $\realoptimalsolutionm{m+1}{f_d,h}$ better than $\realoptimalsolutionm{m}{f_d,h}$ once $m>m_1(\mathcal{F},u,h)$.

\begin{theorem}
\label{thm:nonessentiallocal}
 Let $\mathcal{F}=\{\mathcal{F}_1, \cdots, \mathcal{F}_n\}$ be an ordered set of quantum graphs and let $h$ be a matrix $|\mathcal{F}| \times r$ of analytic functions such that $\{t(\mathcal{F} \cup \{E_s\}, \cdot, [h | [f_0]] )\}$ is a set of independent analytic functions, where $[h | [f_0]]$ is the matrix obtained from $h$ by adjoining the row vector $[f_0]$ at the bottom. Let $u \in \goodvalues$ and let $m_1 = m_1(\mathcal{F},u,h)$. Let $W_{m_1}$ be a global minimum of $f_d$ in $\realoptimalsolutionm{m_1}{f_d, h}$ and let $c_{m_1} = f_d(W_{m_1})$. Then
\begin{itemize}
    \item There is no global minimum of $f_d$ in $\realfeasibleregionml{m}$ with a better value than $c_{m_1}$ for $m>m_1$.
    \item If there is a global minimum in $\realfeasibleregionml{m}$ with value $c_{m_1}$ for some $m>m_1$, then this global minimum is necessarily connected to a global minimum of $\realfeasibleregionml{m_1}$.
\end{itemize}

\end{theorem}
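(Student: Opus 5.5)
The plan is to augment the constraint family by the density function itself, so that Theorem~\ref{thm:numbercomponents} applies to the augmented family. Since $f_d = t(E_s,\cdot,[f_0])$ is an $h$-subgraph density, define $\mathcal{F}' = \mathcal{F}\cup\{E_s\}$, $h' = [h\,|\,[f_0]]$ and, for a value $c$, $u'(c)=(u,c)$. By hypothesis $\{t(\mathcal{F}',\cdot,h')\}$ is a set of independent analytic functions, and by definition of $m_1(\mathcal{F},u,h)$ we have $r\frac{m_1(m_1+1)}{2}>|\mathcal{F}|+1=|\mathcal{F}'|$. The sublevel and level sets of $f_d$ on $\realfeasibleregionmlh{m}{h}$ are then fibres of the augmented marginal map:
\[
\{W\in\realfeasibleregionmlh{m}{h}: f_d(W)=c\}=S^{(m,r)}_{\mathbb{R}}(\mathcal{F}',u'(c),h').
\]

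For the first bullet, argue by contradiction. Suppose that for some $m>m_1$ there is $W_m\in\realfeasibleregionmlh{m}{h}$ with $c_m=f_d(W_m)<c_{m_1}$. The value $c_m$ is not attained on $\realfeasibleregionmlh{m_1}{h}$, so $u'(c_m)\notin T^{(m_1,r)}(\mathcal{F}',h')$ while $u'(c_m)\in T^{(m,r)}(\mathcal{F}',h')$. The fibre $S^{(m,r)}_{\mathbb{R}}(\mathcal{F}',u'(c_m),h')$ is therefore a nonempty set with no point at size $m_1$. Walking up the chain $m_1,m_1+1,\dots,m$, at the first size where it becomes nonempty it is entirely a new component in the sense of Theorem~\ref{thm:numbercomponents}, applied to $(\mathcal{F}',u'(c_m),h')$.

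To legitimately invoke that theorem I need $u'(c_m)\in\Omega^{(r)}(\mathcal{F}',h')$ and $m_1\ge m_0(\mathcal{F}',u'(c_m),h')$, i.e.\ $u'(c_m)$ interior to $T^{(m_1,r)}(\mathcal{F}',h')$. This is the main obstacle, since $u'(c_m)$ lies \emph{outside} that polytope. I would handle it by perturbation instead of applying the theorem at $c_m$ itself:
\begin{itemize}
\item Since $u\in\goodvalues$ is interior and the regular set is dense (Theorem~\ref{thm:denseinterior} and Sard), pick $c$ slightly above $c_m$ but below $c_{m_1}$, with $u'(c)$ a regular, non-extremal value for $(\mathcal{F}',h')$ at level $m$.
\item By continuity of $f_d$ on the connected component of $\realfeasibleregionmlh{m}{h}$ through $W_m$ (Lemma~\ref{lem:continuity}), the level $c$ is attained at size $m$ but not at size $m_1$, because $c<c_{m_1}=\min$ over size $m_1$.
\item Run the $\lambda^*$ argument of Theorem~\ref{thm:numbercomponents} directly: letting the splitting parameter $\lambda\to0$ pushes points of the new component toward size-$m_1$ step functions. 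By continuity of $t(\mathcal{F}',\cdot,h')$ in $\lambda$, this produces a size-$m_1$ point at level $(u,c)$, contradicting $c<c_{m_1}$.
\item Otherwise $\lambda^*>0$, and the exponential-map step lowers $\lambda$, again a contradiction.
\end{itemize}

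For the second bullet, suppose a global minimum $W_m$ at size $m>m_1$ has value $c_{m_1}$. Apply the same statement to the fibre $S^{(m,r)}_{\mathbb{R}}(\mathcal{F}',u'(c_{m_1}),h')$. By Theorem~\ref{thm:numbercomponents} it contains no new component relative to size $m_1$. Hence the component containing $W_m$ meets $\realfeasibleregionmlh{m_1}{h}$ through refinements, at a point of value $c_{m_1}$, which is a global minimum of $\realfeasibleregionml{m_1}$. This gives the required connection: $W_m$ lies in the same connected equal-value set as a refinement $\theta(W_{m_1},\lambda,k)$. Regularity of $u'(c_{m_1})$ at the minimum is delicate, because minima are critical points of the augmented map. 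There I would instead use the sublevel set $\{f_d\le c_{m_1}+\epsilon\}$ and let $\epsilon\to0$, extracting the connection from nested compact components.
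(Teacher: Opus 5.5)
\textbf{There is a genuine gap: the proposed repair does not work.}

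Your route is the paper's route. You adjoin $E_s$ with the row $[f_0]$, read level sets of $f_d$ on $S^{(m,r)}_{\mathbb{R}}(\mathcal{F},u,h)$ as fibres of $t(\mathcal{F}',\cdot,h')$, take a level $c\in(c_m,c_{m_1})$, and contradict the absence of new components. You are right that Theorem~\ref{thm:numbercomponents} cannot simply be quoted. Let $m^\dagger$ be the first size at which the fibre over $(u,c)$ is nonempty. Then $(u,c)\notin T^{(m^\dagger-1,r)}(\mathcal{F}',h')$, so $m^\dagger-1<m_0(\mathcal{F}',(u,c),h')$. The paper's own proof invokes the theorem at exactly this transition without addressing this.

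\textbf{The $\lambda^*=0$ branch.} The original contradiction used interiority of the value in the smaller marginal polytope, which is precisely what is unavailable here. Your substitute is a limit point at level $(u,c)$ obtained ``by continuity in $\lambda$''. That requires the points $x_n$ to converge in $\mathcal{W}^{(m^\dagger,r)}_{\mathbb{R}}$, and nothing bounds them. As the weight $\lambda_n\pi_k$ of the split-off block tends to $0$, its entries may grow like $\lambda_n^{-1/2}$. Terms such as $f_0(A_{kj})\lambda_n\pi_k\pi_j$ with $f_0(x)=x^2$, or $A_{kj}A_{kl}\lambda_n\pi_k\pi_j\pi_l$ from two edges meeting at a vertex placed in that block, then stay of order one. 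So $t(\mathcal{F}',x_n,h')=(u,c)$ need not be inherited by any step function of size $m^\dagger-1$.

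\textbf{The $\lambda^*>0$ branch} fails more fundamentally. Moving along $O_x$ lowers the splitting parameter only if the partition map restricted to $N$ is open at the point $x$ realizing $\lambda^*$. Without compactness, $\lambda^*$ need not even be attained.
\begin{itemize}
\item The configuration you must exclude is a strict local minimizer of $f_d$ at size $m^\dagger$ with no zero-weight block and no repeated rows.
\item Just above its value, the fibre has a small compact component in the interior.
\item On that component the splitting parameter attains a positive minimum which, by definition, no move within $N$ can lower.
\end{itemize}
So this step presupposes the conclusion. Consistent with this, nothing in your $\lambda^*$ argument uses that the lower size is at least $m_1$. If it were valid, it would equally forbid the minimum from dropping between $m_0$ and $m_0+1$. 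The paper itself reports that this happens in the entropy setting: the Neeman--Radin--Sadun tripodal optimum, with $m_0=2$ and $m_1=3$.

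\textbf{The second bullet} has an additional problem.
\begin{itemize}
\item $(u,c_{m_1})$ is a critical value of $t(\mathcal{F}',\cdot,h')$: at a minimizer, $\nabla f_d$ lies in the span of the constraint gradients.
\item It is also a boundary point of $T^{(m_1,r)}(\mathcal{F}',h')$.
\item Hence no smoothness or component statement is available at that level.
\end{itemize}
Your fallback via $\{f_d\le c_{m_1}+\epsilon\}$ would have to be run on level sets, since sublevel sets are not fibres of the augmented map. It would then need compactness of these sets to extract a connected limit as $\epsilon\to 0$. That compactness fails for the same reason as above: blocks of small weight can carry arbitrarily large entries at bounded cost.
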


\begin{proof}
\textbf{First case.}
By contradiction, let $W_{m_1+1} \in \realfeasibleregionml{m+1}$ be a global minimum of $f_d$ such that $c_{m_1+1}=f_d(W_{m_1+1}) < f_d(W_{m_1})$.

\noindent Note that:
\begin{itemize}
    \item $v=[u_1,\cdots,u_{|\mathcal{F}|}, c_{m+1}] \in T^{(m+1)}(\mathcal{F} \cup \{E_s\},[h | [f_0]])$ is a critical value of $t(\mathcal{F} \cup \{E_s\}, \cdot, [h | [f_0]])$.
    \item $\{t(\mathcal{F} \cup \{E_s\}, \cdot, [h | [f_0]])\}$ is a set of independent functions, so we can pick $v'=[u'_1,\cdots,u'_{|\mathcal{F}|}, c']$ arbitrarily close to $v=[u_1,\cdots,u_{|\mathcal{F}|}, c_{m+1}]$ such that $v' \in \Omega^{(m+1,r)}(\mathcal{F} \cup \{E_s\}, [h | [f_0]] )$ (a regular, non-extremal value of $t(\mathcal{F} \cup \{E_s\}, \cdot, [h | [f_0]])$ in $\podalfunctionspacel{m+1}$).
\end{itemize}
Pick $c_{m+1}<c<c_{m_1}$. Then
\[
N= S^{(m+1,r)}_\mathbb{R}(\mathcal{F}, u,h ) \cap f_d^{-1}(c)  \neq \emptyset
\]
and
\[
N \cap  S^{(m,r)}_\mathbb{R}(\mathcal{F}, u,h ) = \emptyset.
\]
By Theorem \ref{thm:denseinterior}, $\Omega^{(m+1,r)}(\mathcal{F} \cup \{E_s\}, [h | [f_0]] )^\circ$ is dense in $\Omega^{(m+1,r)}(\mathcal{F} \cup \{E_s\}, [h | [f_0]] )$, so we deform $N$ to $N'$ by picking $v'=[u'_1,\cdots,u'_{|\mathcal{F}|}, c']$ sufficiently close to $v=[u_1,\cdots,u_{|\mathcal{F}|}, c]$, with $c_{m+1} < c' \leq  c$, such that $v' \in \Omega^{(m+1,r)}(\mathcal{F} \cup \{E_s\}, [h | [f_0]] )$  and
\[
N'= S^{(m+1,r)}_\mathbb{R}(\mathcal{F}, u',h ) \cap f_d^{-1}(c') \neq \emptyset
\]
and
\[
N' \cap  S^{(m,r)}_\mathbb{R}(\mathcal{F}, u',h ) = \emptyset.
\]
\noindent Thus $N'$ is a new connected component of $S_{\mathbb{R}}^{(m+1,r,d)}(\mathcal{F} \cup \{E_s\},v',[h|[f_0]]))$ as $m$ increases to $m+1$, contradicting Theorem \ref{thm:numbercomponents}. Hence no such $W_{m+1}$ exists.

\textbf{Second case.} By contradiction, let $W_{m_1+1} \in \realfeasibleregionml{m+1}$ be a global minimum of $f_d$ with $f_d(W_{m_1+1}) = c_{m_1}$, and suppose no connected region $C$ in $\realfeasibleregionml{m_1+1}$ connects $W_{m_1+1}$ to a $W_{m_1}$ with $f_d(C)=c_{m_1}$. Pick $c_{m_1}<c$. Then
\[
N= S^{(m+1,r)}_\mathbb{R}(\mathcal{F}, u,h ) \cap f_d^{-1}(c)  \neq \emptyset
\]
and
\[
N \cap  S^{(m,r)}_\mathbb{R}(\mathcal{F}, u,h ) = \emptyset.
\]
Deforming $N$ to $N'$ exactly as in the previous case yields a contradiction with Theorem \ref{thm:numbercomponents}. Hence such a global minimum must be disconnected from $\realfeasibleregionml{m_1}$.

\end{proof}
The second case is, in fact, easy to witness directly in the Euclidean topology: it is exactly the case $\theta((\sfmat,\pi), \lambda,k)$ for a global minimum $(\sfmat,\pi)$, since $\theta((\sfmat,\pi), \lambda,k)$ and $(\sfmat,\pi)$ are always connected.

So the only way the second case can occur is if a global minimum fails to be isolated. Once we instead prove, via the condition of Lemma \ref{lem:isolatedminimum}, that every global minimum of $f_d$ in $\realfeasibleregionmlh{m_1}{h}$ \emph{is} isolated, the second case becomes impossible, and we obtain the following corollary: $f_d$ has no new global minimum in $\realfeasibleregionmlh{m}{h}$ for $m>m_1$.
\begin{corollary}
\label{cor:nonessentiallocal}
If the hypotheses of Theorem \ref{thm:nonessentiallocal} hold and all global minima of $f_d$ in $\realfeasibleregionmlh{m_1}{h}$ are isolated in $\realpodalfunctionspacel{m_1} $, then there is no new global minimum of $f_d$ in  $\realfeasibleregionml{m}$ for $m > m_1$.
\end{corollary}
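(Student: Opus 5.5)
The corollary follows by combining the two cases of Theorem~\ref{thm:nonessentiallocal} with the isolation hypothesis; I argue by contradiction. Suppose that for some $m>m_1$ there is a global minimum $W_m$ of $f_d$ on $\realfeasibleregionml{m}$ that is \emph{new}, meaning it is not the refinement-image of a global minimum in $\realfeasibleregionmlh{m_1}{h}$. Let $c_{m_1}$ be the common value of the global minima at size $m_1$.

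By the first case of Theorem~\ref{thm:nonessentiallocal}, $f_d(W_m)<c_{m_1}$ is impossible. Since refinement embeds $\realfeasibleregionmlh{m_1}{h}$ into $\realfeasibleregionml{m}$ without changing $f_d$, we also have $f_d(W_m)\le c_{m_1}$, so $f_d(W_m)=c_{m_1}$.

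The second case of Theorem~\ref{thm:nonessentiallocal} then provides a connected set $C\subset\realfeasibleregionml{m}$ on which $f_d\equiv c_{m_1}$, joining $W_m$ to some $\theta$-iterated refinement of a global minimum $(\sfmat^*,\pi^*)\in\realfeasibleregionmlh{m_1}{h}$. I will use the coarsening operator $\psi$ to pull $C$ back near $(\sfmat^*,\pi^*)$. Points of $C$ close to the refined copy of $(\sfmat^*,\pi^*)$ are, as functions, close to $V^*$ in $L^1$, since both Euclidean and $L^1$ distances are controlled by $\|\sfmat-\sfmat'\|$ and $\|\pi-\pi'\|$ for step functions with bounded entries. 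Hence $C$ contains feasible step functions $W\neq V^*$ (as functions) arbitrarily close to $V^*$ with $f_d(W)=c_{m_1}=f_d(V^*)$. That $W\neq V^*$ holds because $W_m$ was assumed new: if $C$ consisted only of representations of $V^*$, then $W_m$ would be a refinement of $V^*$.

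It remains to reach a contradiction with isolation. The hypothesis says each global minimum is isolated in $\realpodalfunctionspacel{m_1}$, that is, in the $m_1$-parameter Euclidean topology. The points of $C$ near $V^*$, however, live in $\realpodalfunctionspacel{m}$.

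\textbf{Main obstacle.} The key step is transferring isolation from size $m_1$ to size $m$. My first attempt is to show that near a refinement of $(\sfmat^*,\pi^*)$, every feasible point of $\realfeasibleregionml{m}$ either coarsens via $\psi$ to a point of $\realfeasibleregionmlh{m_1}{h}$ near $(\sfmat^*,\pi^*)$, where isolation applies directly, or is a genuine off-step perturbation. For the second kind, I use Lemma~\ref{lem:exactdecoupling}: the value of $f_d$ splits into a step part plus an off-step part, and a positive off-step part forces $f_d>c_{m_1}$. This is exactly where the isolation of Theorem~\ref{thm:isolatedgraphon} (via Lemma~\ref{lem:isolatedminimum} and the margin criterion) is the natural tool, and I expect to invoke it in $L^1$ if the Euclidean isolation alone proves insufficient.

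Either branch contradicts $f_d\equiv c_{m_1}$ on $C$ near $V^*$. So no such $W_m$ exists, and every global minimum at size $m>m_1$ is a refinement of one at size $m_1$.
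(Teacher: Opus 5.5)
Your outline follows the same route as the paper, namely the remark immediately before the corollary. Case (i) of Theorem~\ref{thm:nonessentiallocal} together with the refinement embedding forces $f_d(W_m)=c_{m_1}$. Case (ii) supplies an equal-value connected set $C$ through a copy of $V^*$. Isolation is then supposed to forbid $C$.

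\textbf{The main gap.} The step you flag as the main obstacle is a genuine gap, and your proposal does not close it under the corollary's hypotheses. Those hypotheses only say that $(A^*,\pi^*)$ is isolated among feasible points of $\mathcal{W}^{(m_1,r)}_{\mathbb{R}}$. Near the copy of $V^*$ in $\mathcal{W}^{(m,r)}_{\mathbb{R}}$, the set $C$ may consist entirely of step functions that split a block of $V^*$ into sub-blocks with slightly different rows, or that grow a new block of small weight whose row is not a row of $A^*$. Such points are not in the image of $\mathcal{W}^{(m_1,r)}_{\mathbb{R}}$, and $\psi$ does not return them to size $m_1$. As functions they are $V^*+\bar\eta+\eta^\perp$ with $\eta^\perp\neq 0$.

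Euclidean isolation at size $m_1$ carries no information about these off-step directions. It does not even give positive definiteness of the size-$m_1$ Lagrangian Hessian. Even positive definiteness would not give the coercivity $D^2\mathcal{L}(V^*)[\eta^\perp,\eta^\perp]\ge c_0\|\eta^\perp\|_2^2$: by Lemma~\ref{lem:exactdecoupling} the off-step part is a separate quadratic form, which is exactly why the paper needs the additional margin criterion.

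Your fallback does not repair this. Lemma~\ref{lem:exactdecoupling} splits the quadratic form $D^2\mathcal{L}(V^*)$, not $f_d$ itself. Converting it into $f_d(W)>f_d(V^*)$ for feasible $W$ is exactly Theorem~\ref{thm:isolatedgraphon}. Its hypotheses (positive definiteness on the tangent space and $\min_{i,j,k}c_{ij;k}>\Gamma$) are not assumptions of the corollary.

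What your argument can establish is the corollary under a stronger hypothesis: each global minimum $V^*$ is isolated \emph{as a function} in $L^1$ among feasible step functions of all sizes. An example is isolation in $S^{(r)}_{\mathbb{R}}(\mathcal{F},u,h)$ as in the second part of Theorem~\ref{thm:POD}, which the hypotheses of Theorem~\ref{thm:isolatedgraphon} guarantee. The paper's one-line justification (``the only way the second case can occur is if a global minimum fails to be isolated'') passes over the same mismatch between sizes $m_1$ and $m$. You located the weak point correctly, but you did not fill it.

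\textbf{A further repair.} With isolation in that stronger sense, your argument closes after one more fix. Let
$R:=\{x\in\mathcal{W}^{(m,r)}_{\mathbb{R}}:\inf_{\sigma\in\Sigma}\|x-V^{*\sigma}\|_1=0\}$.
This set contains much more than iterated $\theta$-refinements (block permutations, zero-weight blocks with arbitrary entries), so ``new'' should be defined as $W_m\notin R$.

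Newness of $W_m$ only gives $C\not\subset R$; it does not by itself produce points of $C\setminus R$ \emph{near} $V^*$. For that, use the following:
\begin{enumerate}
\item $R$ is the zero set of a function continuous on $\mathcal{W}^{(m,r)}_{\mathbb{R}}$, so $C\cap R$ is a nonempty, relatively closed, proper subset of the connected set $C$.
\item Hence $C\cap R$ is not relatively open, and there are $x\in C\setminus R$ with $\inf_{\sigma}\|x-V^{*\sigma}\|_1$ positive and arbitrarily small.
\item Since $f_d$, the constraints and $\|\cdot\|_1$ are invariant under relabeling, $L^1$-isolation of $V^*$ holds with the same radius at every $V^{*\sigma}$.
\end{enumerate}
Such an $x$ is feasible with $f_d(x)=c_{m_1}$, which gives the contradiction.
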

A word on why the condition $m > m_1(\mathcal{F}, u, h)$ in Corollary \ref{cor:nonessentiallocal} is stated with $m_1$ rather than $m_0$. The condition is what lets us invoke Theorem \ref{thm:numbercomponents} to rule out new local minima of $f_d$ appearing in $\realfeasibleregionml{m}$ for $m>m_1$; in other words, $m_1$ is the largest step-function size at which we still need to search for global minima of $f_d$, since no larger size can introduce a new one. We originally expected the search to stop already at size $m_0(\mathcal{F}, u, h)$, both because the article that originated the RRS conjecture \cite{radin2014asymptotics} verified it numerically only at that size, and because our working assumption was that local minima are only ever created when the step-function space grows from $m$ to $m+1$, through new connected components or bifurcations -- which would have made $m \geq m_0(\mathcal{F}, u, h)$ sufficient. That assumption turned out to be too optimistic: Neeman, Radin, and Sadun \cite{neeman2024existence} exhibit a tripodal entropy-maximizing graphon, for edge density $e$ below $\frac{3-\sqrt{3}}{6} \approx 0.21$ with triangle density slightly below $e^3$, for which $m_0=2$ but $m_1=3$ -- confirming that the search must in general be carried out at size $m_1$, not $m_0$.

\subsection{Proof of the principle of optimization of density functions}
\label{sec:mainresult}

We now have everything we need to prove POD.

\thmPOD*

\begin{proof}

By Corollary \ref{cor:nonessentiallocal}, the global minimum $(\sfmat^*,\pi^*)$ of $f_d$ in $\realfeasibleregionmlh{m_1(\mathcal{F},u,h)}{h}$ is already a global minimum over $\cup_{m \geq m_1(\mathcal{F},u,h)} \realfeasibleregionmlh{m}{h}$.

We first rule out a non-step global minimum among step functions of \emph{any} size: suppose, for contradiction, that a non-step function $W_1$ is a global minimum with $c= f_d(W_1) < f_d(W_{m_1})=a$, where $W_{m_1}$ is the global minimum of $f$ in $\realfeasibleregionmlh{m_1}{h}$. By Lemma \ref{lem:continuity}, $f_d$ is continuous in $L^1(\prod_{k=1}^r [0,1]^2)$, so the level set $B(\epsilon)=f^{-1}([c,\epsilon)) \cap \realfeasibleregionlh{h}$ is open for every $\epsilon >0$. By density of the step functions in the $L^1(\prod_{k=1}^r  [0,1]^2)$ topology, there is a step function $W_2  \in B(\epsilon)$ with $f_d(W_2) < a = f_d(W_{m_1})$ for $\epsilon$ small enough -- impossible, since $W_{m_1}$ is already the global minimum among step functions. Hence $W_{m_1}$ is a global minimum of $f_d$ on $\realfeasibleregionlh{h}$, and $\realoptimalsolutionm{m_1}{f_d,h} \subseteq \realoptimalsolutionf{f_d, h}$.

Now assume in addition that all global minima are isolated; we claim $\realoptimalsolution{f_d,h}$ admits no non-step solution either, by the same argument used for Corollary \ref{cor:nonessentiallocal}.

By contradiction, let $W_\infty \in S^{(r)}_{\mathbb{R}}(\mathcal{F},u,h)$ be a non-step function with $c_\infty=f_d(W_{\infty}) = f_d(W_{m_1})$. Pick $c>c_{\infty}$ sufficiently close to $c_{\infty}$, and set
\[
N= S^{(r)}_\mathbb{R}(\mathcal{F}, u,h ) \cap f_d^{-1}(c)  \neq \emptyset
\]
and choose $m>m_1$ such that
\[
N \cap  S^{(m,r)}_\mathbb{R}(\mathcal{F}, u,h ) = \emptyset
\]
and
\[
N \cap  S^{(m+1,r)}_\mathbb{R}(\mathcal{F}, u,h ) \neq \emptyset
\]
Such $c$ and $m$ exist precisely because all global minima of $f_d$ in $\realfeasibleregionmlh{m_1}{h}$ are isolated.
By Theorem \ref{thm:denseinterior}, $\Omega(\mathcal{F} \cup \{E_s\},[h|[f_0]])^\circ$ is dense in $\Omega(\mathcal{F} \cup \{E_s\},[h|[f_0]])$, so we deform $N$ to $N'$ by picking $v'=[u'_1,\cdots,u'_{|\mathcal{F}|}, c']$ sufficiently close to $v=[u_1,\cdots,u_{|\mathcal{F}|}, c]$, with $c_{\infty} < c' \leq c$, such that $v' \in \Omega(\mathcal{F} \cup \{E_s\},[h|[f_0]])$ and
\[
N'= S^{(r)}_\mathbb{R}(\mathcal{F}, u',h ) \cap f_d^{-1}(c') \neq \emptyset
\]
and
\[
N' \cap  S^{(m+1,r)}_\mathbb{R}(\mathcal{F}, u',h ) \neq \emptyset
\]
and
\[
N' \cap  S^{(m,r)}_\mathbb{R}(\mathcal{F}, u',h ) = \emptyset
\]
Such $c'$ and $u'$ exist because all global minima in $W_{\mathbb{R}}^{(m,r)}(\mathcal{F},u',f_d,h)$ are strict once $u'$ is sufficiently close to $u$ and $c'$ sufficiently close to $c_{\infty}$.

\noindent Thus $N' \cap  S^{(m+1,r)}_\mathbb{R}(\mathcal{F}, u',h )$ is a new connected component of $S_{\mathbb{R}}^{(m+1,r)}(\mathcal{F} \cup \{E_s\},v' \\ ,[h|[f_0]]))$ as $m$ increases to $m+1$, contradicting Theorem
\ref{thm:numbercomponents}. Hence no such $W_{\infty}$ exists.

\end{proof}

Computing a global minimum $\realoptimalsolutionf{f_d,h}$ therefore reduces to two steps: compute $m_1=m_1(\mathcal{F},u,h)$, then compute a global minimum $W^*$ of $f_d$ on the finite-dimensional region $\realfeasibleregionmlh{m_1}{h}$.

\subsection{Adaptation  of the principle of optimization of density functions to entropy functions}
\label{sec:caseI}

We now specialize POD to prove the RRS conjecture itself. Let $h$ be a $|\mathcal{F}| \times r$ matrix of analytic functions, and let
\[
W^{(r)}(\mathcal{F},u,h) = \argmin_{W \in S^{(r)}(\mathcal{F}, u, h)} I(W)
\]
where
\[
S^{(r)}(\mathcal{F}, u,h) = \{W \in \graphonspaceu \, :  \, t(\mathcal{F},W,h)=u\}
\]
and
\[
W^{(r,m)}(\mathcal{F},u,h) = \argmin_{W \in S^{(r)}(\mathcal{F}, u, h)} I(W)
\]
where
\[
\feasibleregionml{m} = \{W \in \podalfunctionspacel{m} \, :  \, t(\mathcal{F},W,h)=u\}
\]
Applying Theorem \ref{thm:POD} directly to prove that $W^{(r)}(\mathcal{F},u,h)$ are step functions runs into one obstacle: the constraints $0 \leq W \leq 1$. These make $S^{(r,m)}(\mathcal{F}, u,h)$ fail to be a smooth manifold, and $H_x I$ discontinuous at $x \in \partial \podalfunctionspacel{m}$ -- exactly the boundary behavior our differential-geometric machinery was not built to handle. The next lemma removes the obstacle by showing the constraints are never active in the first place: every solution $W^{(r,m)}(\mathcal{F},u,h)$ with $u \in \goodvalues$ already lies in the open region $S_{(0,1)}^{(r,m)}(\mathcal{F}, u,h)$.

\begin{restatable}{lemma}{lemrandomsolutions}
\label{lem:randomsolutions}
Let $\mathcal{F}$ be a set of quantum graphs, $h$ a $|\mathcal{F}| \times r$ matrix of analytic functions, $u \in \goodvalues$, and $m \geq m_0(\mathcal{F},u,h)$.
Then every local minimum of $I$ in $S^{(r,m)}(\mathcal{F},u,h)$ lies in $S_{(0,1)}^{(r,m)}(\mathcal{F}, u,h)$.
\end{restatable}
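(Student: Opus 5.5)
I will argue by contradiction using the infinite slope of $I_0'(u)=\log\frac{u}{1-u}$ at $0$ and $1$. Let $(\sfmat,\pi)$ be a local minimum of $I$ on $S^{(r,m)}(\mathcal{F},u,h)$ and suppose some entry $\sfmat_{ij;k}\in\{0,1\}$ with $\pi_i\pi_j>0$. Entries in blocks with $\pi_i\pi_j=0$ do not affect the step function and can be moved into $(0,1)$ freely. So I will coarsen first with $\psi$, and treat zero-weight rows separately, so that WLOG $\pi\in(\convexcombm{m})^\circ$.

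\textbf{Step 1: an admissible inward direction.} Let $B$ be the set of boundary entries, i.e.\ those with $\sfmat_{ij;k}\in\{0,1\}$. I want a curve $\gamma(s)$ in $S^{(r,m)}(\mathcal{F},u,h)$ starting at $(\sfmat,\pi)$ that moves every entry in $B$ strictly inward at linear rate, keeps all other entries in $(0,1)$, and satisfies $\|\gamma(s)-\gamma(0)\|=O(s)$. This is where the hypothesis $u\in\goodvalues$ enters. Since $u$ is a regular value, Theorem~\ref{thm:pi_smoothness} makes the fixed-$\pi$ slice a smooth manifold near $(\sfmat,\pi)$, with Jacobian $J$ of full rank $|\mathcal{F}|$. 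Because $u$ is non-extremal, $u$ lies in the interior of $T^{(m',r)}(\mathcal{F},h)$ for all relevant $m'$. I expect this to force the existence of a tangent vector $v\in\ker J$ with $v_e>0$ for $e\in B$ where $\sfmat_e=0$ and $v_e<0$ where $\sfmat_e=1$. The argument for this is Farkas' lemma. If no such $v$ exists, then some nonzero nonnegative combination of the sign-adjusted coordinate functionals on $B$ lies in the row space of $J$. That linear relation says that a linear function of $t(\mathcal{F},\cdot,h)$ is locally extremized at $(\sfmat,\pi)$ over the box $[0,1]$ in the $B$-coordinates, which would place $u$ on $\partial T^{(m,r)}(\mathcal{F},h)$. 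This contradicts $u\in\goodvalues$. Given $v$, I use the submersion chart of Theorem~\ref{thm:smoothsubmesion} to lift $s v$ to a curve $\gamma(s)=(\sfmat,\pi)+sv+O(s^2)$ inside the level set.

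\textbf{Step 2: first-order comparison of $I$.} Along $\gamma$, each non-boundary entry contributes $O(s)$ to $I(\gamma(s))-I(\gamma(0))$, since $I_0$ is $C^1$ on compact subsets of $(0,1)$. Each boundary entry $e\in B$ moving from $0$ to $s v_e+O(s^2)$ contributes $\pi_i\pi_j(2-\delta_{ij})\bigl(I_0(sv_e)-I_0(0)\bigr)\approx \pi_i\pi_j\, s v_e\log(s v_e)$, which is negative and of order $s\log s$. The same holds symmetrically at $1$. Since $s|\log s|\gg s$, we get $I(\gamma(s))<I(\gamma(0))$ for small $s>0$, contradicting local minimality. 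Hence $B=\emptyset$, and the minimum lies in $S_{(0,1)}^{(r,m)}(\mathcal{F},u,h)$.

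\textbf{Main obstacle.} The hard step is Step 1: turning ``$u$ is non-extremal and regular'' into a strictly inward feasible direction at a point sitting on the box boundary. The regularity in Definition~\ref{def:regularvalue} is stated for $t$ on all of $\realpodalfunctionspacel{m}$, so it gives transversality. The Farkas step, however, needs a one-sided (Mangasarian--Fromovitz type) condition relative to the faces of the box. The obstruction is a nonnegative multiplier vector $\lambda\neq 0$ on $B$ orthogonal to $\ker J$. I would try to show that such a $\lambda$ makes $\lambda^\top J^{+}t(\mathcal{F},\cdot,h)$ (with $J^{+}$ a left inverse on the row space) a supporting functional of $T^{(m,r)}(\mathcal{F},h)$ at $u$, at least locally. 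I would then use analyticity and Theorem~\ref{thm:denseinterior} to upgrade local support to $u\in\partial T^{(m,r)}(\mathcal{F},h)$. If that upgrade fails, I would fall back on perturbing $u$ inside $\goodvalues^\circ$, which is dense, and passing to a limit.
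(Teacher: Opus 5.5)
Your route is the paper's: first get from a box-boundary point of the fibre into $S_{(0,1)}^{(r,m)}(\mathcal{F},u,h)$ along a feasible path, then exploit the logarithmic blow-up of $I_0'$. Your Step~2 is correct. It is also sharper than the paper's mean-value argument, because you require a path inside the fibre that moves the boundary entries inward at a linear rate, which is exactly what lets $s\log s$ beat the $O(s)$ terms.

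The gap is Step~1, and it is the whole difficulty. When no inward $v\in\ker J$ exists, the Farkas-type alternative gives $y$ and $\lambda\ge 0$, $\lambda\neq 0$, with $y^\top J=\sum_{e\in B}\lambda_e\sigma_e\,d\sfmat_e$, where $\sigma_e=\pm1$ are the inward signs. This is only a first-order (KKT) condition for $y^\top t(\mathcal{F},\cdot,h)$ on the box, and it fails to give a contradiction for two reasons:
\begin{itemize}
\item It does not make $y^\top t$ locally extremal. If the second-order term along the face is indefinite, the fibre still enters the open box, but only tangentially (e.g.\ $\sfmat_{11;1}\sim s^2$). Your $s\log s$ versus $O(s)$ comparison is then unavailable.
\item Even a genuine local extremum says nothing about $\partial T^{(m,r)}(\mathcal{F},h)$. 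That is the boundary of the image of the whole box, and other parts of the box can map to both sides of $u$.
\end{itemize}
Your fallbacks do not address this. Theorem~\ref{thm:denseinterior} is about density of interiors of images, not the geometry of one fibre at a face. Perturbing $u$ and passing to a limit is exactly where interior points can converge to a box-boundary point.

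In fact $u\in\goodvalues$ cannot close Step~1, since it only constrains the unconstrained map on $\realpodalfunctionspacel{m}$ and the global image. With regularity taken as in Definition~\ref{def:regular_critical_points}, i.e.\ via the full differential in $(\sfmat,\pi)$, the statement as written can fail. Take $r=1$, $\mathcal{F}=\{K_2\}$, $m=2$, and an analytic $h$ with $h'(0)>0$ and nondegenerate local minima at points $a\neq b$ in $(0,1)$, with $h(a)<\min\{h(0),h(b)\}$. Consider the point $\sfmat_{11}=0$, $\sfmat_{12}=a$, $\sfmat_{22}=b$, $\pi_1=\frac{h(b)-h(a)}{h(0)+h(b)-2h(a)}$. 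There the gradient of $t$ in $(\sfmat_{12},\sfmat_{22},\pi_1)$ vanishes, the Hessian in those variables is diagonal and positive definite, and $\partial_{\sfmat_{11}}t>0$. So $t$ has a strict local minimum on the box with $\nabla t\neq 0$. Near this point the fibre meets the box only at the point itself, which is therefore vacuously a local minimum of $I$. Yet $u$ is interior to every $T^{(m',r)}=[\min_{[0,1]}h,\max_{[0,1]}h]$. For generic such $h$ (e.g.\ a polynomial) it also avoids the countably many critical values.

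What is missing is transversality of the fibre to the faces of the box. You need $u$ to be a regular value of $t(\mathcal{F},\cdot,h)$ restricted to each face $\{\sfmat_e=\epsilon_e,\ e\in B\}$, with $\pi$ free. There are finitely many faces, so by Sard this holds for almost every $u$, but it is an extra hypothesis beyond $\goodvalues$. Given it, the fibre never meets faces of dimension $<|\mathcal{F}|$. On the other faces the Jacobian in the free coordinates is onto, so you can prescribe $v_B$ with the inward signs and solve for the rest; your Step~2 then finishes the proof. Work in the full fibre, letting $\pi$ move, rather than the fixed-$\pi$ slice. Sard gives face-regularity for the full map, not slice by slice, and moving $\pi$ only adds $O(s)$ because $I_0$ is bounded on $[0,1]$.

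The paper's proof bridges the same step by asserting that $(\sfmat,\pi)\in\partial\podalfunctionspacel{m}$ forces $t(\mathcal{F},V,h)$ to meet $\partial T^{(m,r)}(\mathcal{F},h)$. That is the same local-versus-global leap, so the missing ingredient is not supplied there either.
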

\begin{proof}

\begin{figure}
    \centering
    \begin{tikzpicture}[scale=1.0, >={Stealth}]
      \draw[thick] (-1.5,-0.8) -- (0.5,1.9);
      \node[above right, font=\small] at (0.3,1.75) {$\partial\mathcal{W}^{(m,r)}$};
      \draw[thin] (-2.55,1.8) ++(280:2.4) arc (280:380:2.4);
      \node[right, font=\small] at (-1.35,2.52)        {$S^{(m,r)}(\mathcal{F},u,h)$};
      \filldraw (-0.6,0.35) circle (2pt);
      \node[left, font=\small] at (-0.72,0.35) {$(\sfmat,\pi)$};
      \draw[thin] (-0.6,0.35) ++(233.5:0.4) arc (233.5:420:0.4);
      \node[below, font=\small] at (-0.44,-0.22) {$V$};
      \draw[thick] (4.5,-0.8) -- (6.5,1.9);
      \node[above right, font=\small] at (6.3,1.75)
        {$\partial T^{(m,r)}(\mathcal{F},h)$};
      \filldraw (5.4,0.35) circle (2pt);
      \node[right, font=\small] at (5.46,0.35) {$u$};
      \draw[thin] (5.4,0.35) ++(233.5:0.4) arc (233.5:420:0.4);
      \node[below, font=\small] at (5.87,0)
        {$t(\mathcal{F},V,h)$};
      \draw[->, thick] (-0.15,0.65) to[out=20,in=160] (5.1,0.65);
      \node[above, font=\small] at (2.5,1.15) {$t(\mathcal{F},\cdot,h)$};
    \end{tikzpicture}
    \caption{Since  $(\sfmat,\pi) \in \partial \podalfunctionspacel{m} \cap \feasibleregionml{m} $,
    $t(\mathcal{F},(\sfmat,\pi),h) \in \partial T^{(m,r)}(\mathcal{F},h)$}
    \label{fig:randomsolutions}
\end{figure}
We first show that if $(\sfmat,\pi) \in \partial \podalfunctionspacel{m} \cap S^{(r,m)}(\mathcal{F}, u,h) $ and $u \in \marginalpolytopemh{m} \cap \Omega(\mathcal{F})$, then every open neighborhood $U$ of $(\sfmat,\pi)$ in $\realfeasibleregionml{m}$ meets $\randfeasibleregionml{m}$.

By contradiction, suppose some such $U$ has $U \cap \randfeasibleregionml{m} = \emptyset$. Then there is a neighborhood $V$ of $(\sfmat,\pi)$ with non-empty interior in $\podalfunctionspacel{m}$ such that $V \cap \randfeasibleregionml{m} = \emptyset$, as illustrated in Figure \ref{fig:randomsolutions} for the quotient space. Since $\mathcal{F}$ is a set of independent graphs, the image $t(\mathcal{F}, \cdot,h):\podalfunctionspacel{m} \to \mathbb{R}^{|\mathcal{F}|}$ of any open set has non-empty interior. Now $t(\mathcal{F},V,h) \cap \partial \marginalpolytopemh{m} \neq \emptyset$, since $(\sfmat,\pi) \in \partial \podalfunctionspacel{m}$; letting $V$ shrink, we conclude $t(\mathcal{F},(\sfmat,\pi),h) \in \partial \marginalpolytopemh{m}$ -- contradicting $u \notin \partial \marginalpolytopemh{m}$. So every open neighborhood $U$ of $(\sfmat,\pi)$ in $\realfeasibleregionml{m}$ does meet $\randfeasibleregionml{m}$.

Without loss of generality, take $(\sfmat,\pi) \in \partial \podalfunctionspacel{m}$ with exactly one zero entry, $\sfmat_{1,1;1} = 0$. We have just shown a neighborhood $U$ in $\feasibleregionml{m}$ exists with $\randfeasibleregionml{m} \cap U \neq \emptyset$; take a step function $(C,\pi) \in \randfeasibleregionml{m} \cap U$ and a rectifiable path $\gamma$ from $(\sfmat,\pi)$ to $(C,\pi)$, i.e.\ $\gamma(0)=(\sfmat,\pi)$ and $\gamma(1)=(C,\pi)$. It suffices to show the line integral

\begin{equation}
\label{ineqEntropy}
I((\sfmat,\pi)) - I(\gamma(\epsilon))=\int_0^\epsilon dI(\gamma(s);\gamma'(s)) ds
\end{equation}
is negative for some $\epsilon > 0$, where $dI(W;V)$ is the Gateaux derivative of $I(\cdot)$ at $W$ in the direction $V=\gamma'$,
\begin{equation*}
    dI(W;V) = \lim_{\lambda \to 0} \frac{I(W+\lambda V)-I(W)}{\lambda}
\end{equation*}
which works out to
\begin{equation*}
    dI(W;V) = \sum_{k=1}^r   \int_{[0,1]^2} V_k(x_1, x_2) I'_0(W_k(x_1, x_2 ))  dx_1 dx_2
\end{equation*}
where $I'_0(u) = \log \left(\frac{u}{1-u} \right)$. The key feature of $I_0'$ is its behavior at the boundary:
\begin{itemize}
\item for fixed $u \in (0,1)$, $I'_0(u)$ is bounded,
\item $\lim_{u \to 0} I'_0(u)=-\infty$,
\item $\lim_{u \to 1} I'_0(u)=+\infty$.
\end{itemize}
The definite integral \eqref{ineqEntropy} is therefore
\begin{eqnarray*}
I((\sfmat,\pi)) &-& I(\gamma(\epsilon)) =\int_0^\epsilon \left\lbrace \sum_{\tiny \begin{array}{c}
    (k \neq 1, i_1, i_2=1) \mbox{ or }  \\
     (k=1, i_1 \neq 1, i_2 \neq 1)
\end{array} }^m I'_0(\gamma(s)_{i_1, i_2; k }) \gamma'(s)_{i_1, i_2;k  } \pi_{i_1} \pi_{i_{2}}  \right. \\ &&  \left. + I'_0(\gamma(s)_{1,1;1}) \gamma'(s)_{1,1;1} \pi_{1}^2  \right\rbrace ds
\end{eqnarray*}
and the Mean Value Theorem gives
\begin{eqnarray*}
I((\sfmat,\pi)) - I(\gamma(\epsilon)) =
 \epsilon \left\lbrace \sum_{\tiny \begin{array}{c}
    i_1 \cdots i_{d_{1}}=1  \\
     i_1 \neq 1, \cdots i_{d_1} \neq 1
\end{array}  }^m I'_0(\gamma(\alpha)_{i_1, i_2;k}) \gamma'(\alpha)_{i_1,i_2 ;k } \pi_{i_1}\pi_{i_2}   \right. \\   \left. + I'_0(\gamma(\alpha)_{1, 1;1}) \gamma'(\alpha)_{1,1;1} \pi_{1}^2  \right\rbrace ds
\end{eqnarray*}
for some $\alpha \in (0, \epsilon)$. The sum over all $1 \leq i_1,i_2  \leq m$ and $k\in[r]$ other than $(1,1;1)$ stays bounded, while the remaining term is negatively unbounded as $\epsilon \to 0^+$ and hence $\alpha \to 0^+$. So there exists $\alpha >0$ with
$I((\sfmat,\pi)) - I(\gamma(\alpha)) <0$.
\end{proof}
With every local minimum confined to the interior, POD adapts directly to the entropy functional.
\begin{restatable}{theorem}{thmradin}
\label{thm:radin}
 Let $\mathcal{F}=\{\mathcal{F}_1, \cdots, \mathcal{F}_n\}$  be an ordered set of quantum graphs and let $h$ be a matrix $|\mathcal{F}| \times r$ of analytical functions such that $\{t(\mathcal{F} \cup \{E_s\}, \cdot, [h | [I_0]] )\}$ is a set of independent analytic functions where $[h | [I_0]]$ is the matrix obtained from $h$ by adjoint at the bottom the row vector $[I_0]$. Let $u \in \goodvalues$ and
  \begin{equation*}
    \optimalsolutionmf{m}{I,h}  = \argmin_{W \in \feasibleregionml{m} } I(W).
\end{equation*}
Let $m_1 = m_1(\mathcal{F},u,h)$, then we have
\[
\optimalsolutionmf{m_1}{I,h}  \subseteq W^{(r)}(\mathcal{F},u,I,h).
\]
Moreover if all global minima of $I$ are isolated in $\feasibleregion$ then
\[
\optimalsolutionmf{m_1}{I,h}=W^{(r)}(\mathcal{F},u,I,h).
\]
\end{restatable}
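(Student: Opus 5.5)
The plan is to reduce Theorem~\ref{thm:radin} to Theorem~\ref{thm:POD} with $f_0 = I_0$, after removing the box constraints $0 \le W \le 1$. The only obstruction to a direct application is that $I_0$ is analytic only on $(0,1)$. The feasible regions $\feasibleregionml{m}$ therefore carry boundary pieces $\partial\podalfunctionspacel{m}$, where neither the manifold structure nor the Hessian of $I$ is available.

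\textbf{Step 1: interior minima at finite size.} Fix $m \ge m_1 \ge m_0(\mathcal{F},u,h)$. By Lemma~\ref{lem:randomsolutions}, every local minimum of $I$ on $\feasibleregionml{m}$ lies in $\randfeasibleregionml{m}$. This applies in particular to every element of $\optimalsolutionmf{m}{I,h}$. On the open set of step functions with entries in $(0,1)$, $I_0$ is real analytic. So locally near any such minimizer, the problem coincides with the real-valued problem of Theorem~\ref{thm:POD} with $f_d = I$. The independence hypothesis on $\{t(\mathcal{F}\cup\{E_s\},\cdot,[h|[I_0]])\}$ is exactly the hypothesis of Theorem~\ref{thm:POD}, read on the open domain $(0,1)$. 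Analytic independence only needs full rank at one point, and Theorem~\ref{thm:denseinterior} is local.

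\textbf{Step 2: running the POD argument on $(0,1)$-valued functions.}
\begin{itemize}
\item Check that Theorem~\ref{thm:numbercomponents} and Corollary~\ref{cor:nonessentiallocal} go through when the ambient step-function space is replaced by its open subset with entries in $(0,1)$. Their proofs use only the smoothness, rank and exponential-map arguments, which are local, together with the non-extremality $u \in \goodvalues$.
\item Conclude that no step function of size $m > m_1$ beats $\optimalsolutionmf{m_1}{I,h}$. Boundary competitors are excluded by Step~1: a minimizer of size $m$ touching $\{0,1\}$ cannot be a local minimum.
\item Pass from step functions to arbitrary graphons in $S^{(r)}(\mathcal{F},u,h)$ exactly as in the proof of Theorem~\ref{thm:POD}. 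Here Lemma~\ref{lem:continuity}(2) provides $L^1$-continuity of $I$ at graphons with $\|W\|_\infty<1$, and step functions are $L^1$-dense.
\item Use the invariance of $I$ and $t(\mathcal{F},\cdot,h)$ under $\sigma \in \Sigma$ to transfer the conclusion to the unlabeled space $\graphonspaceu$. This yields $\optimalsolutionmf{m_1}{I,h} \subseteq W^{(r)}(\mathcal{F},u,I,h)$.
\end{itemize}

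\textbf{Step 3: equality under isolation.} Assume all global minima are isolated in $\feasibleregion$. Corollary~\ref{cor:isolatedunlabeled} and Remark~\ref{rem:I0stronglyconvex} show that $I_0$ is globally strongly convex with modulus $4$, so isolation in cut distance is the natural notion here. Isolation in $\delta_\Box$ implies isolation in $L^1$, because $\|\cdot\|_\Box \le \|\cdot\|_1$. That is the hypothesis needed in the second half of Theorem~\ref{thm:POD}. Its level-set deformation argument then rules out a non-step $W_\infty$ with $I(W_\infty)=I(W_{m_1})$, giving equality.

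\textbf{Main obstacle.} The delicate step is approximating a possibly non-step global minimizer $W_1$ by step functions. Such a $W_1$ may take values arbitrarily close to $0$ or $1$, where Lemma~\ref{lem:continuity} gives no uniform Lipschitz constant for $I$. I would handle this in two parts.
\begin{itemize}
\item First, show by a continuum analogue of Lemma~\ref{lem:randomsolutions} that a global minimizer cannot sit on a positive-measure set where $W_1 \in \{0,1\}$. The argument is that $I_0'$ blows up there while the constraint derivatives stay bounded, and $u$ is non-extremal.
\item Second, use convexity of $I_0$ and Jensen's inequality: block-averaging $W_1$ over a fine partition does not increase $I$. Then correct the constraint values back to $u$ with a small perturbation inside the regular manifold $\realfeasibleregionmlh{m}{h}$, which costs $o(1)$ in $I$.
\end{itemize}
This produces a feasible step function whose value is within any $\epsilon$ of $I(W_1)$, which is what Step~2 requires.
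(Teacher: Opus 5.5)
Your outline follows the paper's own proof: Lemma~\ref{lem:randomsolutions} to push minimizers into $(0,1)$, Theorem~\ref{thm:POD} rerun with $f_0=I_0$, invariance under $\Sigma$, and $\delta_\Box\le\|\cdot\|_\Box\le\|\cdot\|_1$ for the equality. The step that fails is the isolation transfer in your Step~3, and the paper's Step~4 fails for the same reason. That inequality only places the punctured $L^1$-ball around $V^*=(\sfmat^*,\pi^*)$ inside the $\delta_\Box$-ball around $[V^*]$, not inside the punctured one. What is left over is the set of $W\neq V^*$ with $\delta_\Box([W],[V^*])=0$, about which cut-distance isolation says nothing. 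Whenever $V^*$ is not constant, this set meets every $L^1$-neighborhood of $V^*$. To see this, take two blocks $S_i,S_j$ carrying different rows of $\sfmat^*$, and let $\sigma\in\Sigma$ swap a subset of $S_i$ of measure $\epsilon$ with one of $S_j$. Then $V^{*\sigma}$ is feasible, $I(V^{*\sigma})=I(V^*)$, and $0<\|V^{*\sigma}-V^*\|_1=O(\epsilon)$. So no non-constant step function is ever an isolated local minimum of $I$ on the labeled feasible set with the $L^1$ topology, in the sense of Definition~\ref{def:isolatedlocalmin}. The hypothesis of the second half of Theorem~\ref{thm:POD} therefore cannot be obtained this way. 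For the equality you need a version of that second half that requires isolation only modulo relabeling, i.e.\ off the set $\{W:\delta_\Box([W],[V^*])=0\}$, and its proof must be rechecked under that weaker hypothesis.

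You are right that approximating a non-step minimizer $W_1$ is the crux of the inclusion, and the paper's proof of Theorem~\ref{thm:POD} passes over it: $L^1$-density of step functions does not give density inside the constraint set. But the obstacle is not a missing Lipschitz constant. $I_0$ is uniformly continuous on $[0,1]$ with a concave modulus $\omega(t)=O(t\log(1/t))$, so Jensen gives $|I(W)-I(V)|\le r\,\omega(\|W-V\|_1)$ for all $[0,1]$-valued $W,V$; values near $0$ or $1$ cost nothing. The real gap is restoring $t(\mathcal{F},\cdot,h)=u$ after block averaging. A ``small perturbation inside $\realfeasibleregionmlh{m}{h}$'' presupposes a point of that fiber close to the block average, which is exactly what must be shown, and regularity of the points of the fiber does not supply one. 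You need a right inverse of the linearized constraints near the block averages, with bounds uniform in $m$. That would essentially follow from linear independence of the first-variation kernels of the $t(\mathcal{F}_i,\cdot,h)$ at the non-step graphon $W_1$. However, $u\in\goodvalues$ only concerns critical values of the step-function maps, so this constraint qualification at $W_1$ is an extra ingredient you must supply. Your continuum analogue of Lemma~\ref{lem:randomsolutions} needs it as well, because exploiting the blow-up of $I_0'$ requires feasible compensating perturbations at $W_1$. Finally, Theorem~\ref{thm:numbercomponents} is a global statement about connected components, and an open subset of a connected fiber can be disconnected. Its transfer to $(0,1)$-valued step functions is therefore not automatic ``because the arguments are local'', even though the paper makes the same substitution.
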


\begin{proof}
\textbf{Step 1 (reduction to Theorem \ref{thm:POD} via Lemma \ref{lem:randomsolutions}).} Let $m \geq m_0(\mathcal{F},u,h)$. Since $u \in \goodvalues$, Lemma \ref{lem:randomsolutions} shows that every local minimum of $I$ in $\feasibleregionml{m}$ lies in $\randfeasibleregionml{m}$, i.e.\ strictly in $(0,1)$, where $I_0(x)=x\log(x)+(1-x)\log(1-x)$ is analytic (its only singularities are at $\{0,1\}$). Consequently $I=t(E_s,\cdot,[I_0])$ restricted to $\randfeasibleregionml{m}$ is exactly a density function of the type $f_d$ to which Theorem \ref{thm:POD} applies, with $f_0=I_0$; every result that theorem's proof depends on (Lemma \ref{lem:isolatedminimum}, Theorem \ref{thm:margincriterion}, Theorem \ref{thm:isolatedgraphon}, Theorem \ref{thm:numbercomponents}, Corollary \ref{cor:nonessentiallocal}) only ever evaluates $f_0,h_k$ and their derivatives at points of these step-function spaces, hence applies verbatim under the substitution $f_d \mapsto I$, $f_0 \mapsto I_0$, $\realfeasibleregionmlh{m}{h} \mapsto \randfeasibleregionml{m}$, and $\realfeasibleregionlh{h} \mapsto S_{(0,1)}^{(r)}(\mathcal{F},u,h) := \{W \in \graphonspacel_{(0,1)} : t(\mathcal{F},W,h)=u\}$ -- the excluded boundary $\partial\graphonspacel_{[0,1)}\setminus\graphonspacel_{(0,1)}$ is never visited by the local minima under analysis. Under this substitution, the independence hypothesis of Theorem \ref{thm:POD} is exactly the hypothesis $\{t(\mathcal{F}\cup\{E_s\},\cdot,[h|[I_0]])\}$ independent, analytic, assumed in Theorem \ref{thm:radin}.

\textbf{Step 2 (global minimum in the labeled space).} Applying the first part of Theorem \ref{thm:POD} under the substitution of Step 1 gives that $(\sfmat^*,\pi^*) \in \optimalsolutionmf{m_1}{I,h}$ is a global minimum of $I$ over the full labeled feasible set $S_{(0,1)}^{(r)}(\mathcal{F},u,h)$.

\textbf{Step 3 (transfer to $\graphonspaceu$).} Both $I$ and $t(\mathcal{F},\cdot,h)$ are invariant under the relabeling action of $\Sigma$: a measure-preserving change of variables leaves $\int I_0(W_k(x,y))\,dxdy$ and $t(F,W,h)$ unchanged for every constituent graph $F$. Hence $I$ and the constraint $t(\mathcal{F},\cdot,h)=u$ both descend to well-defined functions on the quotient $\graphonspaceu=\graphonspacel/\sim$, and the quotient map restricts to a surjection of $S_{(0,1)}^{(r)}(\mathcal{F},u,h)$ onto the feasible set $S^{(r)}(\mathcal{F},u,h) \subset \graphonspaceu$ used in the definition of $W^{(r)}(\mathcal{F},u,I,h)$, with $I([W])=I(W)$ for every representative $W$. Consequently
\begin{equation*}
\min_{[W] \in S^{(r)}(\mathcal{F},u,h)} I([W]) \;=\; \min_{W \in S_{(0,1)}^{(r)}(\mathcal{F},u,h)} I(W),
\end{equation*}
and $[(\sfmat^*,\pi^*)]$ achieves it by Step 2, proving $\optimalsolutionmf{m_1}{I,h} \subseteq W^{(r)}(\mathcal{F},u,I,h)$.

\textbf{Step 4 (the isolated case).} Suppose now that all global minima of $I$ are isolated in $\feasibleregion$, i.e.\ with respect to the cut-distance $\delta_\Box$ on $\graphonspaceu$. For any $W,V \in \graphonspacel$, taking $\sigma=\mathrm{id}$ in the infimum defining $\delta_\Box$ and using $\|\cdot\|_\Box \leq \|\cdot\|_1$ gives $\delta_\Box([W],[V]) \leq \|W-V\|_\Box \leq \|W-V\|_1$. Hence, for each global minimum $(\sfmat^*,\pi^*)$, the witnessing radius $\rho>0$ of its isolation in $(\feasibleregion,\delta_\Box)$ also witnesses its isolation in $(S_{(0,1)}^{(r)}(\mathcal{F},u,h), \|\cdot\|_1)$ with the same radius $\rho$. This is exactly the isolation hypothesis required by the second part of Theorem \ref{thm:POD} under the substitution of Step 1, which therefore gives $\optimalsolutionmf{m_1}{I,h} = \argmin_{W \in S_{(0,1)}^{(r)}(\mathcal{F},u,h)} I(W)$ in the labeled space. By Step 3 this descends to $\optimalsolutionmf{m_1}{I,h}=W^{(r)}(\mathcal{F},u,I,h)$.
\end{proof}


\begin{remark}
Theorem \ref{thm:radin} takes isolation of the global minima in $(\feasibleregion,\delta_\Box)$ as a hypothesis. Corollary \ref{cor:isolatedunlabeled} (together with Theorem \ref{thm:isolatedcutnorm} and Remark \ref{rem:I0stronglyconvex}, which confirms $I_0$ is globally strongly convex with $m=4$) gives a checkable sufficient condition for this hypothesis: the margin criterion \eqref{eq:margincriterion} of Theorem \ref{thm:margincriterion} at $(\sfmat^*,\pi^*)$, together with positive definiteness of the Lagrangian Hessian of Lemma \ref{lem:isolatedminimum}.
\end{remark}

Theorem \ref{thm:radin} together with Lemma \ref{lem:randomsolutions} and Corollary \ref{cor:isolatedunlabeled} has an immediate computational payoff. Since the solutions of $W^{(r)}(\mathcal{F},u,I,h)$ lie in $\randfeasibleregionml{m_1(\mathcal{F},u,h)}$, they lie in a region of $\realpodalfunctionspacel{m_1}$ bounded away from the boundary by the constraints $\{\epsilon \leq \sfmat_{ij } \leq 1-\epsilon\}_{i,j=1}^{m_1}$ for some $\epsilon>0$; and since there is a computable method to check whether the solutions are isolated in graphon space and since the solutions in $\randfeasibleregionml{m_1(\mathcal{F},u,h)}$ are isolated, there can only be finitely many of them in $W^{(r)}(\mathcal{F},u,I,h)$. This gives the following corollary.
\begin{corollary}
\label{cor:computable}
Let $\mathcal{F}=\{\mathcal{F}_1, \cdots, \mathcal{F}_n\}$ be an ordered set of quantum graphs and $h=\{h_1, \cdots, h_n\}$ a matrix of analytic functions such that $\{t(\mathcal{F} \cup \{E_s\}, \cdot, [h|[I_0]])\}$ is a set of independent analytic functions. Let $u \in \goodvalues$, $m_1 = m_1(\mathcal{F},u,h)$, and $(\sfmat,\pi) \in \optimalsolutionm{m_1}$.
If every $(\sfmat,\pi) \in \optimalsolutionm{m_1}$ is a strict local minimum of $I$ in $\feasibleregion$, then the solutions $W^{(r)}(\mathcal{F},u,I,h)$ are computable.
\end{corollary}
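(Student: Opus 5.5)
The plan is to reduce computability of $W^{(r)}(\mathcal{F},u,I,h)$ to a finite-dimensional semialgebraic-type search that can be carried out to any prescribed precision. By Theorem \ref{thm:radin} (second part), the isolation hypothesis gives $\optimalsolutionmf{m_1}{I,h}=W^{(r)}(\mathcal{F},u,I,h)$. It therefore suffices to show two things: that $m_1=m_1(\mathcal{F},u,h)$ is computable, and that the global minimizers of $I$ on $\feasibleregionml{m_1}$ can be computed. We would take computability in the standard sense of computable analysis, namely approximation to arbitrary precision of the finitely many block parameters $(\sfmat,\pi)$.

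\textbf{Step 1: computing $m_1$.} For increasing $m$, check $r\frac{m(m+1)}{2}>|\mathcal{F}|+1$ and test $u\in\marginalpolytopemh{m}^\circ$. Membership in the interior is semi-decidable: exhibit a point $(\sfmat,\pi)$ with $\pi\in(\convexcombm{m})^\circ$, $t(\mathcal{F},(\sfmat,\pi),h)=u$ and $J_{(\sfmat,\pi)}(\mathcal{F},h)$ of full rank. Since $u\in\goodvalues$ is a regular value, every preimage is of this kind, and by Theorem \ref{thm:smoothsubmesion} the image of a neighbourhood of such a point is open. A dovetailed search over rational approximations, certified with interval arithmetic and a Newton/Kantorovich argument for the analytic map $t(\mathcal{F},\cdot,h)$, finds a witness whenever one exists. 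Since $m_1$ is known to be finite, the search halts at the first $m$ for which a witness is found. The subtle point is that failure at smaller $m$ must also be decided. Here we would use that $u\notin\cup_{m'}\partial T^{(m',r)}(\mathcal{F},h)$, so for each $m$ either $u$ lies in the open interior or $u$ is at positive distance from the closed polytope. Both alternatives are semi-decidable in parallel, the second via a compactness argument on the parameter box.

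\textbf{Step 2: a compact search region.} By Lemma \ref{lem:randomsolutions}, every minimizer lies in $\randfeasibleregionml{m_1}$. We then argue, as in the text preceding the corollary, that there is $\epsilon>0$ with $\epsilon\le\sfmat_{ij;k}\le1-\epsilon$ at every minimizer. Concretely, we would make $\epsilon$ effective by repeating the boundary-gradient estimate of that lemma quantitatively, using that $I_0'(x)\to\mp\infty$. The search then takes place over the compact set $K_\epsilon=\{(\sfmat,\pi):\epsilon\le\sfmat\le1-\epsilon,\ \pi\in\convexcombm{m_1}\}$. On $K_\epsilon$ the functions $I$ and $t(\mathcal{F},\cdot,h)$ are analytic with computable moduli of continuity.

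\textbf{Step 3: locating and certifying the minima.} Subdivide $K_\epsilon$ into boxes and, using interval enclosures, discard boxes that miss the constraint $t=u$ or whose lower bound for $I$ exceeds the current best upper bound. The surviving boxes shrink to the set of global minimizers. Isolation, together with compactness, yields finitely many minimizers, and each is certified by the Lagrangian second-order test of Lemma \ref{lem:isolatedminimum} and the margin criterion of Theorem \ref{thm:margincriterion}; both of these are finite checks on computed quantities. Refining the boxes then approximates each minimizer to any precision.

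\textbf{Main obstacle.} We expect the hardest part to be Step 2 combined with the termination part of Step 3. Strict minimality gives no a priori quantitative separation between global minima and near-optimal competitors, so the branch-and-bound procedure converges but cannot by itself certify that no box is missed. We would first try to invoke the strict second-order condition to obtain a uniform quadratic growth radius around each certified minimizer. The remaining region is then compact and contains no minimizer, so it carries a positive gap in $I$, and that gap can eventually be detected by the interval bounds.
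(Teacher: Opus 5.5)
Your outline matches the paper's own justification, which is only the paragraph preceding the corollary. There, Theorem~\ref{thm:radin} places the solutions in $\optimalsolutionm{m_1}$, Lemma~\ref{lem:randomsolutions} and compactness keep them $\epsilon$-away from the boundary, and isolation with compactness makes them finitely many; computability is then asserted without an algorithm. Your Steps~1--3 try to supply that algorithm, and Step~3 fails as written, for two reasons.

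\textbf{Finiteness fails in parameter space.} The hypothesis is isolation in the unlabeled space $\feasibleregion$. This does not give finitely many minimizers in the parameter space of $\realfeasibleregionmlh{m_1}{h}$. A solution with $\#((\sfmat,\pi))<m_1$ has a whole curve of representations $\lambda\mapsto\theta((\sfmat,\pi),\lambda,l)$ on which $I$ and $t(\mathcal{F},\cdot,h)$ are constant. The Lagrangian Hessian is therefore zero along the tangent of this curve, and the test of Lemma~\ref{lem:isolatedminimum} necessarily fails. Concretely, take $r=1$, $\mathcal{F}$ the single edge, $h=\mathrm{id}$ and $u=p\in(0,1)$; then $m_1=2$. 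The unique solution is the constant graphon $p$, a strict minimizer in $\feasibleregion$ by strong convexity of $I_0$ and $\int(W-p)=0$, so the corollary's hypothesis holds. Yet every point of the segment $\{\sfmat\equiv p,\ \pi\in(\convexcombm{2})^\circ\}$ in $\realfeasibleregionmlh{2}{h}$ is a minimizer.

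\textbf{Strictness is not nondegeneracy.} Even after quotienting out block permutations and refinements, a strict local minimum need not have a positive definite Lagrangian Hessian (think of $x^4$). So neither the second-order test nor the margin criterion of Theorem~\ref{thm:margincriterion} is guaranteed to succeed; the latter only certifies isolation in the full graphon space, which is already assumed. The quadratic-growth radius in your last paragraph is likewise not available from the hypothesis. Step~2's effective $\epsilon$, on the other hand, is unnecessary: $I_0$ extends continuously to $[0,1]$, so interval enclosures work on the whole box.

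\textbf{Your fix does not remove the obstacle you flag.} Suppose every surviving cluster were certified to contain a nondegenerate local minimizer and the complement carried a positive gap. You would still have to decide which of these local minimizers are global, i.e.\ whether their $I$-values equal $\min I$, and equality of computable reals is undecidable. A local minimizer whose value exceeds $\min I$ by an arbitrarily small amount can survive arbitrarily many stages of the branch-and-bound, and cannot be told apart from a solution at any finite stage. Your gap argument controls only the region away from the candidate clusters, not such a spurious candidate. The procedure therefore gives outer approximations converging to the solution set with no computable error bound; it only shows the solution set is co-c.e.\ closed.

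\textbf{What your ingredients do support} is a non-uniform statement. First, $\min I$ is computable: the feasible set is a co-c.e.\ closed subset of a compact box, and regularity of $u$ lets Newton--Kantorovich certify feasible points, giving upper bounds. Second, an isolated point of a co-c.e.\ closed subset of a compact box is computable. Hence each solution, taken in its minimal representation in the step-function space of its own size, is a computable point. A uniform algorithm returning the whole solution set needs extra information, such as uniqueness, the number of solutions, or a gap between the global and the second-best local minimum. Neither your proposal nor the paper's one-paragraph argument provides it.
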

\section{ Concluding Remarks and Open Problems }
\label{sec:conclusions}

That $W^{(r)}(\mathcal{F},u,I,h)$ is a step function carries a striking structural consequence: the most typical infinite, undirected network is built from infinitely many redundant copies of a finite set of vertex types. Such redundancy is not unprecedented in complex networks -- Bianconi et al.\ \cite{bianconi2001bose} found a Bose--Einstein condensate on infinite networks under different assumptions -- and here it can be read as a hidden ``data pattern'' inside the network. That pattern is not necessarily easy to recover from a real network modeled by $W^{(r)}(\mathcal{F},u,I,h)$, however, since every vertex permutation of $W^{(r)}(\mathcal{F},u,I,h)$ is an equally valid solution; recovering the original partition requires clustering the vertices first.

\subsection{$POD$ and extremal graph problems}

It is tempting to reach for Theorem~\ref{thm:POD}, the Principle of Density functions, when attacking extremal graph problems -- but the temptation should be resisted. POD relies on the constraint region being a smooth manifold invariant under refinement, whereas extremal graph problems make the constraint $0 \leq W \leq 1$ active, and that constraint is precisely the one refinement does \emph{not} preserve.

\subsection{Open Questions and Future Directions}

This work opens several natural questions; we highlight those we find most compelling:

\begin{enumerate}
    \item \textbf{Existence of non-step-function solutions}: Are there solutions in $\optimalsolution$ other than step functions when $u \in \goodvalues$? Such solutions would be counter-intuitive: the Szemer\'edi regularity lemma guarantees that any convergent sequence of finite multi-graphs satisfying $(\mathcal{F},u)$ eventually looks like a step function of fixed size, so one expects the limit graphon itself to be a step function.

    \item \textbf{Computational Complexity}: What is the complexity of computing $\optimalsolution$ for given constraints? Can we design efficient algorithms that recover every global minimum $\optimalsolution$?

    \item \textbf{Extremal Regions}: How does the number of blocks grow as $u$ approaches the boundary $\partial \marginalpolytopeh$? Kenyon et al.\ \cite{kenyon2017phases} showed it can be unbounded -- can the growth rate be characterized? Relatedly, can we characterize the growth rate of  $1-\frac{\nu(T^{(m+1,r)}(\mathcal{F},h))}{\nu( \marginalpolytopemh{m})} \to 0$ as $m \to \infty$?

    \item \textbf{Uniqueness}: For which $(\mathcal{F}, u)$ does $\optimalsolution$ have a unique solution? This connects to the finite forcibility problem, recently shown to be false in general \cite{grzesik2020elusive}.
    \item \textbf{The inverse problem}: Given an observed multi-relational network $G$, one can estimate the underlying graphon $W_G$ using stochastic block model estimation methods such as \cite{gao2015rate,xu2018rates,peng2022empirical}, and then ask for $(\mathcal{F},u)$ with $\|W_G -  \optimalsolution\|_1 < \epsilon$ for some $\epsilon>0$ -- can this inverse problem be solved efficiently?

    \item \textbf{Multi-relational phase structure}: In the single-relation edge-triangle model, Kenyon et al.\ \cite{kenyon2017phases} identified infinitely many distinct phases separated by qualitative changes in the block structure of $\optimalsolution$. Can an analogous phase diagram be constructed for multi-relational graphons with $r \geq 2$ relation types? Do the couplings between relation types give rise to new phase phenomena absent in the single-relation case?

\end{enumerate}

\section*{Acknowledgments}
The authors would like to thank Marco Zambon, who has had the patience and kindness to review this paper countless times. This work is partially supported by  ERC-StG 240186 MiGraNT: Mining Graphs and Networks, a theory-based approach, and SENESCYT of Ecuador.

\bibliographystyle{plain}
\bibliography{references}

\end{document}